\newcommand{\argmin}[1]{\underset{#1}{\mathrm{argmin\,}}}
\newcommand{\ba}{\begin{array}}
	\newcommand{\ea}{\end{array}}
\newcommand{\diff}{{\rm\,d}}
\newcommand{\equaref}[1]{(\ref{eq:#1})}
\newtheorem{corollary}{Corollary}
\newtheorem{theorem}{Theorem}
\newtheorem{proposition}{Proposition}
\newtheorem{remark}{Remark}
\DeclareMathOperator{\EX}{\mathbb{E}}
\theoremstyle{definition}
\newtheorem*{vpolicy1}{MVF Policy}
\newtheorem*{vpolicy2}{MSF Policy}
\newtheorem*{cpolicy1}{Rate Control}
\newtheorem*{cpolicy2}{HT Control}
\newtheorem{ass}{Assumption}
\renewcommand{\eqref}[1]{Eq.~(\ref{#1})}
\definecolor{darkcyan}{rgb}{0.0, 0.55, 0.55}
\definecolor{internationalorange}{rgb}{1.0, 0.31, 0.0}
\begin{document}
	\author{Franco Galante,  Chiara Ravazzi,  Michele Garetto, Emilio Leonardi\vspace{-0.5cm}\thanks{F. Galante is with the Politecnico di Torino, Department of Electronics and Telecommunication (DET), Corso Duca Degli Abruzzi, 10129, Italy. 
			E-mail: franco.galante@polito.it
			
			C. Ravazzi is with the Institute of Electronics, Computer and Telecommunication Engineering, National Research Council of Italy (CNR-IEIIT), c/o Politecnico di Torino, Corso Duca Degli Abruzzi, 10129, Italy. E-mail: chiara.ravazzi@ieiit.cnr.it
			
			M. Garetto is with the University of Turin,  Computer Science Department, Corso Svizzera 185, 10149 Torino, Italy. E-mail: michele.garetto@unito.it
			
			E. Leonardi is with the Politecnico di Torino, Department of Electronics and Telecommunication (DET) and research associate at National Research Council of Italy (CNR-IEIIT).
			E-mail: emilio.leonardi@polito.it.
	}}
	\title{Planning interventions in a controlled pandemic:\\ the COVID-19 case}
	\maketitle

\begin{abstract}
	Restrictions on social and economic activities, as well as vaccinations, have been a key intervention in containing the COVID-19 epidemic. 
	Our work focuses on better understanding the options available to
	policymakers under the conditions and uncertainties created by the onset of a new pandemic. 
	More precisely, we focus on two control strategies. The first aims to control the rate of new infections to prevent congestion of the health care system. The latter directly controls hospitalizations and intensive care units (ICUs) occupation. By a first-order analysis, we show that, on the one hand, due to the difficulty in contact tracing and the lack of accurate information, controlling the transmission rate may be difficult, leading to instability. On the other hand, although hospitalizations and ICUs are easily accessible and less noisy than the rate of new infections, a delay is introduced in the control loop, which may endanger system stability. Our framework allows assessing the impact on economic and social costs of the above strategies in a scenario enriched by: i) population heterogeneity in terms of mortality rate and risk exposure, ii) closed-loop control of the epidemiological curve, and iii) progressive vaccination of individuals.
\end{abstract}

\section{Introduction}
	{\color{black}{Throughout the recent COVID-19 pandemic, governments worldwide faced the challenge of developing effective strategies to contain the virus while minimizing its economic and societal impact. As new waves of infection emerge, swift implementation of regulations becomes imperative to curtail human mobility and activities that facilitate virus transmission. From the pandemic onset, it was evident \cite{Anderson2020} that governments would face a delicate balance between reducing COVID-19 fatalities and mitigating the economic fallout caused by the spread of the virus. While individuals prioritize preserving life, governments must take action to manage the inevitable economic downturn.
			
	Once the epidemiological curve starts to decline, policymakers in democratic nations often face pressure from various stakeholders to ease restrictions and resume suspended activities. Striking an equilibrium becomes desirable to ensure both economic and social well-being, where the spread of the virus is effectively controlled with minimal limitations, as emphasized in \cite{Kantner20}.
			
	About a year into the COVID-19 pandemic, several vaccines against the virus started to become available. However, the availability of these vaccines was initially limited, and their effectiveness in preventing infection was not yet fully understood. In light of these uncertainties, developing strategies to prioritize vaccine distribution became challenging. The interested reader can refer to \cite{8a74160db44a45239bdb2a4b6e2e711d} for insights and analysis on the complexities of developing effective vaccine prioritization strategies in such uncertain times.
			
	}}

	{\color{black}{In this study, our objective is to enhance our comprehension of the choices that policymakers have at their disposal during conditions of a pandemic emergency. We aim to explore different strategies in-depth, measures and approaches that policymakers can consider and implement to effectively respond to the challenges posed by a pandemic aiming at safeguarding public health, minimizing the impact on society, and ensuring the well-being of individuals.

\subsection{Modeling epidemic spread}
				
	{\color{black}{The introduction of the SIR model in \cite{KermackMcKendrick1927} marked a significant milestone in utilizing mathematical models to understand disease dynamics and forecast the spread of epidemics. SIR-like models divide the population into compartments based on disease status, defining transitions among them. We refer to \cite{SM} for an overview.}} Many of these models assume homogeneous populations, disregarding the intrinsic heterogeneity of society. The authors of \cite{Vespignani_contacts} provide a more accurate description, where they introduce age-specific contact patterns.
				
	{\color{black}{Historically, the focus of studies in the field of epidemiology has primarily been on understanding the phenomenological aspects of epidemics. These studies aimed to assess the accuracy of different models in predicting the evolution of epidemics and identifying easily interpretable parameters that capture the qualitative behavior of infectious diseases. One such parameter of significant interest has been the \textit{basic reproduction number}, which characterizes the conditions under which an infection outbreak can occur.
	There is a growing recognition of the need to proactively design \textit{simple} and effective control measures to combat and mitigate the spread of infectious diseases in the possibility of future pandemics.

\subsection{Control via non-pharmaceutical interventions}

	In this paper, we assume a central planner perspective, where the government can impose control measures on the population for the overall benefit of public health.
	Several papers published in the 1970s, such as \cite{Taylor68,Abakuks73,dcf1cfcc-df09-3675-9da2-3693405a72f1,WICKWIRE1975325,SethiStaats1978}, focused on studying initial optimal control problems in the context of the classical SIR model. Building upon these foundations, subsequent works like \cite{hansen2011optimal} and \cite{RePEc:cwl:cwldpp:2229} expanded on the research and extended the models. These studies specifically addressed the challenge of minimizing the size of an epidemic outbreak and the cost of interventions. The control mechanisms explored in these models included regulating social distancing levels and implementing measures like isolation. These control strategies were also subject to rate constraints, ensuring the rate remained below a specified threshold.
	Papers on optimal control problems \cite{Behncke00,BOLZONI201786} consider the minimization of a composite function taking into account the epidemic cost, related to the size of the outbreak or number of deaths, and the economic cost. The control perspective has been widely embraced in recent literature regarding the COVID-19 pandemic. Consequently, there has been considerable discussion about the effects of lockdown measures on healthcare, society, and the economy.
	The problem of minimizing the cost of a lockdown under the only constraint of maintaining the infection below a certain threshold to cope with ICU congestion problems is also considered in \cite{doi:10.1287/mnsc.2022.4318}. 
						
\subsection{Control via vaccination}

	The paper by \cite{britton2019optimal} focuses on the optimal control of vaccination dynamics during an influenza epidemic. It provides insights into the design of vaccination strategies to effectively control the spread of the disease, considering factors such as limited vaccine supply and variations in transmission and severity across different groups.
	In \cite{feng2013optimal}, optimal vaccination and treatment strategies are studied in a multi-group epidemic model. The analysis explores the trade-offs between vaccination coverage and treatment allocation to maximize overall disease control, considering the interactions between different population groups.
	Finally, \cite{zhang2014optimal} explores the optimal timing and allocation of vaccinations based on age groups to maximize the effectiveness of the vaccination campaign and minimize the spread of infectious diseases.
	The research conducted in \cite{altarelli} and \cite{galeotti2013strategic} contributes to the field by addressing the challenge of resource allocation for vaccination efforts in the context of epidemic control. By utilizing optimization techniques, these studies provide insights into the most effective strategies for targeting specific nodes in a contact network or groups within a population, considering both the budgetary constraints and the dynamics of disease transmission.

\subsection{Main contribution}
	This paper investigates a novel class of compartmental models that draw inspiration from the features of the COVID-19 epidemic.  We enhance  previous models by incorporating: i) variability in mortality rate and risk exposure among different population segments, ii) closed-loop control mechanisms to regulate the epidemiological curve, and iii)  progressive vaccination campaigns.
}}
	More precisely, differently from previous works on the subject (see \cite{matrajt21}, \cite{SaadRoy2021}, \cite{Goldsteine21}, \cite{galvani09}, \cite{Sandmann2021}, \cite{monod21}, \cite{prio21}), our modeling framework explicitly represents the heterogeneity of risk exposure across population segments. 
				
	Our study takes a different perspective than the optimal control approach. We deliberately examine \textit{simple} control strategies to provide practical insights and guidelines for decision-makers who may not have access to sophisticated optimization techniques or detailed knowledge of the underlying epidemic mathematical laws  (e.g., parameters). Our focus is on identifying practical control strategies that can be implemented in real-world scenarios without the need for complex computations or extensive data.
				
	Indeed, optimal control approaches in epidemiology have certain drawbacks worth mentioning. These approaches often lack closed-form analytical solutions, requiring numerical methods for their implementation \cite{2156-8472_2022_1_201, NBERw27794}. While numerical solutions can provide valuable insights, they can be challenging to interpret and translate into practical control measures. This lack of interpretability hampers the ability to univocally understand the implications and consequences of the obtained optimal control strategies.
	Another drawback is that optimal control solutions may strongly depend on parameters, leading to abrupt changes in the optimal interventions. These \lq \lq phase transitions" can make implementing and managing the control measures in practice difficult. Minor changes in the parameters or system conditions may result in significant shifts in the optimal strategies.
	Furthermore, optimal control approaches often rely on detailed and precise knowledge of the system dynamics, including accurate parameter values and functional forms of the underlying equations. In real-world scenarios, such detailed information may be unavailable or subject to significant uncertainties.
	Considering these drawbacks, alternative approaches focusing on more straightforward and tangible strategies become appealing. Strategies based on explicit rules or thresholds offer the advantage of being easily understandable, implementable, and interpretable. They allow policymakers and stakeholders to make informed decisions based on practical considerations, even with uncertainties or limited data availability.
	In the same spirit as \cite{9682977}, we will mainly focus on two control feedback strategies based on controlling the rate of new infections or maintaining the occupation of healthcare facilities below a given level, and we evaluate the economic cost of non-pharmaceutical interventions and the social cost in terms of mortality.
	For the sake of simplicity and analytical tractability, we consider an \lq\lq ideal"  scenario in which the system operates near the equilibrium point, where the effective reproduction number equals one (we provide local stability results).
	This regimen appears to be a desirable condition whereby the number of infected individuals, and thus those requiring intensive treatment, is maintained at a sustainable level, even over long periods, while applying minimal durable mobility restrictions.
				
	It is worth noting that while it can be expected that some individuals will naturally reduce their interactions out of fear of illness, our model does not explicitly incorporate this behavior as explored in \cite{carmona16} and \cite{Aurell22}.
	Additionally, we do not consider the concept of \lq\lq cost of anarchy" as explored in~\cite{Romuald20}.

	Finally, with the same spirit, we compare simple vaccination policies based on different assigned priorities and intervals between the first and second dose administration. 

	It appears that in the literature, no works explicitly address the combined aspects of rate control/ICU occupation in feedback and vaccination prioritization, exploring them separately. % The available research tends to focus on either rate control in feedback or vaccination prioritization separately, rather than exploring the synergistic effects of both strategies.
	The absence of research specifically addressing the integration of non-pharmaceutical control in feedback and vaccination prioritization highlights a significant gap in the literature, particularly noteworthy given the current context of the COVID-19 pandemic, where vaccination campaigns have been implemented alongside other control measures. %The interplay between rate control strategies and vaccination prioritization is critical for developing comprehensive and adaptive approaches to epidemic control. 
	Our paper aims to fill, at least in part, this gap.
				
\subsection{Outline of the paper}
	Section \ref{subsec:model} presents an in-depth examination of the proposed extended SIR model in the absence of public intervention. Section \ref{sect:epidemic} discusses two control measures to limit the spread of the epidemic: non-pharmaceutical, e.g., lockdown measures, and pharmaceutical interventions, i.e., vaccination prioritization. The impact of controlling the infection rate or exerting control on hospitalizations and ICU occupancy through mobility reduction 	measures are explored in Sections  \ref{sect:lambda} and Section \ref{sec:controlHT}, respectively. 
	In Section \ref{sec:prelimres}, we support our modeling choices by first comparing the performance of our \textit{simple} control strategies with optimal control, and then showing the impact of population heterogeneity on the dynamics.
	We perform extensive simulations and a thorough discussion for a \textit{reference} scenario inspired by the COVID-19 pandemic in Section  \ref{sec:reference}. Section \ref{sect:conc} concludes the article. We provide in the Appendix a more comprehensive exposition of the model, and we include details on the data-driven derivation of the $f_{r,p}$ distributions and the choice of parameters of the reference scenario.
}}
		
\section{Base model}\label{subsec:model}
	
	We start by describing a base version of our compartmental model to describe the spread of a disease in a non-homogeneous population of size $N$ in the absence of any intervention (either pharmaceutical or non-pharmaceutical).
		
	Socio-demographic groups are described by the joint distribution $f_{r,p}$ related to the risk exposure $r$ (also referred to as contact rate) and the death probability $p$ of the individuals.

	We consider six epidemiological states: let $S_{r,p}(t)$, $I_{r,p}(t)$, $M_{r,p}(t)$, $H_{r,p}(t)$, $T_{r,p}(t)$, and $D_{r,p}(t)$ denote the number of individuals characterized by $(r,p)$ who at time $t$ are susceptible, infected, immune, hospitalized, under intensive treatment and dead, respectively.
		
	{\color{black}{The system presented here can be derived from stochastic processes, as in \cite{britton2019mathematical}. Thus there exists an underlying individual-based model in which all the states have a probabilistic and statistical interpretation. In particular, the amount of time spent by an individual in the infected, hospitalized, intensive therapy, the immune compartment is exponentially distributed with mean values $1/\gamma$, $1/\phi$, $1/\tau$, $1/\mu$, respectively. These parameters determine the average time an individual remains in each respective state. We utilize a set of ordinary differential equations to describe the system dynamics. \mbox{Figure \ref{fig:our_model}} visually depicts the transitions between different compartments and illustrates the flow of individuals within the system. The transitions between different states in the epidemiological model can be described as follows:

	\begin{itemize}
		\item Susceptible to Infected: Susceptible individuals (S) become \textcolor{black}{I}nfected (I) when they come in contact with infected individuals. The corresponding transition rate depends on the contact rate (risk exposure $r$) and the number of infected individuals in the population.
		\item  Infected to Immune: Infected individuals (I) can recover from the disease and acquire immunity, transitioning to the i\textcolor{black}{M}mune state (M). The \textit{recovery} rate governs the transition and depends on the average infection duration before recovery. 
		\item Infected to Hospitalized: Some \textcolor{black}{I}nfected individuals (I) may develop severe symptoms and require \textcolor{black}{H}ospitalization (H).Various factors influence the corresponding transition rate, such as the healthcare capacity, the proportion of infected individuals needing hospital care, and also disease severity linked to the \textit{fragility} of individuals. %\textit{sensitivity} of individuals.
		\item Hospitalized to Under Intensive Treatment: Hospitalized individuals (H) who require intensive care \textcolor{black}{T}reatment may be transferred to the intensive therapy state (T). The corresponding rate depends on factors such as the availability of intensive care units and the duration of hospitalization before the transfer to ICU.
		\item Under Intensive Treatment to  Deceased:  Unfortunately, some infected individuals under intensive \textcolor{black}{T}reatment (T) may succumb to the disease and move to the \textcolor{black}{D}eceased state (D).  
	\end{itemize}
				
	It is important to note that the specific transition rates between states are governed by the model's parameters, which can be estimated based on empirical data or derived from previous studies. These transition dynamics capture the progression of the disease within the population and are crucial for understanding the spread and impact of the epidemic. In the Appendix (Appendix A-C), we show how  direct transitions $I \to D$ and $H\to D$ can be added to the model.
		}}
	More precisely, the following set of ordinary differential equations describes the system dynamics:
	\begin{align}\begin{split}\label{eq:dyn_sys}
			\dot S_{r,p}(t) & = -{\sigma(t)}
			\left( \sum_{r',p'} r' I_{r',p'}(t) \right)
			\frac{r S_{r,p}(t)}{\sum_{r',p'} r' N f_{r',p'}} + \mu M(t) \\
			\dot I_{r,p}(t) & = {\sigma(t)}
			\left( \sum_{r',p'} r' I_{r',p'}(t) \right)
			\frac{r S_{r,p}(t)}{\sum_{r',p'} r' N f_{r',p'}}
			- \gamma I_{r,p}(t)  \\
			\dot H_{r,p}(t) & = \gamma p_{r,p}^{IH}  I_{r,p}(t) - \phi H_{r,p}(t)  \\
			\dot T_{r,p}(t) & = \phi p_{r,p}^{HT} H_{r,p}(t) -\tau T_{r,p}(t) \\
			\dot D_{r,p}(t)   &= \tau p_{r,p}^{TD}(t) T_{r,p}(t) \\
			\dot M_{r,p}(t)  &= 
			\gamma  (1-p_{r,p}^{IH}) I_{r,p}(t) 
			+ \phi (1-p_{r,p}^{HT}) H_{r,p}(t) \\
			& + \tau (1-p_{r,p}^{TD}(t)) T_{r,p}(t)
			-\mu M(t)
		\end{split}
		\vspace{-2mm}
	\end{align}
	where 
	$\sigma(t) \geq 0$
	represents all exogenous (uncontrolled) factors changing the infection strength (e.g., seasonal effects).
	In this paper, we will assume for simplicity that $\sigma(t) = \sigma$	is constant.       
		
	\begin{figure}[h!]
		\centering
		\includegraphics[width=0.95\columnwidth]{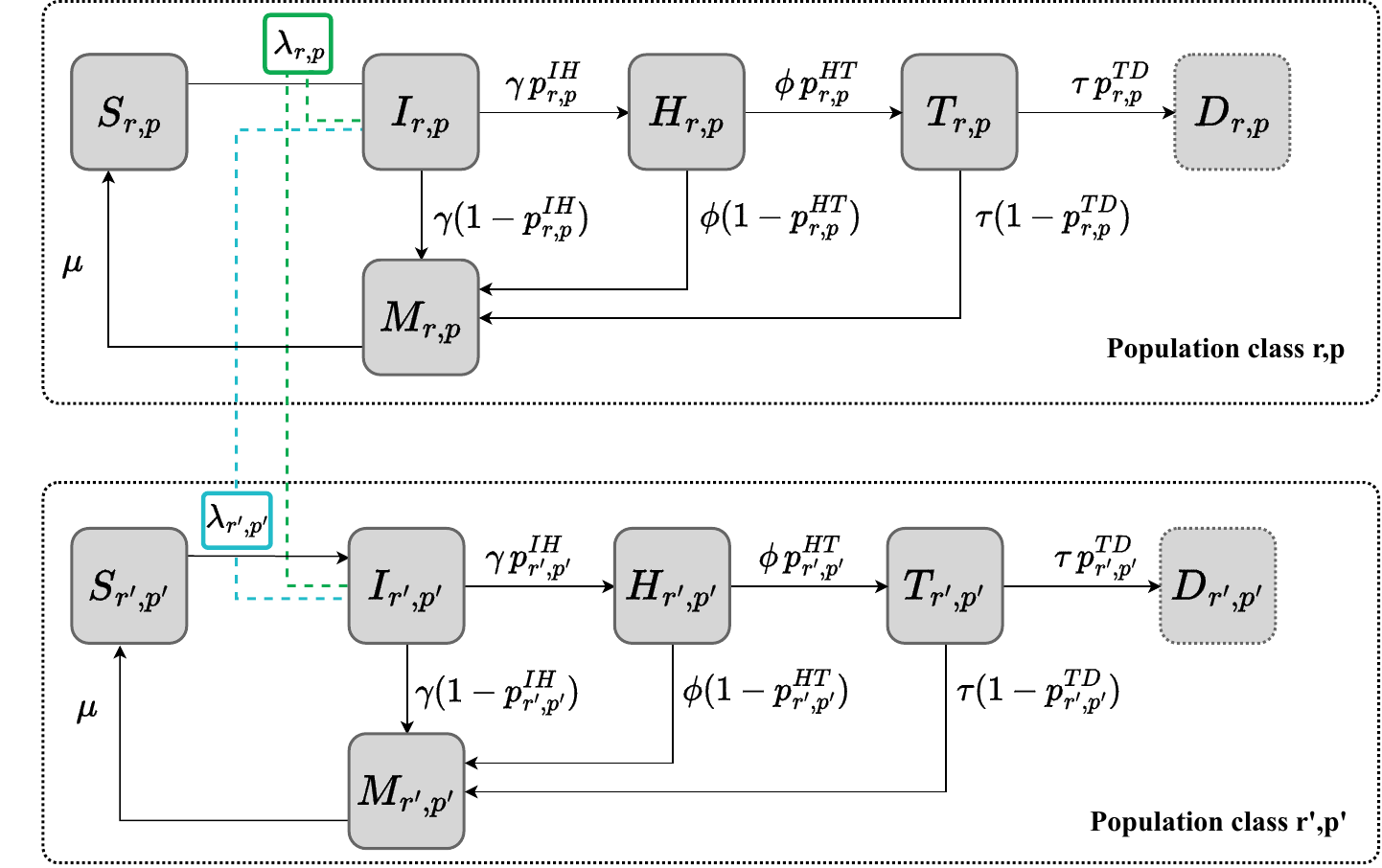}
		\caption{Schematic representation of the proposed model.}
		\label{fig:our_model}
	\end{figure}
		
	The total (uncontrolled) rate of new infections is equal to:
	$$\lambda_{\textcolor{black}{U}}(t) = \sigma(t) \left( \sum_{r,p} r I_{r,p}(t) \right)
	\frac{\sum_{r,p} r S_{r,p}(t)}{\sum_{r,p} r N f_{r,p}}, $$
	The total number of susceptible people is $S(t) = \sum_{r,p} S_{r,p}(t)$.
	Similarly, we introduce the total number of people in the other compartments: $I(t)$, $H(t)$, $T(t)$, $M(t)$, $D(t)$.
	Probabilities $p_{r,p}^{IH}$, $p_{r,p}^{ HT }$ and $p_{r,p}^{ TD }(t)$
	denote the probability that an individual of type $(r,p)$ moves between the two compartments indicated in the superscript.
	We make probability $p_{r,p}^{ TD }(t)$ depend on $T(t)$, i.e., on the instantaneous total number of people in ICUs since the death probability dramatically increases when ICUs are saturated.
	Denoted with $\widehat T$ the number of available ICUs,
	when $T(t) \le  %T_{\max}
	\widehat{T}$, the overall death probability of an infected person
	is assumed to be equal to $p$:
	\begin{equation}\label{eq:prodp}
		p_{r,p}^{IH} \cdot p_{r,p}^{HT} \cdot \hat{p}_{r,p}^{TD} = p \qquad 
		\text{if }T(t) \le {\widehat T}, %T_{\max} 
	\end{equation} 
	where $\hat{p}_{r,p}^{TD}$ is the probability
	to transit from state $T$ to state $D$ in \lq normal' conditions, i.e., when
	$T(t) \le %T_{\max}  
	{\widehat{T}}$. Therefore, $p_{r,p}^{TD}(t) = \hat{p}_{r,p}^{TD}$ as long as
	$T(t) \le %T_{\max}  
	{\widehat{T}}$.   
	
	When $T(t) > %T_{\max}
	{\widehat{T}}$, we assume that the death probability of people who cannot receive treatment is increased by a factor $\theta$,
	hence $p_{r,p}^{ TD }(t)$ is dynamically adjusted as follows:
	\begin{multline}
		p_{r,p}^{TD}(t) = \hat{p}_{r,p}^{TD} \frac{ {\widehat{T} }   } {T(t)} +  
		\min\{1,\theta \cdot \hat{p}_{r,p}^{TD}\} 
		\frac{T(t)-{\widehat{T}} }  {T(t)}.
		% \qquad \text{if }T(t) > {T_{\max} 
			\label{eq:pTD}
	\end{multline}
			
	We consider the case in which individuals might lose immunity with rate $\mu$, thus becoming susceptible again. 
	It should be noticed that the mass preservation 
	$\dot S_{r,p}(t) +\dot I_{r,p}(t) +\dot M_{r,p}(t) + \dot H_{r,p}(t) + \dot T_{r,p}(t) + 
	\dot D_{r,p}(t)=0 $ holds for all $t\geq0$.
	
	\medskip
	{\color{black}{References as \cite{acemoglu2020optimal,bhattacharyya2021modelling,britton2019mathematical,chowdhury2020dynamic,li2020epidemiological} explore SIR-like models with various extensions, including population heterogeneity, different compartments (such as susceptible, infected, immune, hospitalized, under intensive treatment, and deceased), and considerations of specific epidemics like COVID-19. 
			They provide insights into such extended SIR models' dynamics and control measures. 
			Unlike \cite{charpentier20}, our model does not distinguish between infected individuals who remain undetected and those who are detected, nor does it consider this distinction for those who recover. %\uline{nor between individuals who recover undetected and those who are detected.}
			Nonetheless, our proposed model introduces several innovative features that can be summarized as follows.}}
	\begin{remark}[Heterogeneity of population in terms of fatality rate and
		risk exposure]
		At the country level, populations exhibit remarkable differences in their characteristic features. For instance, in terms of age distribution, overall health condition, and daily contacts among individuals, which in turn depend on the country's customs and, more broadly, on its welfare.
		Heterogeneity in the population contact patterns may play a role in disease transmission, as it may favor a faster virus outbreak. Starting from available data, we characterized the population by a joint distribution function $f_{r,a}$. We then used it to derive the distribution  $f_{r, p}$. The last step was accomplished by exploiting data that relate death probability $p$ to the age $a$ of individuals.
		Specific distributions $f_{r, p}$ for several countries were obtained from contacts patterns reported in \cite{Vespignani_contacts} and the case fatality rate from \cite{CFR2020}, specified for various age classes.
		\begin{figure}[h!]\begin{tabular}{ccc}
				\includegraphics[trim=1.9cm 0.6cm 1.9cm 0.8cm, clip, width=0.28\columnwidth]{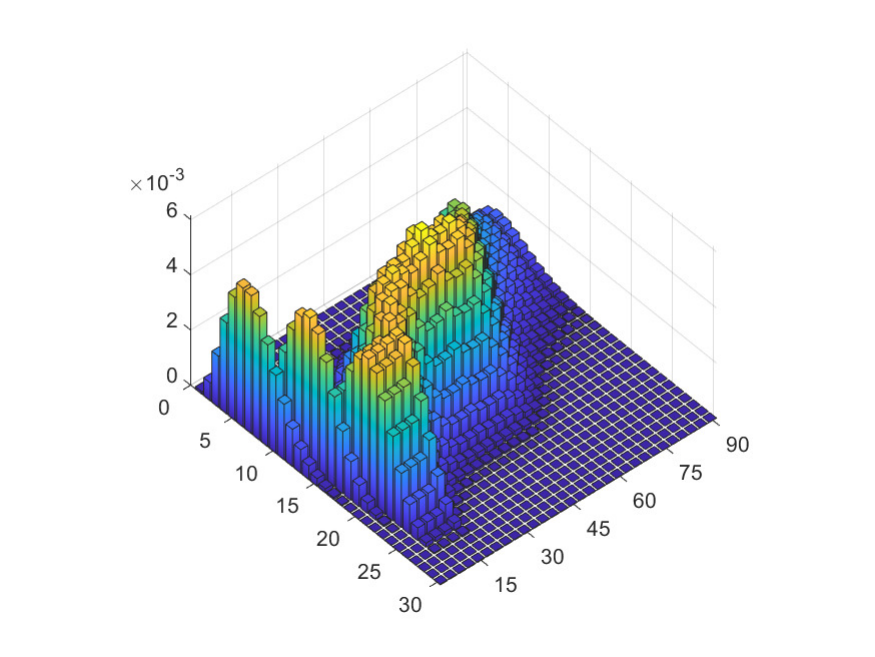}&
				\includegraphics[trim=1.9cm 0.6cm 1.9cm 0.8cm, clip, width=0.28\columnwidth]{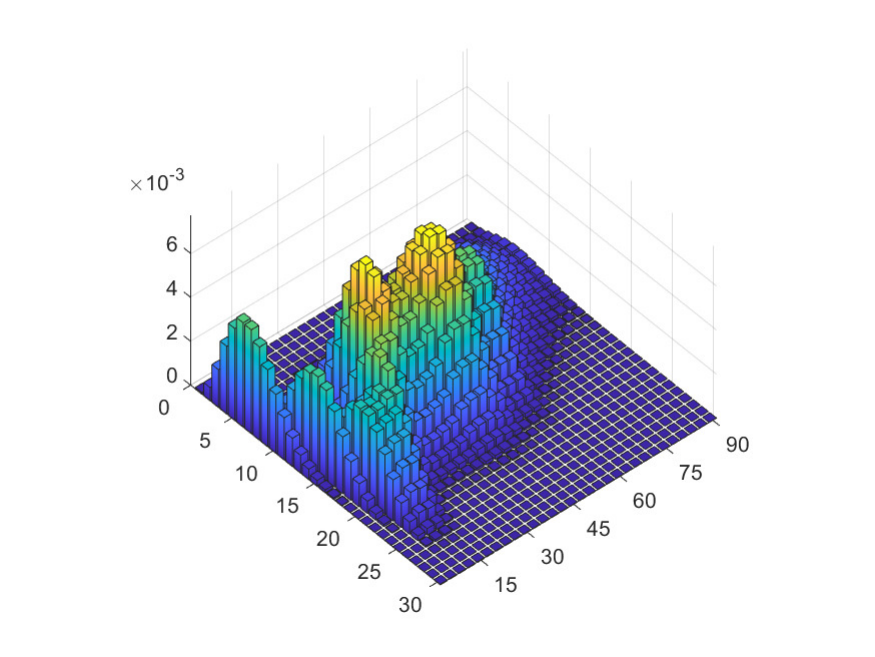}&
				\includegraphics[trim=1.9cm 0.6cm 1.9cm 0.8cm, clip, width=0.28\columnwidth]{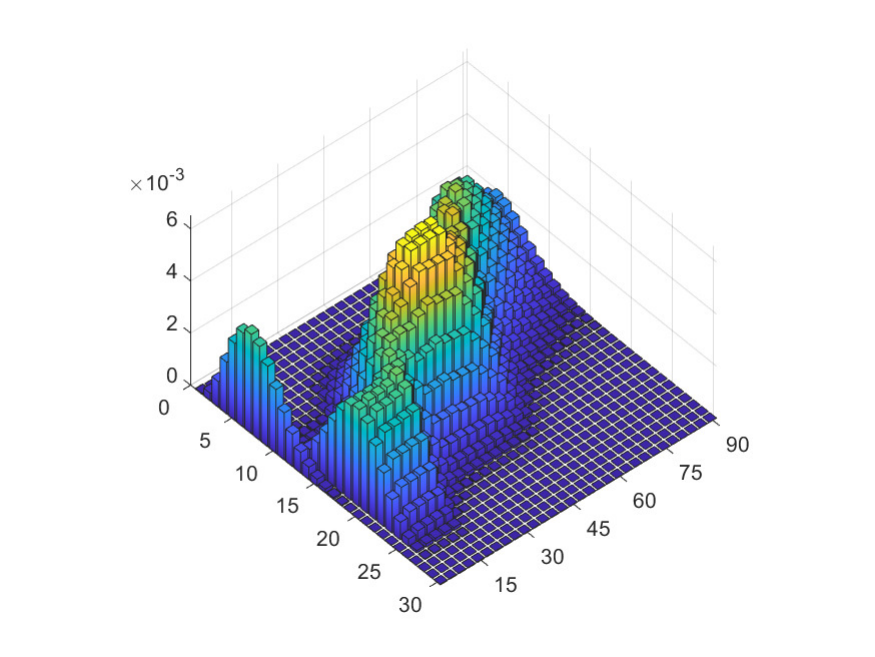}\\
				(a)&(b)&(c)
			\end{tabular}
			\caption{$f_{r,a}$ distributions for different countries: (a) Australia (b) China (c)~Italy. The age bins are $A=\{\text{0-2},\text{3-5},..,\geq 87 \}$ the daily number of contacts $r \in [0,r_{\max}=30]$.} \label{fig:synthetic_populations}
		\end{figure}

		Few examples of distributions $f_{r,a}$, capturing the negative correlation between risk exposure and age for different countries, are depicted in Fig. \ref{fig:synthetic_populations}. We refer the readers to the Appendix for more details.% on how $f_{r,p}$ has been derived from publicly available data. 
		\label{remark:datadriven}
	\end{remark}
	{\color{black}{The continuous model used in this study can be interpreted as a mean field approximation of an epidemic model that operates over a dynamic network  \cite{pastor2015epidemic,newman2002spread,keeling2005networks}. According to this interpretation, the risk exposure parameter represents the average number of contacts (per time unit) an individual experiences with others over a fixed time window. Therefore it can be seen as the degree of the corresponding node within a network, in which nodes represent individuals, and the edges represent the contacts between them. Note that pairs of individuals establishing contacts are randomly selected, as for the configuration model. This approach allows us to understand the dynamics of epidemics in terms of the interactions between individuals in a network setting.
	}}
	
	\begin{remark}[Quadratic dependence on the  risk exposure $r$]
		Note that individuals with large $r$, i.e., pronounced social attitudes, represent at the same time the component of the population with the highest risk of infection and the highest chance of transmitting the disease.
		Therefore,  the \lq\lq impact" of every individual to the spread of the infection depends quadratically on $r$.
	\end{remark}
	{\color{black}\begin{remark}[Single class and reduction to the SIR model]
			Consider a scenario where the population consists of a single class with parameters $(r,p)$ and set $\mu=0$. We remark that by merging the compartments H (hospitalized), T (treated), and I (infected) and combining those representing the recovered (M) individuals and deaths (D) into a single recovered compartment, the model described by the system of equations (\ref{eq:dyn_sys}) simplifies to the classical SIR model.
	\end{remark}}
			
	\begin{remark}[Edge-perspective analysis]\label{oss:edge_perspective} 
		Defining 
		$\widetilde{I}(t)=\sum_{r,p}rI_{r,p}(t)$
		as the number of infected contacts, % and $\widetilde{N}=N\sum_{r,p}rf_{r,p}$, 
		multiplying the first and second equation in (\ref{eq:dyn_sys}) 
		by $r$ and summing over $r$ and $p$, we obtain
		$\dot{\widetilde{I}}(t) = \gamma \left(\mathcal{R}(t)-1 \right) \widetilde{I}(t)
		$
		where $\mathcal{R}(t) = \frac{\sigma \sum_{r,p} r^2 S_{r,p}(t)}{\gamma \sum_{r,p} r N f_{r,p}}$.
		
		At early stages of epidemic, we  can approximate  
		$S_{r,p}(t) \approx N f_{r,p}$, obtaining:
		$\dot{\widetilde{I}}(t) = \gamma \left(\mathcal{R}_0-1 \right) \widetilde{I}(t)
		$
		where we define the related basic reproduction number \mbox{$\mathcal{R}_0=\frac{\sigma}{\gamma}\mathbb{E}[r^2]/\mathbb{E}[r]$.}
		As it is  clear from the system of equation describing the evolution of the state variables, an edge-perspective analysis provides a fundamental tool to study the dynamics as a natural generalization of the SIR model. 
			
		\end{remark}

\section{Epidemic control}\label{sect:epidemic}
	To mitigate the epidemic, several interventions are possible:
	(a) investments in the public health system, e.g., increasing the available number of ICUs $\widehat T$ and hospitalization facilities $\widehat H$, (b) non-pharmaceutical interventions, namely, public health measures 
	preventing and/or controlling virus transmission in the community; (c) vaccination that aims to reduce both the transmission and clinical severity of the disease.

	Our analysis will focus on quantifying the cost and the impact of different control strategies that jointly exploit 
	non-pharmaceutical interventions and vaccination. 			
	\vspace{-0.2 cm}
\subsection{Control via non-pharmaceutical interventions}
		
	In our framework, we do not model the effects of social distancing and other countermeasures at a microscopic (class-specific) level. 
	Instead, we summarize their effects by a single control parameter $\rho(t)$ that scales down the overall rate of potential (uncontrolled) new infections. 
	
	Specifically, we \textcolor{black}{include the control in the model described by} \eqref{eq:dyn_sys} by posing the actual intensity of new infections ${\lambda}(t) $ equal to  
	$ \frac{{\lambda}_{U}(t)}{\rho(t)} $,   
	leading to an effective reproduction number:
	$$\mathcal{R}^{\rho}{(t)}=\frac{\sigma}{\rho(t) \gamma} \frac{\sum_{r,p}r^2S_{r,p}(t)}{\sum_{r,p}Nrf_{r,p}}.$$
	
	In this scenario, we will distinguish two main contributions to the cost: the social and the economic cost.
	{\color{black}{It is crucial to note that the distinction between social and economic costs is not always clear-cut. Lockdown measures, while aimed at minimizing the social cost of the pandemic in terms of reducing deaths, can also have economic repercussions. Similarly, the economic cost of the pandemic, such as job losses and reduced economic activity, can have social implications. 
	Moreover, for technical reasons in some cases we add a third component related to healthcare stress to the cost.  Accordingly we  define:
	}}
	\begin{enumerate}
		\item[(a)] the {\em social cost}, evaluated in terms of {\color{black}{the cumulative number of deaths as defined \cite{8085142}}};
		\item[(b)]{\color{black} the {\em  stress on the healthcare system} induced by the disease's severity; }
		\item[(c)] the {\em economic cost} $\mathfrak{C}=\mathfrak{C}(\rho)$, since widespread lockdowns cause a massive negative impact on the economy. 
	\end{enumerate}
	{\color{black}{In Figure \ref{Fig:1} we show some examples of economic costs as a function of the control parameter $\rho$. The economic costs are assumed monotone increasing with $\mathfrak{C}(1)=0.$ 
			\begin{figure}[h]\label{fig:costs}
				\begin{center}
					\begin{tikzpicture}[thick, scale=0.6]
						\begin{axis}[
							xmin = 0, xmax = 10,
							ymin = -0.5, ymax = 10,
							axis lines = left,
							axis x line = center,
							axis y line = center,
							x label style={at={(axis description cs:0.6,0)},anchor=north, below=1mm, font=\large},
							xlabel={Control $\rho$},
							y label style={at={(axis description cs:0,0.9)},rotate=90,anchor=south east, font=\large },
							ylabel={Economic cost $\mathfrak{C}(\rho)$},
							xtick = {1,10},
							ytick = {0,1,10},
							legend pos={north west}
							]
							\addplot[
							domain = 1:10,
							samples = 100,
							line width = 3pt,
							color = teal
							]
							{10*(1-1/x)^2};
							\addplot[
							domain = 1:10,
							samples = 100,
							line width = 3pt,
							color = cyan
							]
							{0.1*(x-1)^2};
							\addplot[
							domain = 1:10,
							samples = 100,
							line width = 3pt,
							color = violet
							]
							{0.9*(x-1)};
							%{(1-x)^2*81/100};
							\legend{$10(1-1/\rho)^2$,$0.9(\rho-1)$,$0.1(\rho-1)^2$}
						\end{axis}
					\end{tikzpicture}
				\end{center}
				\caption{Examples of economic costs as a function of control parameter}\label{Fig:1}
			\end{figure}
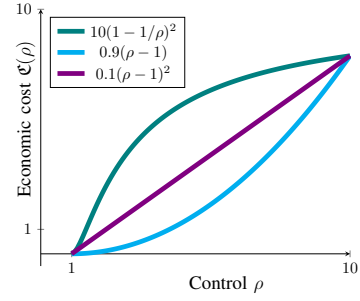
	}}
	In the optimal control formulation (see \cite{Kantner20} and reference therein) a terminal cost is generally defined by taking a linear combination of the above  costs and the policymaker aims at solving the following optimization problem:
	{\color{black}{\small{\begin{align}\vspace{-2mm}
					\rho^{\star}(t_{\max})&=\argmin{\rho:[0,t_{\max}]\rightarrow[1,\infty)}\kappa_1 \frac{D(t_{\max})}{N} + \nonumber\\
					&\quad+ \int_0^{t_{\max}}\left[\kappa_2 \left(\frac{T(t)}{N}\right)^\zeta+\kappa_3\mathfrak{C}(\rho(t))\right]\mathrm d t \nonumber \\
					& \text{s.t. dynamics in \ \eqref{eq:dyn_sys}} \label{eq:obj}
	\end{align} }}}
	\textcolor{black}{where the exponent $\zeta$ is typically assumed greater than~1, while
	$\kappa_{1},\kappa_2,\kappa_3 \ge 0$ are the parameters that weigh the social, the healthcare stress  and   the economic costs in the objective function, according to how much one values one over the others.}
	We emphasize that the selection of $t_{{\max}}$ should be carefully considered based on the specific context and dynamics of the epidemic under investigation.
	
	{\color{black}{Our goal is not to develop a mathematical theory of optimal control for epidemics but to provide a practical framework that informs public policy in controlling the spread of epidemics. We intend to offer decision-makers a means to compare and evaluate a set of feasible controls, allowing them to make informed choices based on the outcomes and trade-offs associated with different control strategies.}}
	Similarly to  \cite{9682977}, we will consider two \textit{simple} control strategies:  
	
	\begin{cpolicy1} [Control on New Infections]\label{policy-lambda}
		The rate of new infections is tightly controlled and kept at a certain desired level $\lambda_C$. The main goal is to avoid congestion in the sanitary system by controlling the circulation of the virus. 
	\end{cpolicy1}
		
	\begin{cpolicy2}  [Control on Hospitalizations and intensive Therapy occupancy]  \label{policy-therapies}
		It directly uses the current level of hospitalization/intensive-therapy occupancy as a control \mbox{signal}. 
		Such a signal is readily available and less noisy than the rate of new infections. However, it may introduce a delay in the control loop, which may endanger system stability.
	\end{cpolicy2}
			
\subsection{Control via vaccination prioritization}\label{subsec:priority}

	Vaccines are assumed to guarantee partial protection. 
	According to classification in \cite{MATRAJT201517}, we consider two efficacy descriptors: reduction in the probability of becoming infected (vaccine efficacy on susceptibility) and reduction in the pathogenicity (vaccine efficacy to prevent or diminish symptoms). 
	For simplicity, we neglect the vaccine response transient, and we consider a single type of vaccine administered in two doses separated by a fixed interval of $\Delta$ days.
	We assume that the administration rate of either dose is fixed, equal to $\xi$, so the entire population can be potentially vaccinated (with two doses) after $\mathcal{T}_v$ days. Hence we set $\xi= N/(\mathcal{T}_v- \Delta)$.
	
	Let $\mathsf{VE}^{1}, \mathsf{VE}^{2}$ be the vaccine efficacy on susceptibility after one or two doses, respectively. 
	Moreover, we assume that mortality is reduced by a factor $q_{\texttt{post}}$ after a single dose of vaccine.
	
	We assume that $N^{\textrm{novax}}$ people refuse vaccination uniformly distributed over the population. Their state evolution is still described by equations \equaref{dyn_sys}. Let $S_{r,p}^{\textrm{novax}}(t)$ be the number of no-vax people in class $(r,p)$ who are still susceptible at time~$t$.
	
	We describe the dynamics assuming individuals do not return to the susceptible state after infection or vaccination. This extension is not difficult, but we omit it for brevity.

	Vaccinations require the addition of a few more compartments:   
	Let $V_{r,p}^{\textrm{1m}}(t)$ be the number of people in class $(r,p)$ who have received just the first dose, which is already effective against the virus, i.e., they can no longer be infected.
	Let $V_{r,p}^{\textrm{1s}}(t)$ be the number of people in class $(r,p)$ still susceptible after receiving just the first dose.
	Let $V_{r,p}^{\textrm{2m}}(t)$ be the number of people in class $(r,p)$ who have received both doses and are immune. At last, let $V_{r,p}^{\textrm{2s}}(t)$ be the number of people in class $(r,p)$ who have received both doses but are still susceptible.
	Due to strict prioritization among classes, a given  class $(r,p)$ receives the first dose at full rate $\xi$ only within a specific time window:  $[\mathcal{T}^{\min}_{r,p},\mathcal{T}^{\max}_{r,p}]$ (to be specified later):
	\begin{equation*}
		\xi_{r,p}^{(1)}(t)=
		\left\{\begin{array}{ll}
			0  &   t< \mathcal{T}^{\min}_{r,p}\\
			\xi  &    \mathcal{T}^{ \min}_{r,p} \le t < \mathcal{T}^{\max}_{r,p} \\ % \wedge \xi_{r,p}^{(1)}(t-\Delta) = 0 \\
			0  &  t \ge \mathcal{T}^{\max}_{r,p}\\
		\end{array} \right.
	\end{equation*}
						
	Let  $V_{r,p}^1(t) = \int_{t-\Delta}^t  \xi _{r,p}^{(1)}(t) \mathrm{d} t $
	be the number of people in class $(r,p)$ who have received just the first   dose of vaccine at time $t$.
	The second dose of vaccine is administered at a rate
	\[
	\xi_{r,p}^{(2)}(t)=\frac{V_{r,p}^{1s} (t)+V_{r,p}^{1m} (t) }{V_{r,p}^1(t)}  \xi_{r,p}^{(1)}(t-\Delta)
	\]
	only to individuals who have received the first dose and have not been 
	infected in the meanwhile.
	At last, let
	\[
	\widehat S(t)= \sum_{r,p}r(S_{r,p}(t)+
	V_{r,p}^{\textrm{1s}}(t)+
	V_{r,p}^{\textrm{2s}}(t)+
	S_{r,p}^{\textrm{novax}}(t))
	\]
	be the total number of susceptible edges at time $t$.
	Note that
	$\lambda(t) = \frac{\sigma}{\rho(t)} \left( \sum_{r,p} r I_{r,p}(t) \right)
	\frac{\widehat{S} (t) }{\mathbb{E}[r]N} $.
	Since people who receive at least one dose are less likely to die,
	we need to keep track of them, hence vaccinated people who get infected 
	traverse a separate chain of compartments  
	$I_{r,p}^v(t), H_{r,p}^v(t), T_{r,p}^v(t)$ with respect to those
	who do not receive any dose \textcolor{black}{(see Figure 11 in the Appendix)}.
	Dynamics governing the evolution of $H_{r,p}^v(t), T_{r,p}^v(t)$
	are analogous to those in \equaref{dyn_sys} with
	the only difference that $p_{r,p}^{TD}(t)$ is replaced by $p_{r,p}^{TD}(t)/q_{\texttt{post}}$.
	\textcolor{black}{The complete system of differential equations is an extension of \equaref{dyn_sys}, it is omitted here for brevity and reported in sect.~V of \cite{SM}}
	The vaccination window for each class is computed based on the class priority:
	$
	\mathcal{T}^{(1), \max}_{r,p}= \inf \{t: 	S_{r,p}(t) =0 \}
	$; 
	$
	\mathcal{T}^{(1),\min}_{r,p}= \max_{ (r',p')\in HP(r,p)}\{  \mathcal{T}^{\max}_{r',p'}\}
	$,
	where $HP(r,p)$ is the set of classes with higher priority than~$(r,p)$.

	We consider the  Most Vulnerable First (MVF) and the Most Social First (MSF) policies: 
	\begin{vpolicy1}[Most Vulnerable First]\label{policy:MVF}
		The MVF policy aims to protect the most clinically vulnerable people, with the goal of minimizing the number of deaths. It prioritizes classes with a higher value of $p$. For the same $p$, classes with higher $r$ are vaccinated first.
	\end{vpolicy1}	 
	\begin{vpolicy2}[Most Social First]\label{policy:MSF}
		The MSF policy prioritizes people with a high contact rate, aiming to minimize the force of infection. Classes with a higher value of $r$ are prioritized. For the same $r$, classes with higher $p$ are vaccinated first.
	\end{vpolicy2}
	The MSF policy is similar in spirit to the degree-based vaccination policy in contact networks  \cite{9716860}, which targets the high-degree nodes first before moving on to lower-degree nodes.
	The interval $\Delta$ is another design parameter: prolonging the interval between doses, say from 3 to 12 weeks, might be a sensible choice under limited vaccine supplies, de facto minimizing hospitalization and deaths, especially when the efficacy of the first dose is sufficiently high.
						
	{\color{black}
		\begin{remark}
			The MSF and MVF policies defined above constitute only two examples of possible vaccination policies. 
			In principle, any possible prioritization (permutation) $\mathcal{\pi}_{(r,p)}$ of classes $(r,p)$ corresponds to a different vaccination policy.
			The optimal control problem defined in  \eqref{eq:obj} can be easily extended to take into considerations vaccinations as follows:
			
			{\small{\begin{align}\vspace{-2mm}
						(\rho^{\star}(t_{\max}), \pi^*_{(r,p)}  )&=\argmin{\stackrel{\rho:[0,t_{\max}]\rightarrow[1,\infty)}{   \pi_{(r,p)}   }} \kappa_1 \frac{D(t_{\max})}{N}+\nonumber\\
						&+\int_0^{t_{\max}}\left[\kappa_2 \left(\frac{T(t)}{N}\right)^{\zeta}+\kappa_3\mathfrak{C}(\rho(t))\right]\mathrm d t  \nonumber\\
						&\text{s.t. dynamics in (9), Appendix.}
			\end{align}}}
		\end{remark}
	}
						
\section{Control on new infections}\label{sect:lambda}
						
	In this section, we show that if function $\mathfrak{C}(\cdot)$ is  convex, we can devise a simple strategy  to minimize the overall economic cost. 
	As already observed,  a key role in the epidemic dynamics is played by $\widetilde I(t)$, which, roughly speaking,  represents the number of potentially infected contacts (see Remark \ref{oss:edge_perspective}). Thus, a sensible strategy is to control such a quantity.
	\textcolor{black}{In our derivations, we assume that %consider initial conditions such that  
		$S_{r,p}(t) \approx N f_{r,p}$, i.e., $S_{r,p}(t)$ can be considered constant.}
	{\color{black}{\begin{remark}The assumption of a constant number of susceptible individuals is accurate when dealing with a large population. In such a case, the rate of infection spread may have a minimal impact on the overall number of susceptible individuals, making it reasonable to treat it as constant for modeling purposes. The assumption holds true for a relatively short time horizon where the dynamics of infection spread and recovery do not significantly impact the population susceptibility.\end{remark}}}
	
	\textcolor{black}{This assumption allows for simplifications in the mathematical modeling and analysis. Indeed, given the definition of $\widetilde{I}(t)$, 
		multiplying the second equation in \eqref{eq:dyn_sys} by $r$ and summing over $r$ and $p$, we get:
		\begin{center}$
			\dot{\widetilde{I}}(t)=
			\gamma \left(\frac{\sigma}{\rho(t) \gamma} \frac{\sum_{r,p}r^2S_{r,p}(t)}{\sum_{r,p}Nrf_{r,p}}-1\right) \widetilde{I}(t) .
			$\end{center}
		under the assumption $ S_{r,p}(t)\approx Nf_{r,p}$, and defining  $\mathcal{R}^\rho(t)= \frac{\mathcal{R}_0}{\rho(t) }$, we obtain the equation:}
	\begin{equation}\label{eq:edgeinf}
		\dot{\widetilde{I}}(t) = \gamma \left( \frac{\mathcal{R}_0}{\rho(t)} - 1 \right) \widetilde{I}(t) =\gamma \left( \mathcal{R}^{\rho}(t) - 1 \right) \widetilde{I}(t).
	\end{equation}
						
\subsection{Minimizing the economic cost in a fixed window}

	Fixing a target value $\widetilde{I}^{\star}$  for  $\widetilde I(t)$, to be met \textcolor{black}{within a prefixed a time horizon $t_{\max}$}, Proposition \ref{prop:minimizing_cost} establishes optimality conditions.
	
	{\color{black}{\begin{proposition}\label{prop:minimizing_cost}
		Let $\mathfrak{C}(\rho)$ be a monotone increasing and convex function in $\rho\in[1,+\infty]$ and assume $ S_{r,p}(t)\approx Nf_{r,p}$. 
		Among all trajectories, such that $\widetilde I(t_{\max})=\sum_{r,p}rI_{r,p}(t_{\max})=\widetilde{I}^{\star}$, the one that minimizes the overall economic cost in $[0,t_{\max}]$, is  the one corresponding to:  $$\mathcal{R}^{\rho}(t)=1+\frac{1}{\gamma }\log \left(\widetilde{I}^{\star}/\widetilde{I}(0)\right)\quad\forall t\in[0,t_{\max}],$$ and, 
		$$\rho(t)= \frac{\sigma}{\gamma}\frac{\mathbb{E}[r^2]
		}{\mathbb{E}[r]  }\left[1+\frac{1}{\gamma T}\log \left(\widetilde{I}^{\star}/\widetilde{I}(0)\right)\right]^{-1}
		\quad \forall t\in[0, t_{\max}].$$
	\end{proposition}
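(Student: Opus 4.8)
The plan is to reduce the infinite-dimensional optimal-control problem to a one-dimensional variational problem for the scalar reproduction number $\mathcal{R}^{\rho}(t)$ and then close it with Jensen's inequality.

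First I would exploit the edge-perspective identity of Remark~\ref{oss:edge_perspective}: $\dot{\widetilde I}(t)=\gamma\big(\mathcal{R}^{\rho}(t)-1\big)\widetilde I(t)$, where, under the assumed early-stage approximation $S_{r,p}(t)\approx Nf_{r,p}$, one has $\mathcal{R}^{\rho}(t)=\frac{\sigma}{\gamma\rho(t)}\,\mathbb{E}[r^{2}]/\mathbb{E}[r]$, a strictly decreasing (hence invertible) function of $\rho(t)$ alone, with range $(0,\mathcal{R}_{0}]$. Thus choosing an admissible control $\rho:[0,t_{\max}]\to[1,+\infty)$ is the same as choosing the trajectory $t\mapsto\mathcal{R}^{\rho}(t)\in(0,\mathcal{R}_{0}]$, and neither the $\widetilde I$-dynamics nor the running cost $\mathfrak{C}(\rho(t))$ then depend on the remaining state. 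Integrating $\frac{\mathrm{d}}{\mathrm{d}t}\log\widetilde I(t)=\gamma(\mathcal{R}^{\rho}(t)-1)$ over $[0,t_{\max}]$ turns the terminal requirement $\widetilde I(t_{\max})=\widetilde I^{\star}$ into the single affine (mean-value) constraint
\[
\frac{1}{t_{\max}}\int_{0}^{t_{\max}}\mathcal{R}^{\rho}(t)\,\mathrm{d}t\;=\;1+\frac{1}{\gamma t_{\max}}\log\!\big(\widetilde I^{\star}/\widetilde I(0)\big)\;=:\;\mathcal{R}^{\star},
\]
which is the constant value appearing in the statement (with $T\equiv t_{\max}$).

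Next I would rewrite the cost in the new variable. Inverting the above, $\rho(t)=c/\mathcal{R}^{\rho}(t)$ with $c:=\frac{\sigma}{\gamma}\,\mathbb{E}[r^{2}]/\mathbb{E}[r]=\mathcal{R}_{0}$, so that $\mathfrak{C}(\rho(t))=\Psi\big(\mathcal{R}^{\rho}(t)\big)$ with $\Psi(x):=\mathfrak{C}(c/x)$. On $(0,+\infty)$ the map $x\mapsto c/x$ is convex, and $\mathfrak{C}$ is convex and nondecreasing on $[1,+\infty)$; since a convex nondecreasing function composed with a convex function is convex, $\Psi$ is convex on $(0,\mathcal{R}_{0}]$. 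Jensen's inequality, applied to $\Psi$ and the probability measure $\mathrm{d}t/t_{\max}$ on $[0,t_{\max}]$, then gives
\[
\frac{1}{t_{\max}}\int_{0}^{t_{\max}}\mathfrak{C}(\rho(t))\,\mathrm{d}t=\frac{1}{t_{\max}}\int_{0}^{t_{\max}}\Psi(\mathcal{R}^{\rho}(t))\,\mathrm{d}t\;\ge\;\Psi\!\Big(\tfrac{1}{t_{\max}}\!\int_{0}^{t_{\max}}\!\mathcal{R}^{\rho}(t)\,\mathrm{d}t\Big)=\Psi(\mathcal{R}^{\star}),
\]
the last equality being the constraint just derived. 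Hence the economic cost is bounded below by $t_{\max}\,\Psi(\mathcal{R}^{\star})$, with equality exactly when $\mathcal{R}^{\rho}(t)\equiv\mathcal{R}^{\star}$ (uniquely so if $\mathfrak{C}$ is strictly convex; otherwise the constant policy is still a minimizer). This constant policy is admissible provided $\mathcal{R}^{\star}\le\mathcal{R}_{0}$, i.e. $\rho(t)\equiv c/\mathcal{R}^{\star}\ge1$, the regime of interest; plugging $\mathcal{R}^{\rho}(t)\equiv\mathcal{R}^{\star}$ into $\rho(t)=c/\mathcal{R}^{\rho}(t)$ yields precisely the stated formula for $\rho(t)$.

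The hard part is not the convexity/Jensen core, which is routine, but the two structural checks around it: (i) arguing that under the stated approximation the $\widetilde I$-dynamics genuinely decouple from the rest of the system, so that $\mathcal{R}^{\rho}(\cdot)$ can be treated as a free scalar control subject only to the averaged terminal condition; and (ii) verifying feasibility of the optimal constant control, namely that $\rho\equiv c/\mathcal{R}^{\star}\ge1$ (and, implicitly, that this keeps the healthcare system away from saturation so that the cost model of Section~\ref{subsec:model} applies). Once these are settled, the displayed chain of inequalities completes the proof.
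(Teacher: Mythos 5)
Your proposal is correct and follows essentially the same route as the paper's proof: reduce to the scalar edge-perspective ODE for $\widetilde I$, convert the terminal condition into the mean-value constraint on $\mathcal{R}^{\rho}$, observe that the running cost is a convex function of $\mathcal{R}^{\rho}$ (composition of the convex nondecreasing $\mathfrak{C}$ with the convex map $x\mapsto c/x$), and conclude by Jensen. Your added checks (explicit justification of the convexity of the composition and feasibility $\rho\equiv c/\mathcal{R}^{\star}\ge 1$) are points the paper leaves implicit, but the argument is the same.
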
}}
	\begin{proof}
		Consider Eq. (\ref{eq:edgeinf}) and note that the unique solution of the associated Cauchy problem with initial condition   $\widetilde{I}(0)$ is given by:
		$
		\widetilde{I}(t)=\widetilde{I}(0) \exp\left(\gamma \int_0^t \left(\mathcal{R}^\rho(\tau)- 1 \right) \mathrm{d}\tau\right).
		$
		Imposing the constraint $\widetilde{I}(t_{\max})=\widetilde{I}^{\star}$ leads to:
		\begin{equation}\label{eq:constraint_r_eff}\frac{1}{t_{\max}} \int_0^{t_{\max}} {\mathcal{R}}^\rho(\tau)   \mathrm{d}\tau  =1+\frac{1}{\gamma \, t_{\max}}\log \left(\widetilde{I}^{\star}/\widetilde{I}(0)\right).
		\end{equation}
		Now, focusing on a generic  trajectory satisfying \eqref{eq:constraint_r_eff},  we have:
		%\begin{align*}
		$\frac{1}{t_{\max}} \int_0^{t_{\max}} \mathfrak{C}( \rho(\tau)) \mathrm{d} \tau=\frac{1}{t_{\max}} \int_0^{t_{\max}} \mathfrak{\chi}( \mathcal{R}^\rho(\tau)) \mathrm{d} \tau$
		%\end{align*}
		with $\chi=\mathfrak{C}\circ \rho$, and $\rho(\mathcal{R}^\rho)=\frac{\sigma  \mathbb{E}[r^2] }{\gamma \mathbb{E}[r]{R}^ {\rho}(t)}$. Since $\mathfrak{C}$ is a monotonic increasing and convex
		%concave 
		function in $\rho\in[1,+\infty]$
		%  \frac{\sigma  \beta }{\rho(t) \gamma}
		then $\chi $ is a convex function  over its domain,
		%in $\mathbb{R}$, 
		and by Jensen inequality, we conclude
		%\begin{align*}
		$\frac{1}{t_{\max}} \int_0^{t_{\max}} \mathfrak{\chi}( \mathcal{R}^\rho(\tau)) \mathrm{d} \tau
		\ge % \mathfrak{C}
		\chi \left(\frac{1}{t_{\max}} \int_0^{t_{\max}}  \mathcal{R}^\rho(\tau)   \mathrm{d}\tau \right)
		$
		%&= \mathfrak{C}\left(1+\frac{1}{\gamma T}\log \left(\widetilde{I}^{\star}/\widetilde{I}_0\right)\right).
		Therefore, from \eqref{eq:constraint_r_eff} the choice given by $\rho(t)= \frac{\sigma\mathbb{E}[r^2]}{\gamma\mathbb{E}[r]}\left[1+\frac{1}{\gamma t_{\max}}\log \left(\widetilde{I}^{\star}/\widetilde{I}(0)\right)\right]^{-1}$, 	$\forall t\in[0,t_{\max}]$
		minimizes the cost.
	\end{proof}
	Observe that  the economic cost  of previously defined optimal policy
	monotonically decreases while  increasing the target~$\widetilde{I}^{\star}$.
	\begin{corollary}
		Under the assumptions that $\mathfrak{C}(\rho)$ is a monotone increasing and convex function and $ S_{r,p}(t)\approx Nf_{r,p}$, 
		among all control strategies  that maintain the number of infected less than 
		or equal the initial value $\widetilde I(0)$, the overall economic cost is minimized when 
		$\mathcal{R}^\rho(t)$ is kept equal to 1. 
	\end{corollary}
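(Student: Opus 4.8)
The plan is to recycle the integral identity and the Jensen-inequality step already contained in the proof of Proposition~\ref{prop:minimizing_cost}, now applied to the terminal value $\widetilde{I}(t_{\max})$ rather than to a prescribed target. First I would note that any control strategy keeping $\widetilde I(t)\le \widetilde I(0)$ on all of $[0,t_{\max}]$ satisfies, in particular, $\widetilde I(t_{\max})\le \widetilde I(0)$, hence $\log\!\big(\widetilde I(t_{\max})/\widetilde I(0)\big)\le 0$. Inserting this into the identity obtained exactly as in \eqref{eq:constraint_r_eff},
\[
\frac{1}{t_{\max}}\int_0^{t_{\max}}\mathcal{R}^{\rho}(\tau)\,\mathrm{d}\tau = 1 + \frac{1}{\gamma t_{\max}}\log\!\left(\widetilde I(t_{\max})/\widetilde I(0)\right),
\]
I would conclude that the time-average of $\mathcal{R}^{\rho}(\cdot)$ over $[0,t_{\max}]$ is at most $1$.

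Next, as in the proof of Proposition~\ref{prop:minimizing_cost}, I would rewrite the economic cost as $\frac{1}{t_{\max}}\int_0^{t_{\max}}\chi(\mathcal{R}^{\rho}(\tau))\,\mathrm{d}\tau$ with $\chi=\mathfrak{C}\circ\rho$ and $\rho(\mathcal{R})=\frac{\sigma\,\mathbb{E}[r^2]}{\gamma\,\mathbb{E}[r]\,\mathcal{R}}$. Since $\mathfrak{C}$ is convex and monotone increasing and $\mathcal{R}\mapsto\rho(\mathcal{R})$ is positive, convex and decreasing, the composition $\chi$ is convex \emph{and} monotone decreasing in $\mathcal{R}$. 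By Jensen's inequality the cost is bounded below by $\chi$ evaluated at the time-average of $\mathcal{R}^{\rho}(\cdot)$; and because $\chi$ is decreasing while that average is $\le 1$, this lower bound is in turn at least $\chi(1)$. Finally I would exhibit the constant policy $\mathcal{R}^{\rho}(t)\equiv 1$: it gives $\dot{\widetilde I}(t)=0$, so $\widetilde I(t)=\widetilde I(0)$ for all $t$ (hence it is admissible), and its cost equals precisely $\chi(1)$. Thus the lower bound $\chi(1)$ is attained, and keeping $\mathcal{R}^{\rho}(t)=1$ minimizes the economic cost, which corresponds to $\rho(t)\equiv\frac{\sigma\,\mathbb{E}[r^2]}{\gamma\,\mathbb{E}[r]}$.

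\textbf{Main obstacle.} The only real subtlety is bookkeeping on monotonicity and domains, so that all inequalities point the same way: $\chi$ must be simultaneously convex (to use Jensen in the $\ge$ direction) and decreasing (so that capping the average of $\mathcal{R}^{\rho}$ from above forces the cost up to at least $\chi(1)$). Both properties follow from $\rho\mapsto\mathcal{R}^{\rho}$ being an order-reversing map, but I would also want to check that the relevant values of $\rho$ remain in $[1,\infty)$, where convexity of $\mathfrak{C}$ is assumed — i.e. that $\mathcal{R}_0=\frac{\sigma}{\gamma}\mathbb{E}[r^2]/\mathbb{E}[r]\ge 1$, so that $\mathcal{R}^{\rho}=1$ is achievable with an admissible $\rho\ge 1$. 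An alternative, essentially equivalent route is to invoke Proposition~\ref{prop:minimizing_cost} with target $\widetilde I^{\star}=\widetilde I(0)$ together with the remark that the optimal cost decreases in $\widetilde I^{\star}$, but the direct argument above is cleaner because it handles trajectories that merely stay \emph{below} $\widetilde I(0)$ rather than hitting a prescribed endpoint.
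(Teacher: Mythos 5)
Your proof is correct, and it takes a genuinely more self-contained route than the paper. The paper's own proof is a two-line reduction: it invokes Proposition~\ref{prop:minimizing_cost} with target $\widetilde I^{\star}=\widetilde I(0)$ and then observes that the resulting constant policy keeps $\widetilde I(t)=\widetilde I(0)$ and is therefore admissible. Strictly speaking, that argument only compares the constant policy against strategies whose \emph{endpoint} equals $\widetilde I(0)$ exactly; to rule out strategies that end strictly below $\widetilde I(0)$ the paper implicitly leans on the remark, stated just before the corollary, that the optimal cost is monotone decreasing in the target $\widetilde I^{\star}$. Your direct argument closes that gap cleanly: from $\widetilde I(t_{\max})\le\widetilde I(0)$ you get that the time-average of $\mathcal{R}^{\rho}$ is at most $1$, and then Jensen plus the monotone decrease of $\chi=\mathfrak{C}\circ\rho$ in $\mathcal{R}$ pins the cost below by $\chi(1)$, which the constant policy attains. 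The two extra observations you make — that $\chi$ must be both convex and decreasing for the inequalities to chain correctly, and that $\mathcal{R}_0\ge 1$ is needed so that $\mathcal{R}^{\rho}=1$ corresponds to an admissible $\rho=\mathcal{R}_0\in[1,\infty)$ — are exactly the right sanity checks; the paper leaves both implicit. In short, the paper's proof buys brevity by reusing Proposition~\ref{prop:minimizing_cost}, while yours buys completeness on the set of admissible competitors; you correctly identify the paper's route as the ``alternative'' at the end of your write-up.
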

	\begin{proof}
		From Proposition \ref{prop:minimizing_cost} we have that among all strategies 
		guaranteeing $\widetilde I( t_{\max})=\widetilde I(0)$, the one  forcing  $\mathcal{R}^{\rho}(t)=1$ is cost-optimal. The proof is completed by observing that  such a strategy guarantees  $\widetilde I(t) \le \widetilde I(0)$ for every $t\in [0,t_{\max}]$.
	\end{proof}
	\begin{remark}
		$\mathcal{R}^\rho(t)=1$  can be achieved by controlling the rate 
		of  new infections and maintaining it equal to the target   $\lambda_C=\gamma \widetilde I(0) \mathbb{E}[r]/\mathbb{E}[r^2]$. 
		The resulting control function is
		$
		\rho(t)={ {\lambda}_{\textcolor{black}{U}}(t-\varepsilon)}/{\lambda_C}=
		\lambda(t-\epsilon)\rho(t-\varepsilon)/\lambda_C, 
		$
		where $\varepsilon$ is an arbitrarily small positive constant.
	\end{remark}

	In conclusion,
	given an initial condition $\widetilde I(0)$, a maximum allowable number of infected contacts $\widetilde I^{\star}$ and a time horizon $t_{\max}$, if the goal is to keep $\widetilde I(t)\le \widetilde I^{\star} \quad \forall t\in [t^{\star},t_{\max})$, with $t^{\star}$ as small as possible,
	the following strategy appears to be  
	the natural answer:  if  $\widetilde I^{\star}>\widetilde I(0)$,  set $\mathcal{R}^{\rho}(t)=1+\frac{1}{\gamma t_{\max}}\log \left(\widetilde{I}^{\star}/\widetilde{I}(0)\right),\forall t\in[0,t_{\max}].$
	This strategy, indeed, minimizes  the economic cost in $[0,t_{\max}]$, among all strategies that guarantee  
	$\widetilde I(t) \le \widetilde I^{\star},\forall t\in[0,t_{\max}]$, (i.e., $t^{\star}=0$). 
	If, instead, $\widetilde I^{\star}<\widetilde I(0)$, we can not guarantee $t^{\star}=0$, and therefore  to minimize $t^{\star}$ 
	it is necessary to minimize  $\mathcal{R}^{\rho}(t)$   in $[0, t^{\star})$ and then to set  $\mathcal{R}^{\rho}(t)=1 ,\forall t\in [t^{\star},t_{\max}]$.
	Indeed, this is the strategy that minimizes the economic cost  in $[0,t_{\max}]$, among all strategies  minimizing $t^{\star}$.
	Previous arguments can be formalized in the following proposition.
	\begin{proposition}\label{prop:2}
		Given $\widetilde I(0)$, $\widetilde I^{\star}$ and $t_{\max}$,
		whenever our goal is to keep $\widetilde I(t)\le \widetilde I^{\star} , \forall t\in [t^{\star},t_{\max})$, with $t^{\star}$ as small as possible, the  strategy described above is cost-optimal.
	\end{proposition}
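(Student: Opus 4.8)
The plan is to derive the statement from Proposition~\ref{prop:minimizing_cost} and the Corollary that follows it, applied on suitably chosen sub-intervals, after establishing a rigidity property of the phase in which $\widetilde{I}$ must be forced down. Throughout I work under the standing approximation $S_{r,p}(t)\approx Nf_{r,p}$ of Proposition~\ref{prop:minimizing_cost}, so that along any admissible trajectory $\dot{\widetilde{I}}(t)=\gamma(\mathcal{R}^\rho(t)-1)\widetilde{I}(t)$ with $\mathcal{R}^\rho(t)=\mathcal{R}_0/\rho(t)$, and I let admissible controls satisfy $\rho(t)\in[1,\rho_{\max}]$ (the upper bound encoding the fact that lockdown intensity cannot be made arbitrarily large), so that $\mathcal{R}^\rho(t)\ge \mathcal{R}_{\min}:=\mathcal{R}_0/\rho_{\max}$. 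I proceed by the two cases $\widetilde{I}^\star\ge\widetilde{I}(0)$ and $\widetilde{I}^\star<\widetilde{I}(0)$, and in the latter I assume the natural feasibility conditions $\mathcal{R}_{\min}<1$ and that the minimal switching time introduced below does not exceed $t_{\max}$.

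First I would dispose of the case $\widetilde{I}^\star\ge\widetilde{I}(0)$, in which $t^\star=0$ is attainable. The described strategy keeps $\mathcal{R}^\rho$ constant, equal to $1+\frac{1}{\gamma t_{\max}}\log(\widetilde{I}^\star/\widetilde{I}(0))\ge 1$, so $\widetilde{I}$ increases monotonically from $\widetilde{I}(0)$ to $\widetilde{I}^\star$ and hence $\widetilde{I}(t)\le\widetilde{I}^\star$ on all of $[0,t_{\max}]$; it is thus admissible with $t^\star=0$. For optimality, take any admissible control with $t^\star=0$ and put $v:=\widetilde{I}(t_{\max})\le\widetilde{I}^\star$: Proposition~\ref{prop:minimizing_cost} applied with target $v$ lower-bounds its cost by that of the constant-$\mathcal{R}^\rho$ trajectory ending at $v$, and by the observation following Proposition~\ref{prop:minimizing_cost} this optimal cost is non-increasing in the target, hence it is at least the cost of the described strategy, which ends at $\widetilde{I}^\star\ge v$. (If the prescribed $\mathcal{R}^\rho$ exceeds $\mathcal{R}_0$, the target is so loose that $\rho\equiv1$ already keeps $\widetilde{I}\le\widetilde{I}^\star$, and doing nothing is trivially optimal.)

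The substantive case is $\widetilde{I}^\star<\widetilde{I}(0)$, where necessarily $t^\star>0$ and the \emph{primary} objective is to minimize $t^\star$. Integrating $\dot{\widetilde{I}}=\gamma(\mathcal{R}^\rho-1)\widetilde{I}$ and using $\mathcal{R}^\rho(\tau)\ge\mathcal{R}_{\min}$ gives $\widetilde{I}(t)\ge\widetilde{I}(0)\exp(\gamma(\mathcal{R}_{\min}-1)t)$ for every admissible control, so $\widetilde{I}$ cannot reach $\widetilde{I}^\star$ before $t^\star_{\min}:=\frac{\log(\widetilde{I}(0)/\widetilde{I}^\star)}{\gamma(1-\mathcal{R}_{\min})}$, a value attained by the maximal-intervention control $\rho\equiv\rho_{\max}$ on $[0,t^\star_{\min}]$. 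The key step is rigidity: any admissible control with $t^\star=t^\star_{\min}$ must have $\widetilde{I}(t^\star_{\min})\le\widetilde{I}^\star$, i.e.\ $\int_0^{t^\star_{\min}}(\mathcal{R}^\rho(\tau)-1)\,\mathrm{d}\tau\le\frac{1}{\gamma}\log(\widetilde{I}^\star/\widetilde{I}(0))=(\mathcal{R}_{\min}-1)\,t^\star_{\min}$; combined with the pointwise bound $\mathcal{R}^\rho(\tau)\ge\mathcal{R}_{\min}$ this forces $\mathcal{R}^\rho(\tau)=\mathcal{R}_{\min}$ (equivalently $\rho(\tau)=\rho_{\max}$) for a.e.\ $\tau\in[0,t^\star_{\min}]$, and in particular $\widetilde{I}(t^\star_{\min})=\widetilde{I}^\star$. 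Hence, among strategies attaining the minimal $t^\star$, the control on $[0,t^\star_{\min}]$ — and therefore the cost $t^\star_{\min}\,\mathfrak{C}(\rho_{\max})$ incurred there — is completely determined.

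It then remains to minimize the cost on $[t^\star_{\min},t_{\max}]$ subject to $\widetilde{I}(t^\star_{\min})=\widetilde{I}^\star$ and $\widetilde{I}(t)\le\widetilde{I}^\star$ throughout; by time-translation invariance of the reduced dynamics this is exactly the Corollary to Proposition~\ref{prop:minimizing_cost} applied on $[t^\star_{\min},t_{\max}]$ with initial value $\widetilde{I}^\star$, so the cost there is minimized by keeping $\mathcal{R}^\rho(t)=1$ (which also holds $\widetilde{I}(t)\equiv\widetilde{I}^\star$ and so respects the constraint). Adding the forced cost on $[0,t^\star_{\min}]$ to the minimal cost on $[t^\star_{\min},t_{\max}]$ shows the described two-phase strategy is cost-optimal among all strategies achieving the minimal $t^\star$, which is the assertion. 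The only genuinely new ingredient, and the step I expect to need care, is the rigidity argument showing that attaining $t^\star_{\min}$ pins down $\rho$ almost everywhere on the entire decreasing phase; the rest is a combination of Proposition~\ref{prop:minimizing_cost}, its Corollary, and the monotonicity observation, modulo the standard feasibility caveats ($\mathcal{R}_{\min}<1$, $t^\star_{\min}\le t_{\max}$, and $\mathcal{R}_0\ge1$ so that $\mathcal{R}^\rho=1$ is itself achievable).
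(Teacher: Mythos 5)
Your argument is correct and follows the same two-case decomposition the paper sketches in the paragraph preceding this proposition (constant $\mathcal{R}^{\rho}$ via Proposition~\ref{prop:minimizing_cost} and the target-monotonicity observation when $\widetilde I^{\star}\ge\widetilde I(0)$; maximal suppression followed by $\mathcal{R}^{\rho}=1$ and the Corollary when $\widetilde I^{\star}<\widetilde I(0)$) — the paper in fact offers no formal proof beyond that informal paragraph. Your explicit admissible set $[1,\rho_{\max}]$ and the rigidity argument pinning $\rho=\rho_{\max}$ a.e.\ on the suppression phase are exactly the details needed to make the paper's claim rigorous, and both are sound.
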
	
					
\subsection{Rate Control with feedback delay}

	Policymakers cannot instantaneously react to changes in the rate of new infections due to several reasons:
	i) new infections are discovered by tests performed several days after infection, and high-risk individuals are more likely to undergo testing \cite{9444780}, ii) new regulations take time to be introduced and become effective, iii) decisions are based on trends obtained by averaging epidemiological curves, iv) the actual process of new infections in unknown (think of asymptomatic but infectious people). Consequently, the measured process is a delayed, noisy subsample of the actual process.
	Therefore, we consider the case in which the actual, instantaneous effectiveness of mobility restrictions, modeled by $\rho(t)$, is  given by:
	$\rho(t) = \max\left\{1,\frac{\int \mathfrak{f}_d(\tau) {\lambda}_{\textcolor{black}{U}}(t-\tau) \diff \tau}{\lambda_C}\right\}$ where  $\mathfrak{f}_d(\cdot)$ is a feedback delay distribution.  
					
	One of our main results is that the system becomes unstable if the feedback delay is too large with respect to $1/\gamma$ (the average time in the infectious state). To simplify the analytical derivations, we start with 
	the case of deterministic feedback delay of constant 
	duration $d$ (days). Then we extend the result to a delay distribution $\mathfrak{f}_d$.
	
	\begin{theorem}[Stability analysis with constant delay]
		Assume $$\rho(t) = \max\left\{1,\frac{ {\lambda}_{{U}}(t-d)}{\lambda_C}\right\}=\max\left\{1,\frac{ {\lambda}(t-d)\rho(t-d)}{\lambda_C}\right\}$$ 
		and $S_{r,p}(t) \approx N f_{r,p}$.
		If the delay $d<\frac{\pi}{2}\gamma$ then the system is locally stable, otherwise 
		the system is unstable. 
	\end{theorem}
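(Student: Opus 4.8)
The plan is to linearize the closed-loop dynamics about the target equilibrium, reduce them to a single scalar linear delay differential equation (DDE), and then locate the roots of its characteristic equation; the threshold on $d$ will come out exactly where a conjugate pair of roots crosses the imaginary axis. First I would set up the reduction. We work in the interesting regime $\mathcal{R}_0>1$, so that at the target equilibrium restrictions are strictly active, $\rho=\mathcal{R}_0>1$, and the outer $\max\{1,\cdot\}$ is inactive in a neighbourhood; hence locally $\rho(t)=\lambda_U(t-d)/\lambda_C$. Under the large-population approximation $S_{r,p}(t)\approx Nf_{r,p}$ one has $\lambda_U(t)=\sigma\,\widetilde I(t)$, so $\rho(t)=\sigma\,\widetilde I(t-d)/\lambda_C$ and \eqref{eq:edgeinf} becomes the autonomous scalar DDE
\[
\dot{\widetilde I}(t)=\gamma\Big(\frac{\mathcal{R}_0\lambda_C}{\sigma\,\widetilde I(t-d)}-1\Big)\widetilde I(t),
\]
whose unique positive equilibrium is $\widetilde I^\star=\mathcal{R}_0\lambda_C/\sigma=\lambda_C\,\mathbb{E}[r^2]/(\gamma\mathbb{E}[r])$, matching the value $\widetilde I(0)$ for which $\lambda_C$ was calibrated in the preceding Remark. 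Writing $\widetilde I(t)=\widetilde I^\star(1+x(t))$ and using $1/(1+x(t-d))=1-x(t-d)+o(x)$ yields, to first order, the linearized equation $\dot x(t)=-\gamma\,x(t-d)$.

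Next I would analyze its characteristic function $h(z)=z+\gamma e^{-zd}$; the rescaling $w=zd$ turns it into the canonical equation $w+(\gamma d)e^{-w}=0$ associated with $\dot x=-\alpha x(t-1)$, $\alpha=\gamma d$. A purely imaginary root $z=\mathrm{i}b$ forces $\cos(bd)=0$ and $b=\gamma\sin(bd)$; the smallest positive solution is $bd=\pi/2$, $b=\gamma$, so the critical delay $d^\star$ satisfies $\gamma d^\star=\pi/2$. For $d<d^\star$ I would argue that every root of $h$ lies in the open left half-plane: at $d=0$ the unique root is $z=-\gamma<0$; any root with $\mathrm{Re}\,z\ge 0$ satisfies $|z|=\gamma e^{-d\,\mathrm{Re}\,z}\le\gamma$, so potential unstable roots remain in a fixed compact set; and by the computation above no root can reach the imaginary axis before $d=d^\star$, so by continuity of the roots in $d$ none crosses into the right half-plane. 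This gives local asymptotic stability of the linearization, hence of the nonlinear DDE by the principle of linearized stability for retarded functional differential equations.

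For $d>d^\star$, implicit differentiation of $h(z)=0$ in the delay gives $\mathrm{d}z/\mathrm{d}d=-z^2/(1+dz)$, whose real part at $z=\mathrm{i}\gamma$, $d=d^\star$ equals $\gamma^2/(1+\pi^2/4)>0$: the crossing at $d=d^\star$ is transversal, a conjugate pair of roots has moved into the right half-plane, and therefore the equilibrium is unstable. Combining the two cases yields exactly the dichotomy in the statement, with the critical delay $\gamma d=\pi/2$.

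I expect the main obstacle to be twofold: (i) rigorously justifying the reduction in the first step --- that near the equilibrium the full compartmental model collapses onto the scalar equation for $\widetilde I$, which relies on the large-population approximation, the inactivity of the saturation $\max$, and the fact that at the level of $\widetilde I$ neither the per-class $I_{r,p}$ nor the downstream compartments $H,T,M,D$ feed back into the dynamics of $\widetilde I$; and (ii) the global spectral claim that \emph{all} of the infinitely many characteristic roots lie in the left half-plane for every $d<d^\star$, which is stronger than merely excluding imaginary-axis roots. I would settle (ii) either by a direct argument-principle root count on a large semicircular contour in the right half-plane, or by invoking the classical stability chart of $\dot x=-\alpha x(t-1)$ (asymptotically stable iff $0<\alpha<\pi/2$), which is precisely the condition $\gamma d<\pi/2$.
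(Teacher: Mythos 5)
Your proposal is correct and follows essentially the same route as the paper: linearize around the equilibrium $\widetilde I^\star=\frac{\lambda_C}{\gamma}\frac{\mathbb{E}[r^2]}{\mathbb{E}[r]}$ to obtain $\dot\eta(t)=-\gamma\,\eta(t-d)$, then locate the roots of the characteristic equation $z+\gamma e^{-zd}=0$, finding the critical crossing at $\gamma d=\pi/2$ (the theorem's ``$d<\frac{\pi}{2}\gamma$'' is evidently a typo for $d<\frac{\pi}{2\gamma}$, which is what both you and the paper's proof actually establish). Your treatment is in fact somewhat more complete than the paper's, which only exhibits solutions of the form $Ae^{bt}\cos(\omega t+\theta)$ and tracks a single root pair, whereas you explicitly confine all potential unstable roots to a compact set, rule out imaginary-axis crossings for $d<d^\star$, and verify transversality of the crossing at $d=d^\star$.
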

	{\color{black}{\begin{proof}
		Since  \textcolor{black}{under the assumption $S_{r,p}(t) \approx N f_{r,p}$,} 
		the equation governing the evolution of the number of infected edges 
		under delayed rate control becomes: 
		\begin{equation}\label{eq:fixdelay}
			\dot{\widetilde{I}}(t) = \frac{\widetilde{I}(t)}{\widetilde{I}(t-d)} 
			\lambda_C \frac{\EX[r^2]}{\EX[r]} - \gamma \widetilde{I}(t)
		\end{equation}
		
		System stability can be analyzed by considering small perturbations around 
		the equilibrium point $\widetilde{I}^* = \frac{\lambda_C}{\gamma} \frac{\EX[r^2]}{\EX[r]}$:
		$ \widetilde{I}(t) = \widetilde{I}^* + \eta(t)$, with $  \eta(t) \ll \widetilde{I}^* $.
		Exploiting the approximation $\frac{1}{1+x} \sim 1-x$, when $x\approx0$, 
		from \equaref{fixdelay} we obtain: 
		\begin{align*}
			\dot{\widetilde{I}}(t) = 
			\gamma \widetilde{I}^*  \frac{1 + \frac{\eta(t)}{\widetilde{I}^*}}
			{1 + \frac{\eta(t-d)}{\widetilde{I}^*}} - \gamma (\widetilde{I}^* + \eta(t))
			& \approx  - \gamma \eta(t-d)  
		\end{align*}
		where we have discarded the second-order term $\eta(t)\eta(t-d)$.
		We end up with the simple differential equation with delay:
		\begin{equation}\label{eq:eta}
			{\dot \eta(t)}= -\gamma\,\eta(t-d)
		\end{equation}
		Taking the Laplace transform $ \mathcal{L}\{\eta(t)\}$ we obtain 
		$\mathcal{L}\{ \eta(t) \}  = \frac{\eta(0)}{s + \gamma e^{-s d}}$.
		Equation \equaref{eta} admits solutions of the form 
		$\eta(t) = A e^{b t} \cos(\omega t + \theta)$
		under the conditions:
		\begin{eqnarray}\label{eq:bomega}
			\begin{cases}
				b = -\gamma e^{-b d} \cos(\omega d) \\
				\omega = \gamma e^{-b d} \sin(\omega d) 
			\end{cases}
		\end{eqnarray}    
		While $A$ and $\theta$ can take any value, i.e., can be used to match desired values of
		$\eta(0)$ and $\eta'(0)$, $b$ and $\omega$ are uniquely determined by the feedback delay $d$.
		Besides the trivial solution $b = \omega = 0$, there exists a stationary solution
		$b = 0$, $\omega = \gamma$ for the special case $d = \frac{\pi}{2 \gamma}$.
		If $d < \frac{\pi}{2 \gamma}$, from the first constraint we have that $b < 0$, corresponding
		to dumped oscillations.
		For $\frac{\pi}{2 \gamma} < d < \frac{3 \pi}{2 \gamma}$, we have instead amplifying oscillations
		($b > 0$). Therefore, $d = \frac{\pi}{2 \gamma}$ is the critical value for stability.
		%The same result can be obtained by studying the poles of \equaref{Hs}, 
		%i.e., the complex numbers $s = b + i\omega$ such that  $s + \gamma e^{-s d} = 0$.
		%Indeed, by so doing we re-obtain the constraints \equaref{bomega}.  
	\end{proof}
	}}
					
	The analysis can be extended to a delay distribution $\mathfrak{f}_d$.
	\begin{theorem}[Stability analysis with delay distribution]
		Assume that 
		$
		\rho(t) = \max\left\{1,{\int \mathfrak{f}_d (\tau){\lambda}_{\textcolor{black}{U}}(t-\tau) \diff \tau}/{\lambda_C}\right\}
		$
		and $S_{r,p}(t) \approx N f_{r,p}$. 
		Let $ \mathcal {Z}=\{z\in\mathbb{C}:  z + \gamma \,\mathfrak{F}_d(z)=0 \}$, where
		$\mathfrak{F}_d(z)$ is the Laplace transform of the delay distribution.
		Then, if $\mathrm{Re}(z)<0$  $\forall z\in \mathcal {Z}$,  the system is locally stable.
	\end{theorem}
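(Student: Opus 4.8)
The plan is to linearize the edge dynamics \eqref{eq:edgeinf} about the control equilibrium and reduce local stability to the location of the roots of the characteristic equation $z+\gamma\,\mathfrak{F}_d(z)=0$, i.e.\ to the set $\mathcal{Z}$, in exact analogy with the constant-delay case (there $\mathfrak{F}_d(z)=e^{-zd}$). First I would note that since $\mathcal{R}_0>1$, near the relevant equilibrium the inner argument of the $\max$ in $\rho(t)$ exceeds $1$, so locally $\rho(t)=\frac{1}{\lambda_C}\int\mathfrak{f}_d(\tau)\,{\lambda}_{U}(t-\tau)\diff\tau$ with ${\lambda}_U(t)=\sigma\widetilde{I}(t)$. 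Substituting into \eqref{eq:edgeinf} and using $\gamma\mathcal{R}_0=\sigma\,\EX[r^2]/\EX[r]$ yields the distributed-delay analogue of \eqref{eq:fixdelay},
\[
\dot{\widetilde{I}}(t)=\lambda_C\frac{\EX[r^2]}{\EX[r]}\,\frac{\widetilde{I}(t)}{\int\mathfrak{f}_d(\tau)\,\widetilde{I}(t-\tau)\diff\tau}-\gamma\,\widetilde{I}(t),
\]
whose equilibrium is again $\widetilde{I}^{*}=\frac{\lambda_C}{\gamma}\frac{\EX[r^2]}{\EX[r]}$ (using $\int\mathfrak{f}_d=1$).

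Next I would set $\widetilde{I}(t)=\widetilde{I}^{*}+\eta(t)$ with $|\eta(t)|\ll\widetilde{I}^{*}$ and repeat the first-order expansion of the previous proof: with $\frac{1}{1+x}\sim1-x$ and after discarding the second-order term $\eta(t)\int\mathfrak{f}_d(\tau)\eta(t-\tau)\diff\tau$, the $\gamma\widetilde{I}^{*}$ and $\gamma\eta(t)$ terms cancel and one is left with the linear distributed-delay equation $\dot{\eta}(t)=-\gamma\int\mathfrak{f}_d(\tau)\,\eta(t-\tau)\diff\tau$. Seeking modal solutions $\eta(t)=e^{zt}$ (the generalization of \eqref{eq:cos}) gives $z=-\gamma\int\mathfrak{f}_d(\tau)e^{-z\tau}\diff\tau=-\gamma\,\mathfrak{F}_d(z)$, so the admissible exponents are exactly the elements of $\mathcal{Z}$; equivalently, the Laplace transform gives $\mathcal{L}\{\eta\}(s)=\eta(0)/(s+\gamma\,\mathfrak{F}_d(s))$, whose poles are the points of $\mathcal{Z}$. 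For $\mathfrak{f}_d=\delta_d$ this recovers \eqref{eq:Hs} and \eqref{eq:bomega}.

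Finally I would invoke the principle of linearized stability for retarded functional differential equations with bounded (or exponentially decaying) distributed delay: the characteristic roots have real parts bounded from above and only finitely many lie in any vertical strip, so if $\mathrm{Re}(z)<0$ for every $z\in\mathcal{Z}$ then in fact $\sup_{z\in\mathcal{Z}}\mathrm{Re}(z)<0$; hence the semigroup generated by $\dot{\eta}=-\gamma(\mathfrak{f}_d*\eta)$ decays exponentially, and the standard linearization theorem transfers this to local asymptotic stability of $\widetilde{I}^{*}$ for the nonlinear dynamics. I expect the main obstacle to be exactly this last step: justifying that the absence of roots in the closed right half-plane yields a \emph{uniform} spectral gap — which requires enough regularity/decay of $\mathfrak{f}_d$ for $\mathfrak{F}_d$ to be analytic on a neighborhood of $\{\mathrm{Re}(z)\ge0\}$, so that the spectral-mapping and stability theory for delay equations applies. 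Once that is granted, everything else is a routine repetition of the constant-delay computation.
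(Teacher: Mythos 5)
Your proposal follows essentially the same route as the paper: linearize around $\widetilde{I}^{*}$ to obtain $\dot{\eta}(t)=-\gamma\int\mathfrak{f}_d(\tau)\eta(t-\tau)\diff\tau$, take the Laplace transform, and read off stability from the zeros of $z+\gamma\,\mathfrak{F}_d(z)$. In fact you are more careful than the paper, which simply asserts the conclusion from the pole locations without addressing the uniform spectral gap or the regularity of $\mathfrak{F}_d$ that you correctly flag as the only delicate step.
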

	{\color{black}{\begin{proof}
				Repeating the same approximations as before for small variations around 
				the equilibrium $\widetilde{I}^*$, we
				obtain the differential equation with delay distribution:
				\begin{equation}\label{eq:etafd}
					{\dot \eta(t)} = -\gamma \int \mathfrak{f}_d(\tau) \eta(t-\tau) \diff \tau
				\end{equation}
				Taking the Laplace transform, we get
				$H(s) = {\eta(0)}/{(s + \gamma \, \mathfrak{F}_d(s))}$.
				Note that when $\mathfrak{f}_d(\tau) = \delta(\tau -d)$,
				we obtain the case with constant delay. %\equaref{Hs}.
				We evince that we need the set of zeros $\mathcal{Z}=\{z\in\mathbb{C}:z + \gamma \, \mathfrak{F}_d(z)=0\}$ to lie in the left half-plane to ensure stability.
			\end{proof}
	}}
	In the following corollaries, whose proof is given in \cite{SM}	we explore two interesting cases of feedback delay distributions. 
	\begin{corollary}[Exponential delay distribution]
		If $\mathfrak{f}_d(\tau) = u(\tau) \delta e^{-\delta (\tau)}$, then 
		the system is always (locally) stable. 
	\end{corollary}
	%\begin{proof}
	%Consider an exponential delay distribution of parameter $\delta$ (mean $1/\delta$).
	%We obtain
	%\begin{equation}\label{eq:es1}
	%\mathcal{L}\{ \eta(t) \}  = \frac{\eta(0)}{s + \frac{\gamma \delta}{s + \delta}}
	%\end{equation} 
	%The poles of \equaref{es1} are the roots of the second-order equation  
	%$s^2 + s \delta +\gamma \delta$. Since the the real part of both roots is negative
	%for any $\delta$, the system is always stable. 
	%For $\frac{1}{\delta} > \frac{1}{4 \gamma}$ it exhibits
	%dumped oscillations, otherwise it exhibits an exponential decay.
	%\end{proof}
					
	\begin{corollary}[Shifted exponential delay distribution]
		Let $\mathfrak{f}_d(\tau) = u(\tau-d) \delta e^{-\delta (\tau-d)}$. 
		For any given $\delta > 0$, there exists a critical delay
		$d^* = \frac{1}{\gamma} f(\delta)$, such that the system
		is (locally) stable if $d < d^*$, otherwise the system is unstable.
		As $\delta$ grows from 0 to $\infty$, $d^*$ grows
		from $1/\gamma$ to $\pi/(2 \gamma)$.
	\end{corollary}
	%\begin{proof}
	%Consider the shifted exponential distribution:
	%$\mathfrak{f}_d(\tau) = u(\tau-d) \delta e^{-\delta (\tau-d)}$.
	%In this case we have
	%\begin{equation}\label{eq:es2}
	%\mathcal{L}\{ \eta(t) \}  = \frac{\eta(0)}{s + \frac{\gamma \delta}{s + \delta}e^{-sd}}
	%\end{equation} 
	%whose poles $z = b + i \omega$ satisfy the equations:
	%\begin{eqnarray}\label{eq:bomegashift}
	%  \begin{cases}
		%      b^2 - \omega^2 + b \delta + \gamma \delta e^{-b d} \cos(\omega d)  = 0\\
		%      2 b \omega + \omega \delta - \gamma \delta e^{-b d} \sin(\omega d) = 0 
		%    \end{cases}\,.
	%\end{eqnarray}   
	%At the critical point, $b = 0$, and above equations reduce to:
	%\begin{eqnarray}\label{eq:redshift}
	%  \begin{cases}
		%      \omega^2 = \gamma \delta \cos(\omega d) \\
		%      \omega = \gamma \sin(\omega d) 
		%    \end{cases}\,.
	%\end{eqnarray}   
	%For given $\delta$, we can solve the above two equations in the unknowns $\omega^{\star}$, $d^{\star}$, obtaining:
	%\begin{eqnarray*}\label{eq:star}
	%  \omega^{\star} = \sqrt{\frac{-\delta^2+\sqrt{\delta^4+4\gamma^2\delta^2}}{2}} ,\quad d^{\star} = \frac{\arcsin(\frac{\omega}{\gamma})}{\omega}.
	%\end{eqnarray*}   
	%
	%  \end{proof}
		
	The shifted exponential distribution can represent a system where: i) an exponentially weighted moving average (with parameter $\delta$) is used to estimate the current trend of the epidemiological curve, ii) some fixed delay $d$ is introduced before the control becomes effective.
	Our results suggest that system stability is crucially tied (by a factor between 1 and $\pi/2$ that  depends on $\delta$) to the mean sojourn time $1/\gamma$ in the infectious state. If $d$ is too large with respect to $1/\gamma$, the control based on the force of infection is prone to instability.
	
	\medskip
	
	In a finite population system, as time goes on, we can no longer assume that $S_{r,p}(t) \approx N f_{r,p}$, since the number of initially susceptible individuals is progressively reduced by the number of people who get infected (see \equaref{dyn_sys}). 
	Moreover, $S_{r,p}(t)$ can vary because of vaccinations and the finite duration of immunity.
	Nevertheless, we can still apply the above results by resorting to a {\it{time-scale separation approach}}, i.e., by assuming that $S_{r,p}(t)$, though not equal to $N f_{r,p}$, are almost constant at the time scale over which we analyze stability.
	
	Indeed, recall from Remark \ref{oss:edge_perspective} that the evolution of the total number of infected edges can be written as:   
	\begin{equation}\label{eq:Itilde}
		\dot{\widetilde{I}}(t) = \gamma \left(\frac{\mathcal{R}(t)}{\rho(t)} -1 \right) \widetilde{I}(t)
	\end{equation}
	where $\mathcal{R}(t) = \frac{\sigma \sum_{r,p} r^2 S_{r,p}(t)}{\gamma \sum_{r,p} r N f_{r,p}}$ \textcolor{black}{is the basic reproduction in the  general case.} %number without the assumption of \lq\lq large population"}.
	This equation is formally identical to (\ref{eq:edgeinf}) upon substituting $\mathcal{R}_0$ with $\mathcal{R}(t)$.
	Since our stability results do not depend on $\mathcal{R}_0$, they apply also to a system in which $\mathcal{R}(t)$ can be considered approximately constant at the time scale at which we analyze the system stability (i.e., time scale of $1/\gamma$).
	
\section{Control on hospitalizations and Intensive therapy occupancy}\label{sec:controlHT}

	Recall that, according to the HT strategy, the control variable $\rho(t)$ is directly related to the instantaneous numbers $H(t)$ and $T(t)$ of patients who are currently hospitalized or under intensive treatment, respectively.
	Many countries have widely adopted this strategy, being particularly simple to implement.

	We assume that Hospitals and ICUs have a maximum capacity {$\widehat H$ and $\widehat T$}, correspondingly.
	A maximum level of restrictions $\rho_{\max}$ is applied whenever either $H(t)$ exceeds $H_{\max}$ (with $ H_{\max} \le \widehat H $),  or $T(t)$ exceeds  $T_{\max}$   ($T_{\max}  \le \widehat T $). 
	When  $H(t)<H_{\max}$ and $T(t)<T_{\max}$,
	we assume that two control functions
	$\rho_H: \mathbb{R}^+\rightarrow[1,\infty)$ and $\rho_T: \mathbb{R}^+\rightarrow[1,\infty)$
	provide two different levels of restrictions, the larger (i.e. stricter) of which is 
	actually applied: $\rho: = \max\{\rho_H\circ H,\rho_T\circ T\}$.
	{\begin{ass}\label{ass:rho}
			Let $\rho_H \in C^1[0,H_{\max}]$,  $\rho_T\in C^1[0,T_{\max}]$  such that $\rho_H(0)=\rho_T(0)=1$,  $\rho_H(H_{\max})=\rho_T(T_{\max})=\rho_{\max}$, with $\inf_{x\in(0,H_{\max} ) } \dot\rho_H(x)>0$ and  $\inf_{x\in(0,T_{\max})} \dot\rho_T(x)>0$.
	\end{ass}}
	
	To analyze the system stability under the above type of control,
	we first  assume %consider initial conditions in which 
	$S_{r,p}(t) \approx N f_{r,p}$. % can be considered constant (almost infinite initial population).
	We will later extend the analysis to the general case
	through a time-scale separation approach.
	{{
	Under the assumption $S_{r,p}(t) \approx N f_{r,p}$ we have that the total number of infected \lq edges' is governed by \eqref{eq:edgeinf}.
	\begin{proposition}[Stationary solutions]\label{prop:equil}
	Under the assumption $S_{r,p}(t) \approx N f_{r,p}$ and Assumption \ref{ass:rho} the stationary solutions satisfy:
	\begin{align}\label{eq:equil}
		H^*  = \frac{\gamma}{\phi} \widetilde{I}^* \frac{\EX[r \, p_{r,p}^{IH}]}{\EX[r^2]}, \quad
		T^*  = \frac{\gamma}{\tau} \widetilde{I}^* \frac{\EX[r \, p_{r,p}^{IH} \, p_{r,p}^{HT}]}{\EX[r^2]} .
	\end{align}
	\end{proposition}
	{\color{black}{\begin{proof}
			From the definition we have 
			$I_{r,p}(t) = \widetilde{I}(t) \frac{r f_{r,p}}{\EX[r^2]},$ $ I(t) = \widetilde{I}(t) \frac{\EX[r]}{\EX[r^2]}.$
			It should be noted that at equilibrium  necessarily $\rho^*(t)={{\mathcal{R}}}_0$ for all $t$ and, by monotonicity of $\rho_H$ and $\rho_T$, we have one of the following cases:
			\begin{itemize} \item $H^*=\rho_H^{-1}({\mathcal{R}}_0), $ and $
				T^*\le \rho_T^{-1}({\mathcal{R}}_0)$;
				\item $T^*= \rho_T^{-1}({\mathcal{R}}_0),\;  H^*\le \rho_H^{-1}({\mathcal{R}}_0)$. 
			\end{itemize} 
			Hence,
			$$
			\widetilde{I}^*=\min\left( \rho_H^{-1}({\mathcal{R}}_0)  \frac{\phi}{\gamma} \frac{\EX[r^2]}{\EX[r \, p_{r,p}^{IH}]},
			\rho_T^{-1}({\mathcal{R}}_0) \frac{\tau}{\gamma} \frac{\EX[r^2]}{\EX[r \, p_{r,p}^{IH} \, p_{r,p}^{HT}]} \right). 
			$$
			Now, from (\ref{eq:dyn_sys}), we obtain detailed equilibrium points: 
			\begin{align*}
				I_{r,p}^*= \widetilde{I}^* \frac{r f_{r,p}}{\EX[r^2]},H_{r,p}^*  = \frac{\gamma}{\phi} I_{r,p}^* \,  p_{r,p}^{IH},
				T_{r,p}^*  = \frac{\phi}{\tau} H_{r,p}^* \, p_{r,p}^{HT} 
			\end{align*} 
			Therefore, summing over $(r,p)$, we get corresponding equilibria for the total 
			number of people hospitalized or under intensive therapy as given by \eqref{eq:equil}.
	\end{proof}}
	\begin{theorem}[Stability analysis]\label{thm:stabilityHT}
		Let $\rho_H$ and $\rho_T$ satisfy Assumption \ref{ass:rho} and $H^*$ and $T^*$ be stationary solutions as given in Proposition \ref{prop:equil}. If at least one of the following conditions is satisfied:
		\begin{itemize}
			\item $\rho_H(H^*) > \rho_T(T^*)$
			\item $\rho_T(T^*) \ge  \rho_H(H^*)$ and $\phi+ \tau \geq \frac{T^* \dot\rho_T(T^*) \gamma}{\mathcal{R}_0}$
		\end{itemize}
		then the system is locally stable.
	\end{theorem}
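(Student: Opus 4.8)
The plan is to collapse the infinite‑population dynamics onto a three‑dimensional system in the aggregate variables $(\widetilde I, H, T)$, linearize around the equilibrium of Proposition~\ref{eq:equil}, and read off local stability from the Routh--Hurwitz criterion. As in the proof of Proposition~\ref{eq:equil}, under $S_{r,p}(t)\approx N f_{r,p}$ the force of infection feeds class $(r,p)$ in proportion to $r f_{r,p}$, so the relations $I_{r,p}(t)=\widetilde I(t)\,r f_{r,p}/\EX[r^2]$ and $H_{r,p}(t)= h(t)\, r\,p_{r,p}^{IH} f_{r,p}$ (and correspondingly for $T_{r,p}$) cut out an invariant manifold on which, summing \eqref{eq:dyn_sys} over $(r,p)$, one obtains the closed system $\dot{\widetilde I}=\gamma\big(\mathcal{R}_0/\rho(t)-1\big)\widetilde I$, $\dot H=a_H\widetilde I-\phi H$, $\dot T=a_T H-\tau T$, with $a_H=\gamma\,\EX[r\,p_{r,p}^{IH}]/\EX[r^2]$, $a_T=\phi\,\EX[r\,p_{r,p}^{IH}p_{r,p}^{HT}]/\EX[r\,p_{r,p}^{IH}]$ and $\rho(t)=\max\{\rho_H(H(t)),\rho_T(T(t))\}$. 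The directions transverse to this manifold relax at rates $-\gamma,-\phi,-\tau$ (the full linearization in the $I_{r,p},H_{r,p},T_{r,p}$ coordinates is block‑triangular with those extra blocks), so local stability of the full system is equivalent to that of the reduced one.

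At equilibrium $\rho^*=\mathcal{R}_0$, while $\rho_H,\rho_T$ are strictly increasing by Assumption~\ref{ass:rho}; hence $\rho_H(H^*)\le\mathcal{R}_0$ and $\rho_T(T^*)\le\mathcal{R}_0$ with equality in at least one, and the two hypotheses of the theorem single out which constraint is active. If $\rho_H(H^*)>\rho_T(T^*)$, then $\rho_H(H^*)=\mathcal{R}_0$ and by continuity $\rho=\rho_H\circ H$ in a neighborhood, so $T$ disappears from the $\widetilde I$‑equation and $(\widetilde I,H)$ forms a closed planar subsystem with Jacobian $\left(\begin{smallmatrix}0&-c\\ a_H&-\phi\end{smallmatrix}\right)$, where $c=\gamma\widetilde I^*\dot\rho_H(H^*)/\mathcal{R}_0>0$; its trace is $-\phi<0$ and its determinant $c\,a_H>0$, so it is Hurwitz, and the remaining equation $\dot T=a_T H-\tau T$ is a stable first‑order filter driven by a vanishing input. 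This disposes of the first case.

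In the complementary regime $\rho_T(T^*)\ge\rho_H(H^*)$ one has $\rho_T(T^*)=\mathcal{R}_0$ and, away from the degenerate equality, $\rho=\rho_T\circ T$ near the equilibrium; the $3\times3$ Jacobian then has characteristic polynomial $s^3+(\phi+\tau)s^2+\phi\tau\,s+c_T a_H a_T=0$ with $c_T=\gamma\widetilde I^*\dot\rho_T(T^*)/\mathcal{R}_0$. Using $a_H a_T=\gamma\phi\,\EX[r\,p_{r,p}^{IH}p_{r,p}^{HT}]/\EX[r^2]$ together with the equilibrium value $T^*=(\gamma/\tau)\widetilde I^*\,\EX[r\,p_{r,p}^{IH}p_{r,p}^{HT}]/\EX[r^2]$ from Proposition~\ref{eq:equil}, the constant term simplifies to $c_T a_H a_T=\gamma\phi\tau\,T^*\dot\rho_T(T^*)/\mathcal{R}_0>0$. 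All coefficients being positive, the Routh--Hurwitz condition for a cubic reduces to $(\phi+\tau)\phi\tau>c_T a_H a_T$, i.e. $\phi+\tau>\gamma\,T^*\dot\rho_T(T^*)/\mathcal{R}_0$, which is exactly the stated inequality.

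The main obstacle is the non‑smoothness of $\rho=\max\{\rho_H\circ H,\rho_T\circ T\}$ precisely on the boundary set $\rho_H(H^*)=\rho_T(T^*)$, where neither of the two linearizations above is by itself valid: there the right‑hand side has a genuine kink and one must handle a switched/Filippov system. I would argue by exhibiting a common quadratic Lyapunov function for the two frozen‑branch linear vector fields — both are Hurwitz under the hypotheses, the ``$H$‑branch'' unconditionally and the ``$T$‑branch'' under the displayed condition — or, equivalently, invoke the fact that selecting the larger of the two controls can only add damping relative to either branch. Besides this, the write‑up should record the non‑degeneracy assumptions implicit in the statement, namely $1<\mathcal{R}_0<\rho_{\max}$ so that $H^*,T^*$ lie in the interior where Assumption~\ref{ass:rho} guarantees $\dot\rho_H(H^*),\dot\rho_T(T^*)>0$, and make precise the transverse‑mode reduction used at the outset; the switching/non‑smoothness point is where the real work lies, the rest being routine linear algebra.
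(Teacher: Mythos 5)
Your proof is correct and follows essentially the same route as the paper: split on which controller is active at the equilibrium, linearize, and read stability off the second-order ($H$-active) or third-order ($T$-active) characteristic polynomial --- your Routh--Hurwitz computation on $s^3+(\phi+\tau)s^2+\phi\tau s+\gamma\phi\tau T^*\dot\rho_T(T^*)/\mathcal{R}_0$ is exactly the paper's analysis of the poles of the corresponding Laplace transforms. The only substantive difference is that you explicitly flag the non-smooth boundary case $\rho_H(H^*)=\rho_T(T^*)$ (which the theorem's ``$\ge$'' includes but the paper's proof silently treats as the strict inequality) and the reduction from the per-class system to the aggregate $(\widetilde I,H,T)$ dynamics, both of which the paper glosses over.
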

	}}
	{\color{black}{
	\begin{proof}
		Let us consider small perturbations around the equilibrium point $\widetilde{I}^*$: 
		$ \widetilde{I}(t) = \widetilde{I}^* + \widetilde{\eta}(t) $ with $  \widetilde{\eta}(t) \ll \widetilde{I}^* $.
		
		We will assume that $0 < H^* < H_{\max}$, and $0 < T^* < T_{\max}$. From Assumption \ref{ass:rho}, by denoting 
		with $\alpha_H^*=\dot{\rho}(H^*)$ and $\alpha_T^*=\dot{\rho}(T^*)$  we have the following cases.
		\begin{enumerate}
			\item{ If $\rho_H(H^*) > \rho_T(T^*)$ by continuity we get that $\rho(t) = \rho_H(H(t)) > \rho_T(T(t))$
			}
			and assuming initial conditions $H(0) = H^*$, $T(0) = T^*$,
			after some algebra we get the Laplace transform of $\eta(t)$:
			{{\begin{equation}\label{eq:iotaH}
						\mathcal{L}\{\eta(t)\} = \frac{\eta(0) (s+\phi)}{s(s+\phi) + 
							\frac{H^* \alpha_H^* \phi \gamma}{\mathcal{R}_0}} 
			\end{equation}}}
			In this case, the system is always stable for any value of parameters {$\phi$}, $\gamma$,$\mathcal{R}_0$,
			since the real part of the poles of (\ref{eq:iotaH}) is always negative.
			As we increase the amplitude of coefficient $\frac{H^* \alpha_H^* \phi \gamma}{R_0}$, the real part of the dominating pole moves from 0 to $-\phi$.  
			
			\item If  $\rho_T(T^*) > \rho_H(H^*)$ then, by continuity, we have $\rho(t) = \rho_T(T(t)) > \rho_H(H(t))$ and, by first order analysis and computing the Laplace transform, we get
			$$ \mathcal{L}\{\eta(t)\} = \frac{\eta(0) (s+\phi)(s+\tau)}{s(s+\phi)(s+\tau)+ 
				\frac{T^* \alpha_T^* \tau \phi \gamma}{\mathcal{R}_0}} $$
			The system  may be unstable since we obtain in the denominator a third-order
			equation whose complex solutions can fall in the positive half-plane. 
			In particular, the system is stable when:
			\begin{equation}\label{eq:stableregion}
				\phi + \tau \geq \frac{T^* \alpha_T^* \gamma}{\mathcal{R}_0}
			\end{equation}
			while it becomes unstable otherwise.
			Indeed, pure imaginary solutions $s = i \omega$ are roots of
			the above third order equation when $\omega = \sqrt{\tau \phi}$, while  
			relation (\ref{eq:stableregion}) is satisfied with equality
		\end{enumerate}
	\end{proof}}}
	Theorem \ref{thm:stabilityHT} provides conditions guaranteeing the local stability of the system. 
	
	In particular, it is worth remarking that once  $H_{\max}< \widehat{H}$ has been fixed, condition $\rho_H(H^*) > \rho_T(T^*)$ can always be achieved by arranging a  sufficiently large number of available intensive therapy facilities.
	Indeed, even when ${\mathcal{R}}_0$ is not perfectly known, it is sufficient to guarantee:
	$$ \rho_H^{-1}(y) \frac{\phi}{\gamma} \frac{\EX[r^2]}{\EX[r \, p_{r,p}^{IH}]}<
	\rho_T^{-1}(y) \frac{\tau}{\gamma} \frac{\EX[r^2]}{\EX[r \, p_{r,p}^{IH} \, p_{r,p}^{HT}]} $$  for
	every $ \rho_{\min}<y<\rho_{\max}$, i.e.
	$ \frac{\rho_T^{-1}(y)   }{ \rho_H^{-1}(y)}> \frac{\phi}{\tau} 
	\frac{\EX[r \, p_{r,p}^{IH} \, p_{r,p}^{HT} ]}{\EX[r \, p_{r,p}^{IH}]}.$   Observe that the above constraint can be met if 
	\begin{equation}
		T_{\max} > \frac{\phi}{\tau} 
		\frac{\EX[r \, p_{r,p}^{IH} \, p_{r,p}^{HT} ]}{\EX[r \, p_{r,p}^{IH}]}{H_{\max}} \label{condICU}
	\end{equation}  by adopting  controllers  that satisfy the relationship: $ \rho_H(xH_{\max})  \ge \rho_T(xT_{\max})\quad \forall \; 0\le x \le 1$.
	
	When the number of intensive therapies is, instead, under-dimensioned,  we have $\rho_H(H^*) > \rho_T(T^*)$, and the system stability essentially depends on the average time spent in hospitals and ICU, through the sum $\phi + \tau$ of transitions rates out of compartments $H$, $T$ (both are equally important). 

	Assuming that $S_{r,p}(t)$ are almost constant on the time scale over which stability is studied, the analysis can be extended by replacing the basic reproduction number ${\mathcal{R}}_0$ with the effective reproduction number ${\mathcal{R}}(t)$.Indeed, by doing so, the evolution of the total number of infected edges \equaref{Itilde} becomes formally identical to (\ref{eq:edgeinf}).

{\color{black}
\section{A comparative analysis with optimal control and homogeneous models}\label{sec:prelimres}
	In this section, we perform a comparative analysis of the proposed model against optimal control and homogeneous models to assess its effectiveness and advantages in addressing the research problem.
	
\subsection{Optimal control versus Rate/HT Control} 
	\begin{figure*}[t]
		\begin{center}
			\begin{tabular}{ccc}
				\includegraphics[width=0.64\columnwidth]{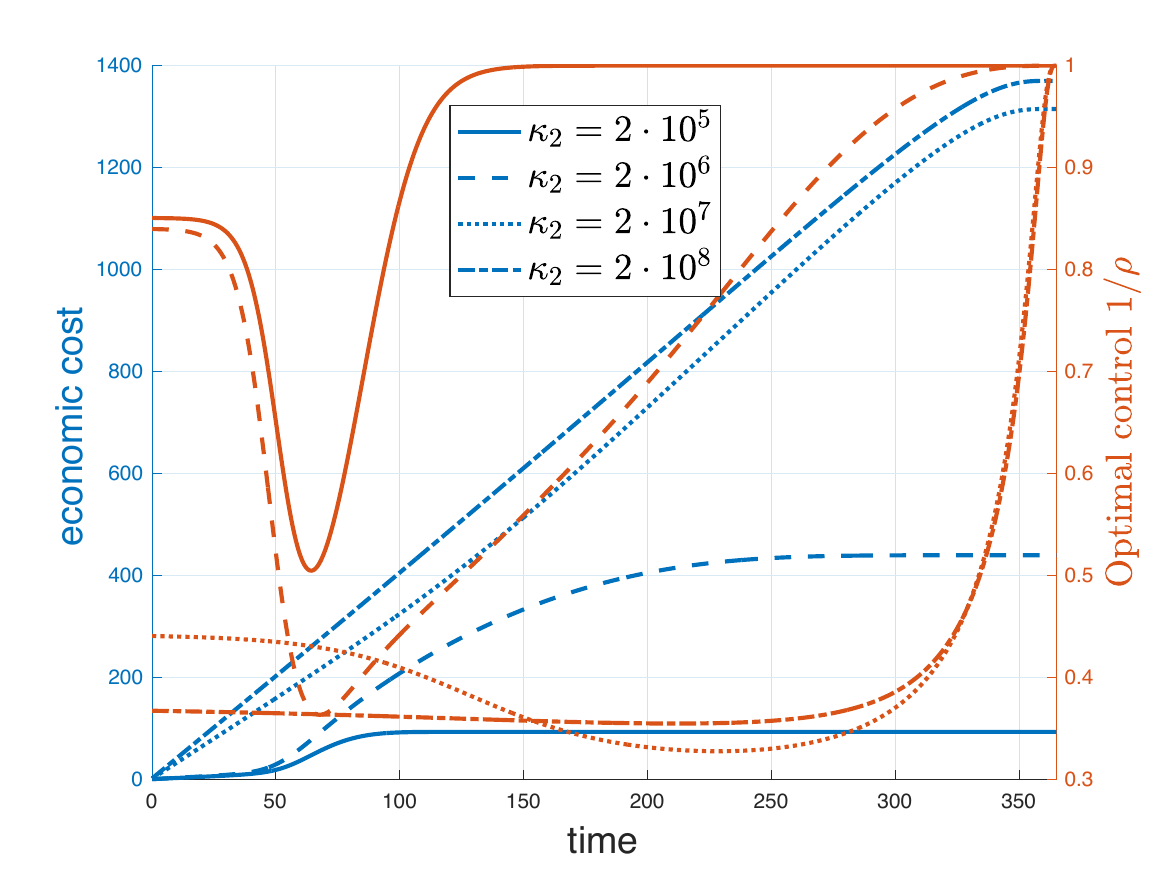}&
				\includegraphics[width=0.64\columnwidth]{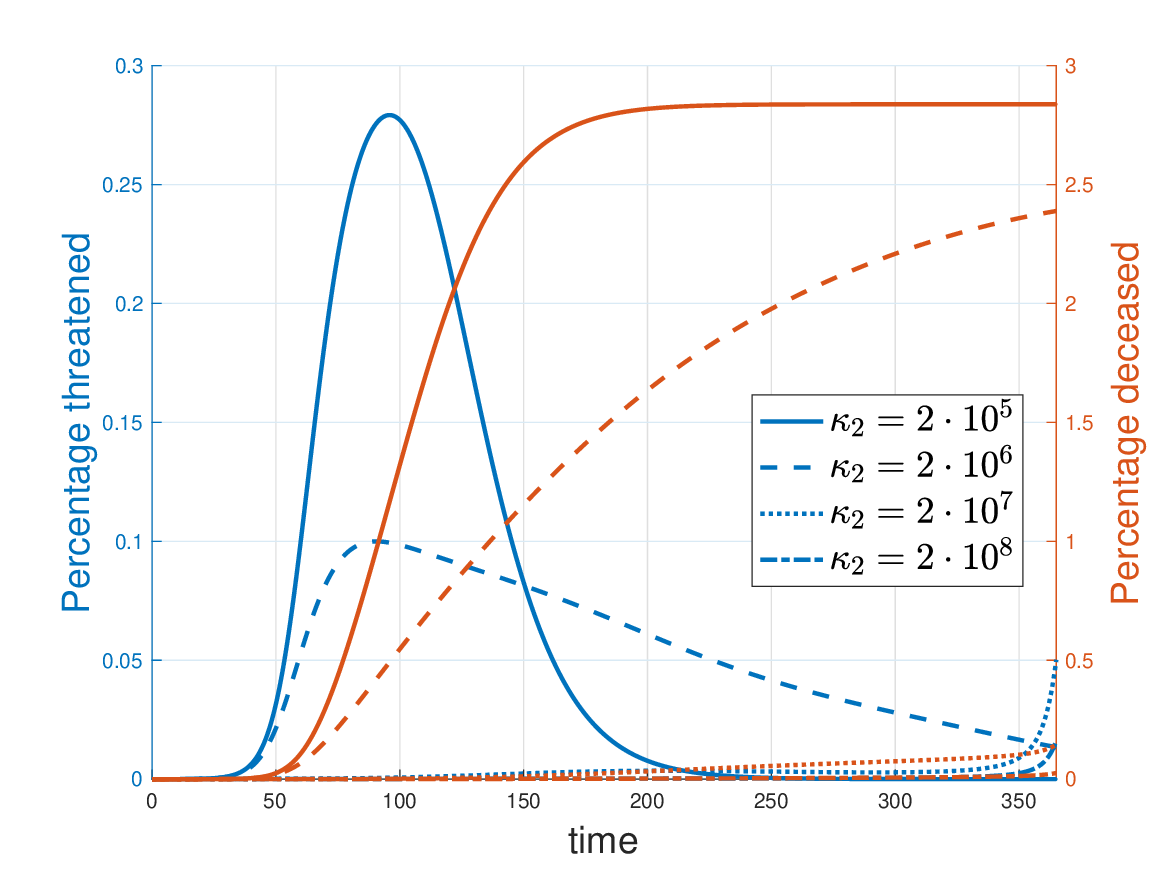}&
				\includegraphics[width=0.64\columnwidth]{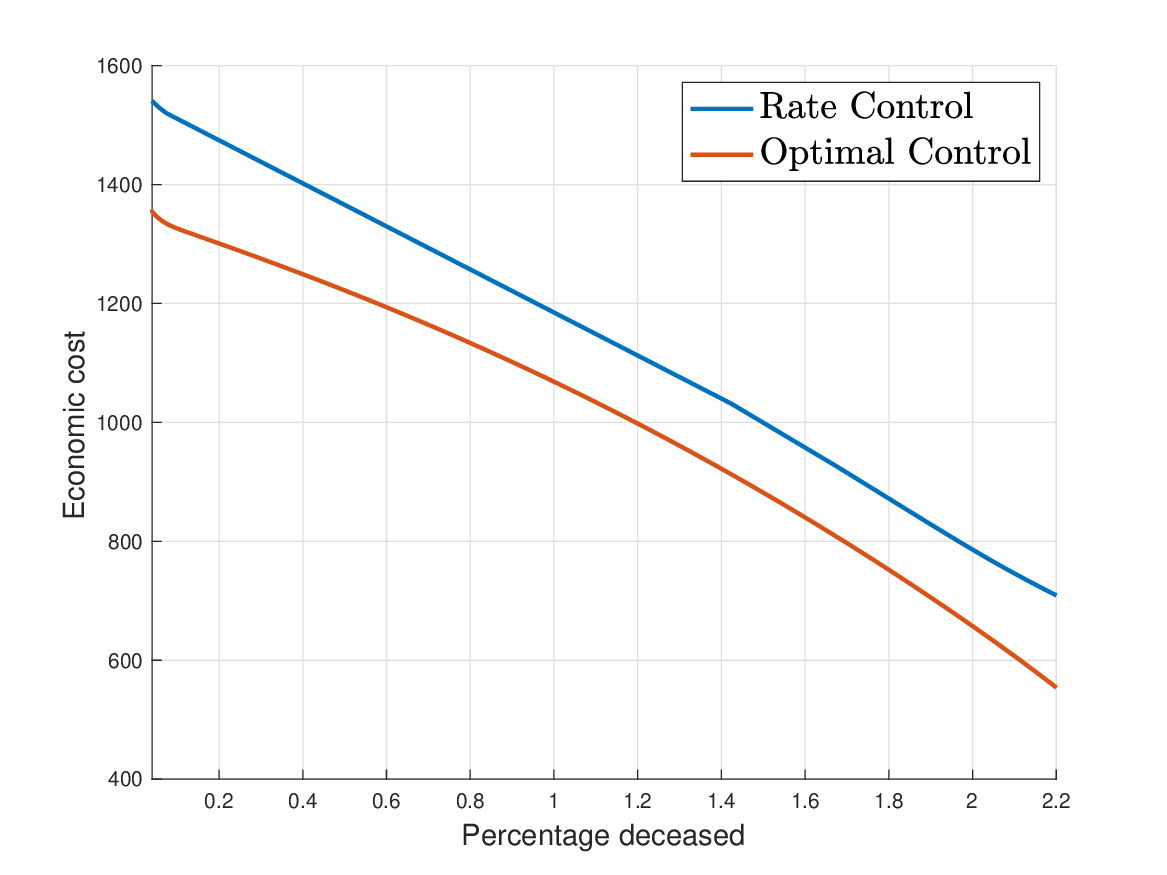}\\
				(a)&(b)&(c)
			\end{tabular}
		\end{center}
		\caption{(a) Economic cost and control effort via optimal control (b) Percentage of threatened and deceased individuals via optimal control implementation. (c) Comparison of optimal control and Rate control  strategies}\label{fig_optimal_control_a}
	\end{figure*}
	\begin{figure*}[h!]\begin{center}
			\begin{tabular}{ccc}
				\includegraphics[width=0.64\columnwidth]{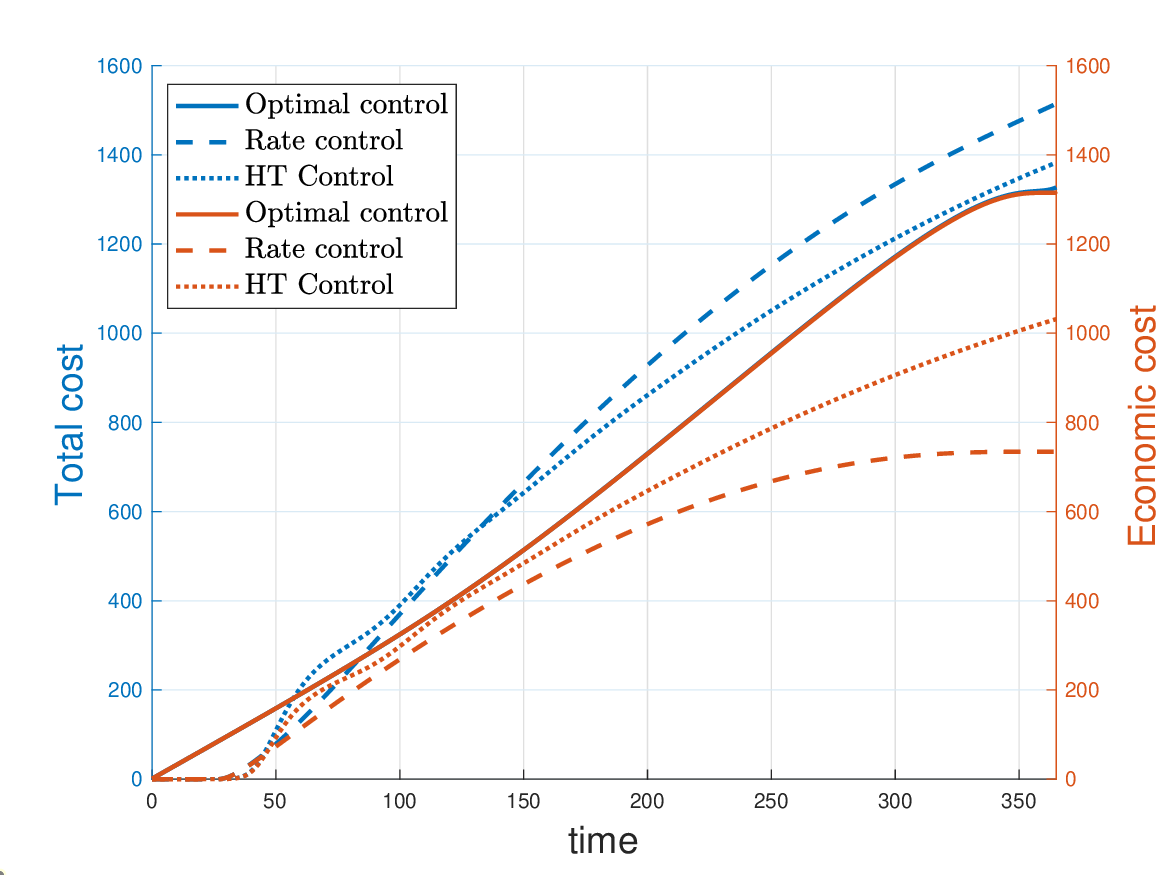}&
				\includegraphics[width=0.64\columnwidth]{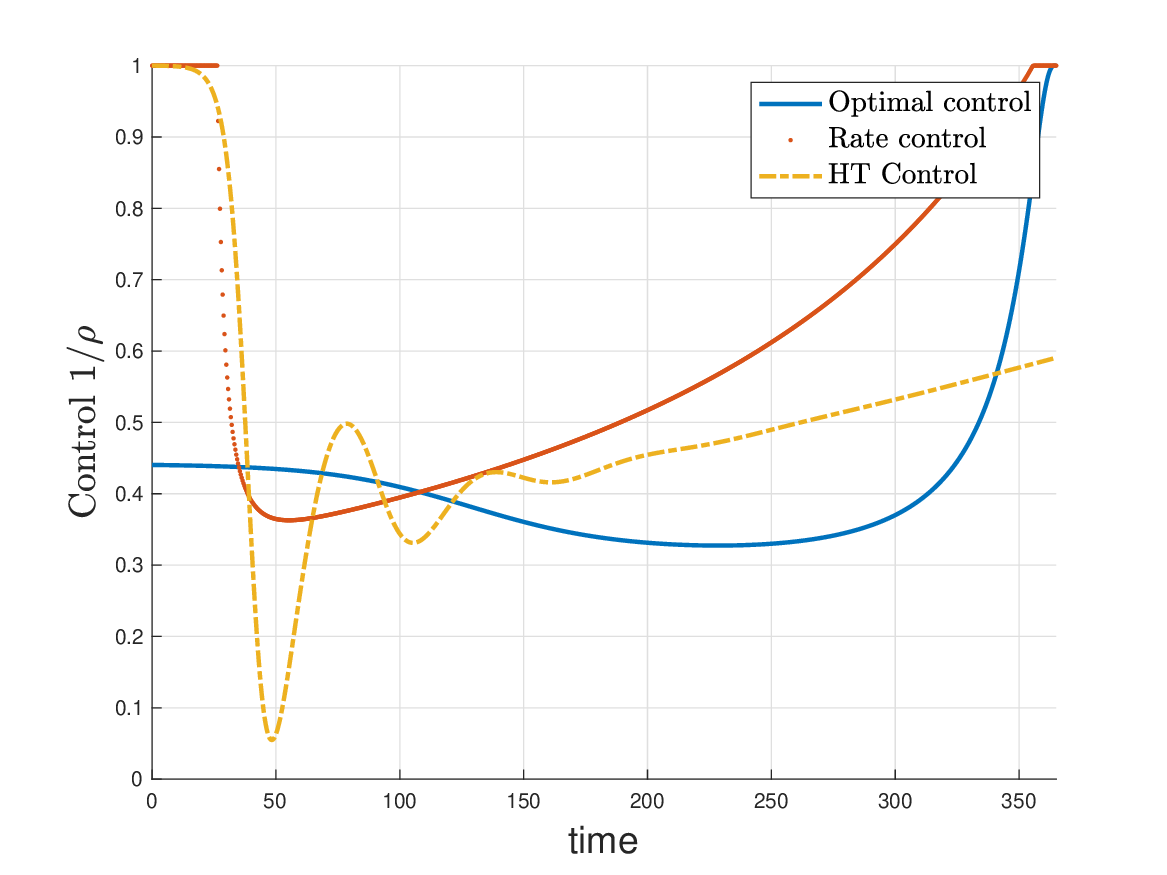}&
				\includegraphics[width=0.64 \columnwidth]{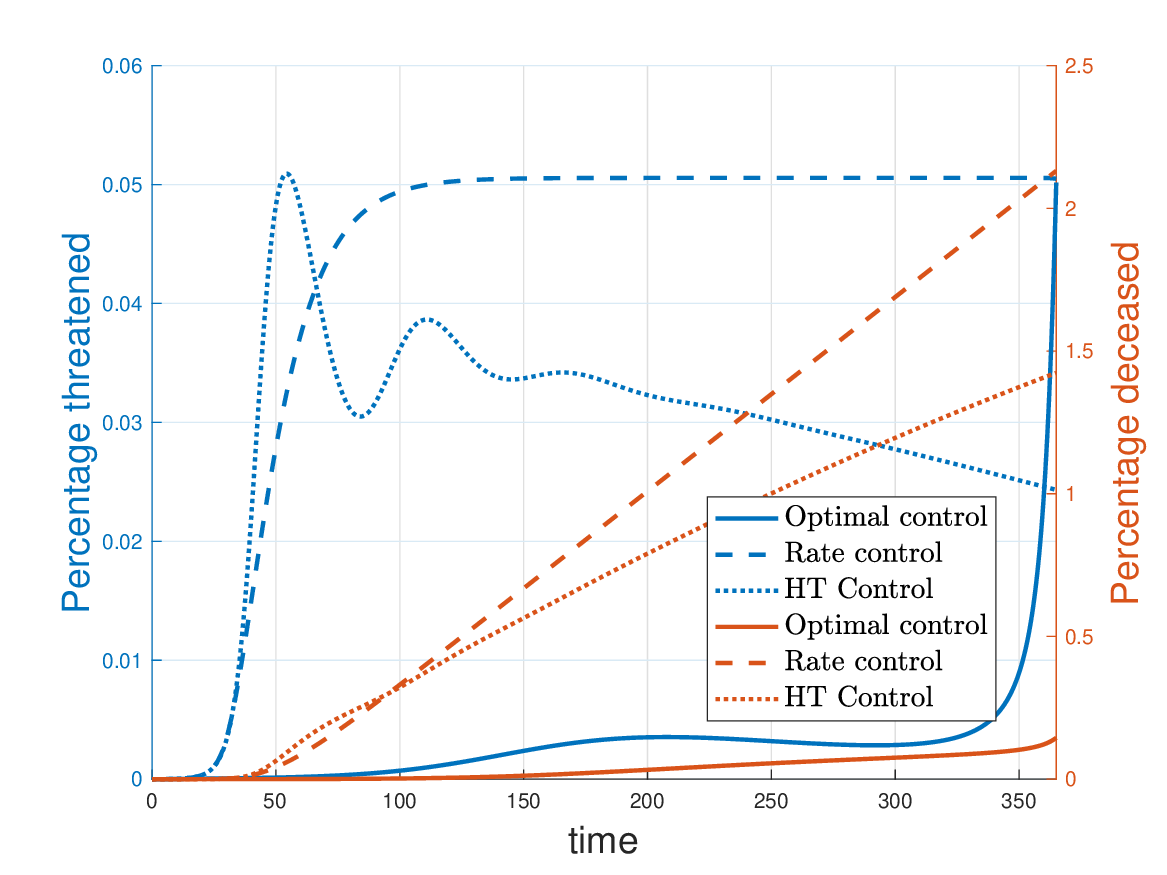}\\
				(a)&(b)&(c)
			\end{tabular}
		\end{center}
		\caption{(a) Comparison of different control strategies: (a) Overall costs and economic costs; (b) Control function. (c) Percentage of threatened and deceased individuals.}\label{fig:add_a}
	\end{figure*}
		
	In this section, we compare the Rate and HT controllers with optimal control during the first phase of the pandemic in which vaccines are still unavailable. All experiments refer to the single-class version of the model in \eqref{eq:dyn_sys}.
		
	In \cite{Kasis2022}, various government intervention strategies are compared against a specific percentage of the deceased population while employing different control policies. We replicate similar experiments and present the numerical solutions obtained via the optimal control approach within a time horizon $t_{\max}=365$ days. We fix  $\kappa_1=200$, $\kappa_3=20$, and $\zeta=4$, and we let $\kappa_2$ to vary from $10^5$ to $10^8$.

	The considered economic cost corresponds to the teal curve depicted in Figure \ref{Fig:1}. 
	\textcolor{black}{Such cost has been chosen non-convex, on purpose,  to put the Rate control strategy in the most challenging conditions (indeed  Proposition \ref{prop:minimizing_cost} and \ref{prop:2} do not hold). 
	Observe that, with our choice of parameters, the first term of the objective function in \eqref{eq:obj} is typically small with respect to the third, and therefore the choice of $\kappa_2$ becomes fundamental to determine the proper trade-off between economic cost and death/ICU occupancy, where the last two metrics are highly correlated, since by tightly controlling ICU occupancy, we exert tight control on the deaths and vice versa. }

	As discussed in the Appendix, the average sojourn time in state I has been set equal to 8 days and that in states H and T to 16 days. Therefore $\gamma=1/8$, $\phi=\tau=1/16$. The transition probabilities between compartments I, H, T, and D, satisfying constraint (2), are set for simplicity as follows: $p^{IH} = p^{HT} = \hat{p}^{TD}= p^{1/3}$ with overall mortality rate $p=0.01$. The assumed value for the basic reproduction number has been fixed to $\mathcal{R}_0=3$, as reported in \cite{yuan2020monitoring}, and the healthcare capacity parameter to $3.33\cdot10^{-3}$. 
	
	In Figure \ref{fig_optimal_control_a}(a), the economic cost  (in blue) and the control (in red) are shown as a function of time.
	For lower values of the parameter ($\kappa_2\in[10^5,10^6]$), the control measures are moderate and remain relatively constant for a brief period of approximately 50 days. After this initial phase, the control is tighter, reaching its maximum level of restriction. During this period, stringent measures are implemented to contain the epidemic effectively. Subsequently, as the situation improves or specific goals are achieved, the control is gradually relaxed, allowing for a more lenient approach to managing the epidemic. This sequential pattern of moderate-tightened-relaxed control measures aims to strike a balance between mitigating the spread of the disease and minimizing the socioeconomic impact on the population.
	The stringent initial intervention effectively disrupts the early exponential growth of the epidemic, leading to a dampened peak number of infections. 
	\textcolor{black}{In both previous cases at the end of the observation window, i.e., for $t \approx t_{\max}$, the population reaches herd immunity.}
	As we increase the value of $\kappa_2$, as expected, the control becomes more stringent and is kept constant for most of the time (around 350 days). At the same time, the healthcare system experiences less stress, and the number of deaths decreases at the expense of higher economic costs. Notably, when setting $\kappa_2$ above $2\cdot10^7$, we can confidently guarantee that the peak number of patients requiring intensive care remains below around $30k$ (see Figure \ref{fig_optimal_control_a}(b) where blue curves refer to ICU occupancy and red curves to cumulative deaths). This behavior highlights the importance of appropriately calibrating control parameters to achieve optimal outcomes in managing the epidemic and preventing overwhelming pressure on the healthcare infrastructure. 
	\textcolor{black}{ Observe that at the end of the observation window, i.e., for  $t \approx t_{\max}$, the control is always completely released, i.e.,  $\rho(t_{\max})=1$. 
	\textcolor{black}{This effect is a by-product of the optimal control approach, which does not account for what happens when $t>t_{\max}$.}
	Indeed, as $t$ approaches $t_{\max}$,  releasing the control leads to an instantaneous reduction of the economic cost, while, due to the delay, the resulting increase in ICUs and deaths is negligible (as it will take place after $t_{\max}$ ).}

	{\color{black}{The analysis in Figure \ref{fig_optimal_control_a}(c) highlights the trade-offs between economic cost and human lives achieved by optimal control and  Rate control, respectively. The curves have been obtained by varying parameter $\kappa_2\in[10^5,10^8]$ for optimal control, $\lambda\in[1000,700000]$ for Rate control. \textcolor{black}{ We have disregarded the healthcare stress cost, using the total number of deaths as a proxy of it}.
	It should be noted that the optimal control strategy proves to be the most effective, outperforming the rate control strategy. However, if we fix the number of deaths, for example, to 0.2\%  (i.e., 100000 deaths), the rate control strategy exhibits only a slightly worse economic cost. The difference between the economic cost curves of the two strategies is not substantial, with a modest 7\% increase obtained by the rate control strategy.
	Despite the increase in economic costs resulting from the rate control strategy, the difference is relatively small, indicating that both strategies remain competitive in managing the epidemic. 
				
	It is worth remarking that  \textcolor{black}{approximately the same value  of the  overall objective  function}  in \eqref{eq:obj}, which takes into account economic cost, deaths, and healthcare stress in the optimal control strategy, can be achieved through fairly different approaches. }
	{\color{black}  Figure \ref{fig:add_a} provides a comparison among the optimal control strategy with 
		$\kappa_2=2\cdot 10^7$,  the Rate control  with  $\lambda=120k$, and the linear HT  control
		with %$\rho_H(H)=H_{\max}/{(H_{\max}-0.25H)}$, $\rho_T(T)=T_{\max}/{(T_{\max}-0.1T)}$, 
		%	  $\widehat{T}=T_{\max}=30k$ and $\widehat{H}=H_{\max}=120k$.
		$\rho_H(H)=\min\left(15, \frac{H_{0}}{H_{0}-H}\right)$ and   $\rho_T(T)=\min\left(15, \frac{T_{0}}{T_{0}-T}\right)$, with and  $H_0=480k$ and $T_0=300k$. 
	While the overall cost for the three strategies is approximately the same, the different components of the cost are significantly different.}
	For what concerns the economic cost, the optimal control strategy appears to be the least favorable, resulting in the highest economic burden compared to the other strategies (the economic cost for the optimal control strategy is hardly distinguishable from the overall cost).
	On the other hand, the rate control strategy is the most efficient in minimizing economic costs, offering a more economically sustainable approach. The HT control strategy falls in an intermediate position, achieving a balance between cost-effectiveness and epidemic management. Regarding the number of deaths, the optimal control strategy demonstrates its strength, resulting in the lowest fatality rate among the three strategies. It effectively minimizes the loss of life during the epidemic. Conversely, the rate control strategy shows the highest number of deaths, indicating that this approach is less effective in preventing fatalities. The HT control strategy lies in between, offering an intermediate level of protection against the loss of life compared to the other two strategies.
	\textcolor{black}{In conclusion, adopting the overall cost as the unique driver for the choice of $\rho(t)$  turns out to be not particularly appealing to decision-makers because it does not allow them to exert direct control on the different components of the cost.}			
	}}

\subsection{Motivating the heterogeneity} 
	In this section, we question the importance of introducing in the model a stratification based on distribution $f_{r,p}$, given that, at least when $S_{r,p}(t) \approx N f_{r,p}$, our multi-class (stratified) model is equivalent to a single-class (non-stratified) model with a transmission rate modified by a factor $\beta=\frac{\sum_{r,p} r^2 f_{r,p}}{\sum_{r,p} r f_{r,p}}$.
	
	We now show that, besides being necessary to evaluate prioritized vaccination strategies, our stratification is fundamental also to compute the cost of pure non-pharmaceutical interventions. To this purpose, we consider a simple SIR model under perfect control of new infections, i.e., where $\rho(t) = \max\left\{1,\frac{\lambda_U(t-\varepsilon)}{\lambda}\right\}$ with $\varepsilon=1/100$ day.
	
	\begin{figure} [htb]
		\centering
		\includegraphics[width=.7\linewidth]{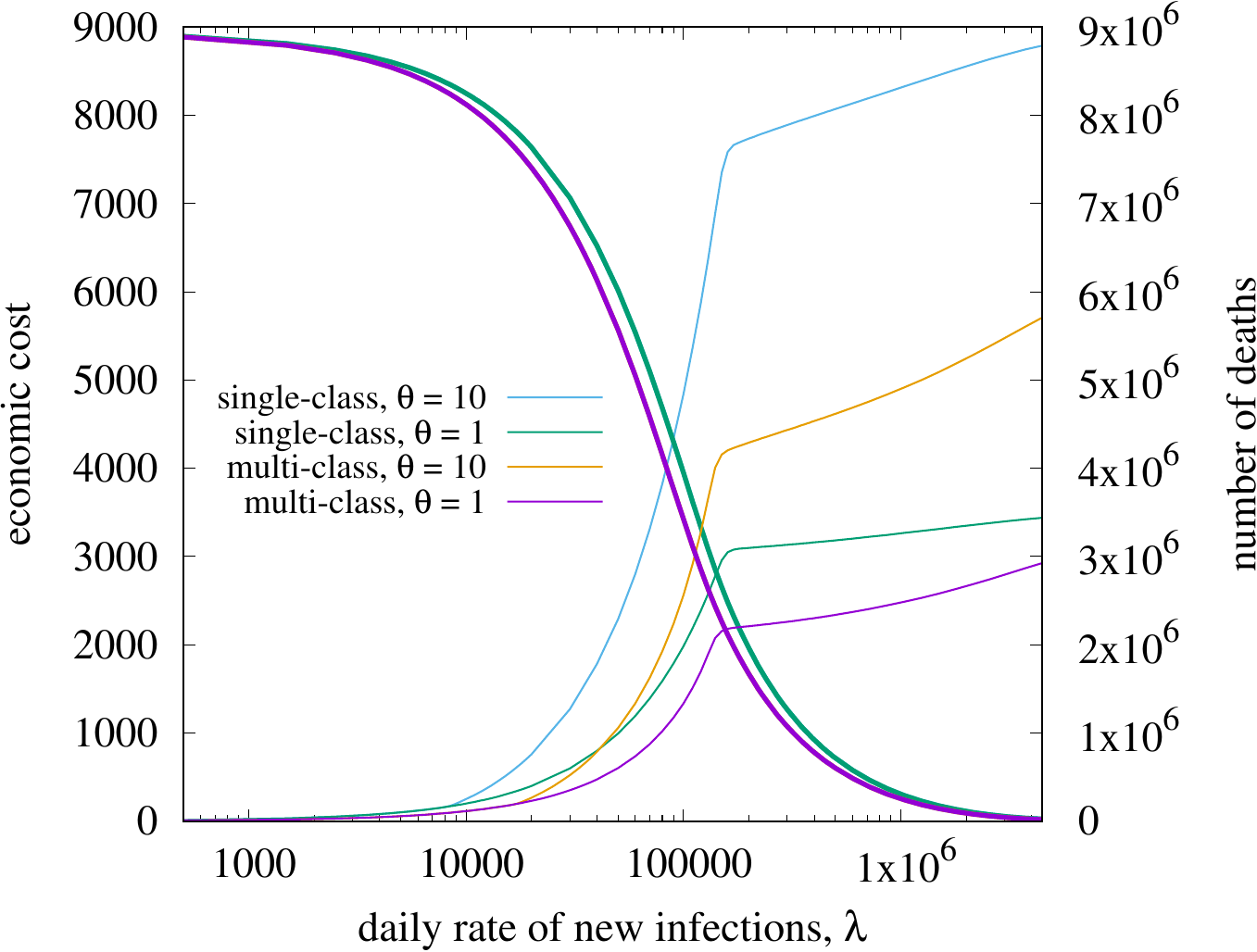}
		\caption{Comparison between single-class and multi-class models,
			and impact of parameter $\theta$, on economic and social costs,
			as a function of controlled rate $\lambda$ of new infections (log $x$ scale).} 
		\label{fig2}
	\end{figure}
	
	\begin{figure} [htb]
		\centering
		\includegraphics[width=.7\linewidth]{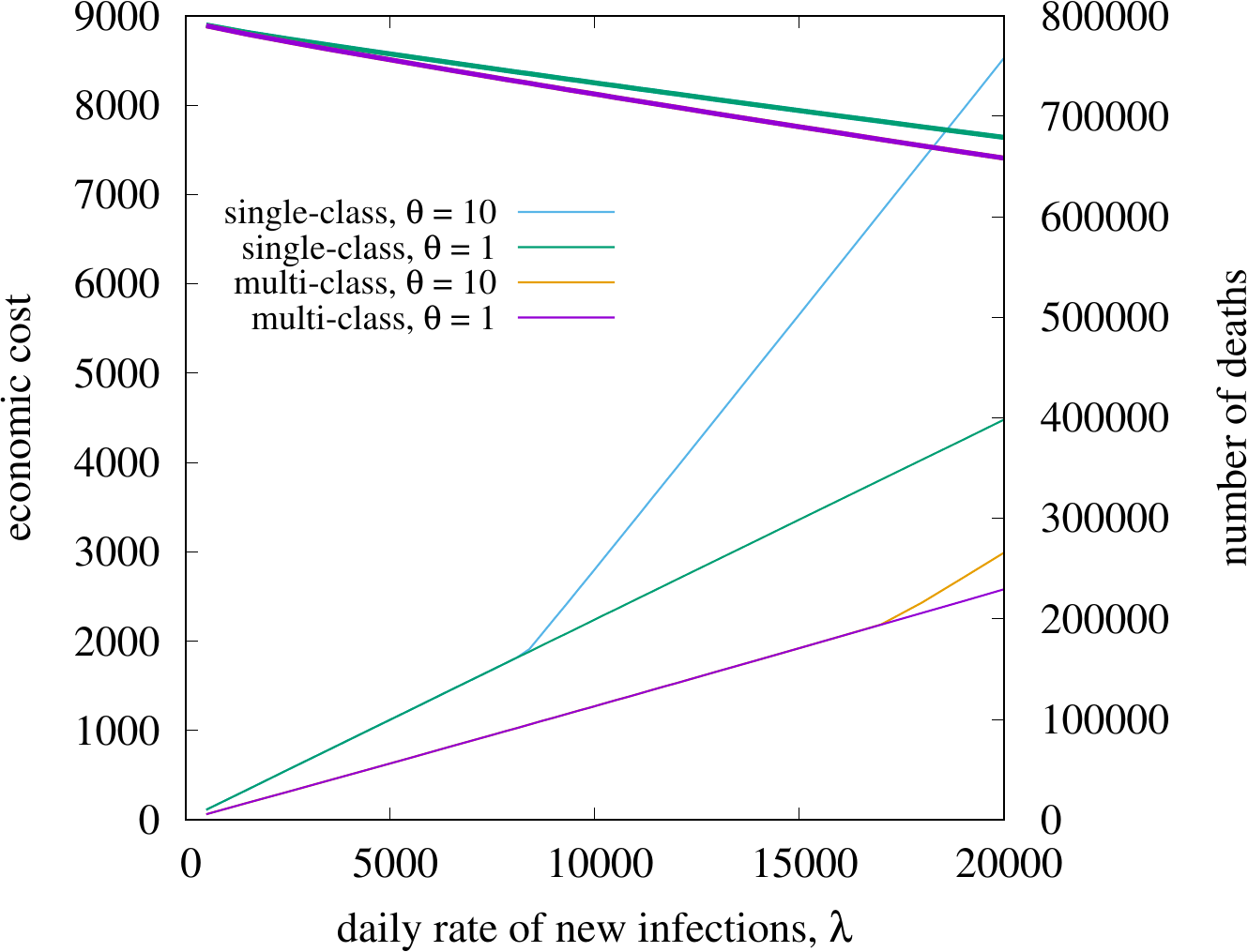}
		\caption{Same results as in Fig. \ref{fig2}, zooming in on the more 
			reasonable control regime of small $\lambda$.} 
		\label{fig3}
	\end{figure}
	
	In Fig. \ref{fig2}, we compare the economic and social costs derived by our model with those computed by a single-class model in which the contact rate and mortality of all individuals are set equal to mean values $\EX[r]$ and $\EX[p]$, respectively.
	In this experiment, the time horizon is $t_{\max} = 1$ year.
	
	We also compare the case in which saturation of intensive therapy capacity does not affect mortality ($\theta = 1$) and the case where mortality is severely increased by ICU saturation ($\theta = 10$).
	Fig. \ref{fig2} uses a log horizontal scale to encompass also large values of $\lambda$, corresponding to an almost uncontrolled epidemics, i.e., the attempt to quickly reach the naturally-acquired herd immunity (which occurs at the \lq knees' of the curves for the number of deaths). Notice that such a strategy to achieve herd immunity produces a dramatic number of deaths (2 million out of a population of 60 million) even in the most favorable case (multi-class model with $\theta = 1$).  
	We remark that the case fatality rate varies by time and location, and its measurement is affected by well-known biases exacerbated during the COVID-19 pandemic.
	Hence, the mortality in the numerical results might be overestimated, as the case fatality rate has been retrieved from data statistics that include both deaths {\it due to} or {\it with} COVID-19.
	
	Fig. \ref{fig3} reports identical results, but zooming in on the more reasonable control strategy in which 
	small $\lambda$ values are enforced.
	The economic cost (left $y$ axes) is approximately the same under single/multi-class models. This observation depends on the fact that, under the considered distribution $f_{r,p}$, $\EX[r] = 12.1$ and $\EX[r^2]/\EX[r] = 13.5$, are very close to each other. Moreover, the economic cost does not depend on $\theta$, as expected. In contrast, the predicted number of deaths (right $y$ axes) is highly diverse across different models and considered values of $\theta$. Even with $\theta = 1$, the single-class model predicts a much larger number of deaths.  
	This can be explained by the fact that under any realistic distribution $f_{r,p}$ (see Fig. \ref{fig:synthetic_populations}), contact rate $r$ and mortality $p$ are negatively correlated, such that most vulnerable individuals (elderly) have lower contact rate. Therefore, the single-class model, in which all individuals are identical, is more pessimistic in terms of deaths. More significant discrepancies between single- and multi-class models are observed with $\theta = 10$, since after saturation of ICU, occurring at the \lq bifurcation' points appearing in Fig. \ref{fig3}. Saturation effect of mortality probability (Eq. \equaref{pTD})  occurs in the multi-class scenario for some disadvantaged classes, but not in the single-class model. 
	
	%\vspace{-0.2 cm}
	\section{Experiments in a comprehensive scenario}\label{sec:reference}
	Our numerical results are obtained in a reference scenario roughly inspired by the actual evolution of COVID-19 in Italy during a period of 3 years, starting from the onset of the virus at the beginning of 2020. During this period, the dynamics of COVID-19 in Italy (and similarly in other European countries) have been characterized by three main phases, each spanning about one year:
	\begin{enumerate}
		\item {\bf first phase}: in this phase the most dangerous strains
		of the virus, e.g., the alpha and delta mutations, propagate
		in the absence of pharmaceutical interventions (vaccines), causing
		the majority of all deaths  attributed to COVID-19.
		\item {\bf second phase}: since the beginning of 2021, 
		vaccines started to be massively distributed to the population, 
		and almost all individuals (excluding no-vax people) completed
		the vaccination cycle (by receiving one or two doses) by the end of the second year.
		\item {\bf third phase}: since the beginning of 2022, 
		with the onset
		of the omicron variant, less dangerous but more virulent
		strains became prevalent, substituting the initial strains.
		Vaccines originally developed for the alpha and delta mutations
		also protected people against the omicron variant, though with 
		reduced efficacy.
	\end{enumerate}
	To capture the above dynamics, we made some simplifying
	approximations to limit the model complexity: we assume
	that a single variant (strain 1), with basic
	reproduction number $R_0^1 = 6$, propagates during the first
	2 phases, after which a new variant (strain 2) appears
	with higher $R_0^2 = 12$ and reduced mortality (by factor $q_{21}$ with
	respect to the mortality of strain 1, for each class of people).

	The parameters of our reference scenario are summarized in Table {\color{black} IV of the Appendix.} %\ref{tab:param}.
	Although our model and parameters can only roughly describe the actual dynamics of COVID-19 in Italy, they provide a realistic scenario in which different virus mitigation strategies can be compared, offering valuable insights.
	Of course, in our model for the reference scenario, we stratify the population using the $f_{r,p}$ distribution computed for Italy, as explained in \cite{SM}.
	Transition probabilities between compartments $I$,$H$,$T$,$D$ satisfying constraint \equaref{prodp} are set for simplicity as follows:
	$ p_{r,p}^{IH} = p_{r,p}^{HT} = \hat{p}_{r,p}^{TD} = p^{1/3} $.
	
	Strain 1 starts at time 0 with 1 initially infected individual.
	Similarly, strain 2 starts at time $t_2$ with 1 initially infected individual.
	We consider the economic cost function:
	$
	\mathfrak{C}(\rho) = (\rho-1)^{\alpha} 
	$ which satisfies the assumptions of Proposition \ref{prop:minimizing_cost} for $\alpha \geq 1$, and allows us to explore the impact of costs caused by more substantial non-pharmaceutical interventions by varying the single parameter $\alpha$.
	We emphasize that the resulting scenario is not specific to Italy: similar assumptions and parameters could describe equally well, at a high level, the dynamics of COVID-19 in other mid-size European countries or a single US state with a comparable population size.      
	At last, while each of the first two phases lasted approximately one year, in our analysis to have a complete view of the potential impact of different control approaches, we have also considered cases in which no effective treatments have been available for several years. When the epidemic spread out at the beginning of 2020, and the first decisions had to be made, no one could predict how long it would have taken to have effective vaccines/treatments available.
	\textcolor{black}{In the following, for the sake of simplicity, we neglect the term associated with the healthcare system stress by taking into consideration only social (deaths)  and economic costs, this  corresponds to set $\kappa_2=0$.}

	In the following three subsections, we start by considering only non-pharmaceutical interventions (corresponding to the \textit{first phase}). Then, we incorporate vaccinations (\textit{second phase}) into the model, and at last, we consider a comprehensive scenario with all three phases.
	
\subsection{First phase - Epidemics without vaccinations}
	Here we investigate the behavior of the system as a function of the control parameters, highlighting trade-offs of different regimes.
\subsubsection{Rate Control: assessing the impact of $\lambda$}
	\textcolor{black}{When controlling the infection rate, the suppression strategy, i.e., minimizing $\lambda$,} appears to be the most reasonable choice since it minimizes the number of deaths incurring an almost constant economic cost for, e.g., all values of $\lambda < 10000$. Indeed note that, once the system is stabilized around a fixed infection rate\footnote{Further, note that with proper control, the cost incurred during the transient phase necessary to bring the system to operate at a given $\lambda$ is negligible with respect to the long-term accumulated cost.} $\lambda^*$, the economic cost is the same for any $\lambda^*$, as long as $S(t) \approx N$. 
	
	In the case of COVID-19, some countries, e.g., China, have adopted the suppression strategy, which is particularly effective when restrictions can be geographically localized to small areas with limited impact on the national economy.
	Of course, this cannot be a solution in the long term unless the virus is totally eradicated or conditions change, e.g., herd immunity is reached through vaccinations.
	Indeed, note that all results discussed so far refer to a fixed time horizon $t_{\max} = 1$ year.
	
	\begin{figure} [htb]
		\centering
		\includegraphics[width=.75\linewidth]{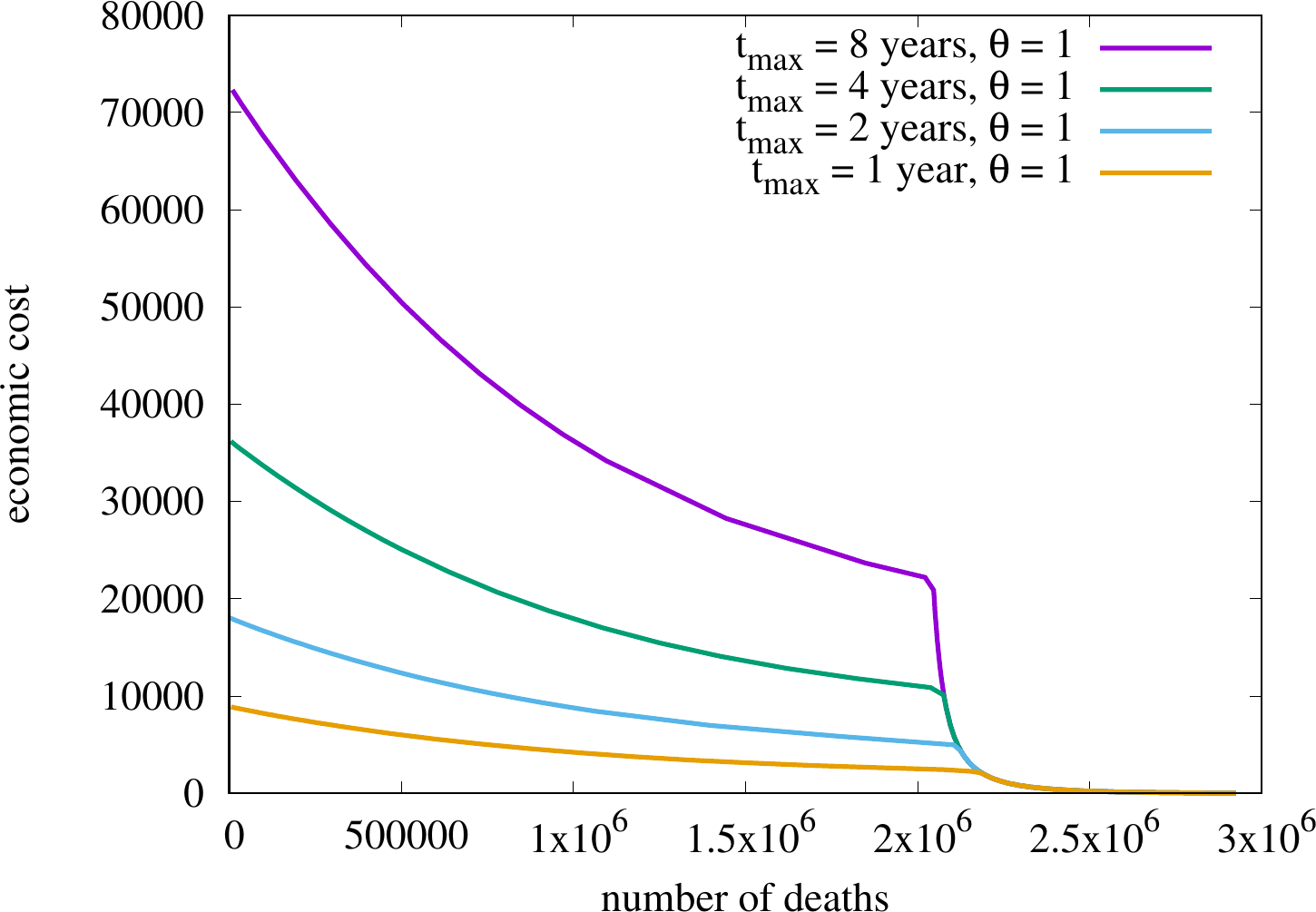}
		\caption{Parametric curves of economic cost vs social cost, as we vary $\lambda$, for different time horizons $t_{\max}$. Multi-class SIR model with $\theta = 1$.} 
		\label{fig4}
	\end{figure}

	To understand how the optimal strategy might change as we increase the time horizon $t_{\max}$, it is convenient to look at the plot in Fig. \ref{fig4}, showing parametric curves of economic cost vs. social cost, as we vary $\lambda$, for $t_{\max} = 1,2,4,8$ years.
	These results have been obtained by running the multi-class model with $\theta = 1$, putting us in the most favorable conditions (i.e., in the presence of unlimited healthcare facilities) to decide to abandon the suppression strategy.
	Clearly, under the suppression strategy, the economic cost increases linearly with time, so for $t_{\max}$ large enough, this strategy becomes necessarily suboptimal\footnote{It should be noticed, however, that a finite population model like ours is not adequate to describe a system running for more than, say, a few years.}.
	
	\begin{figure} [htb]
		\centering
		\includegraphics[width=.75 \linewidth]{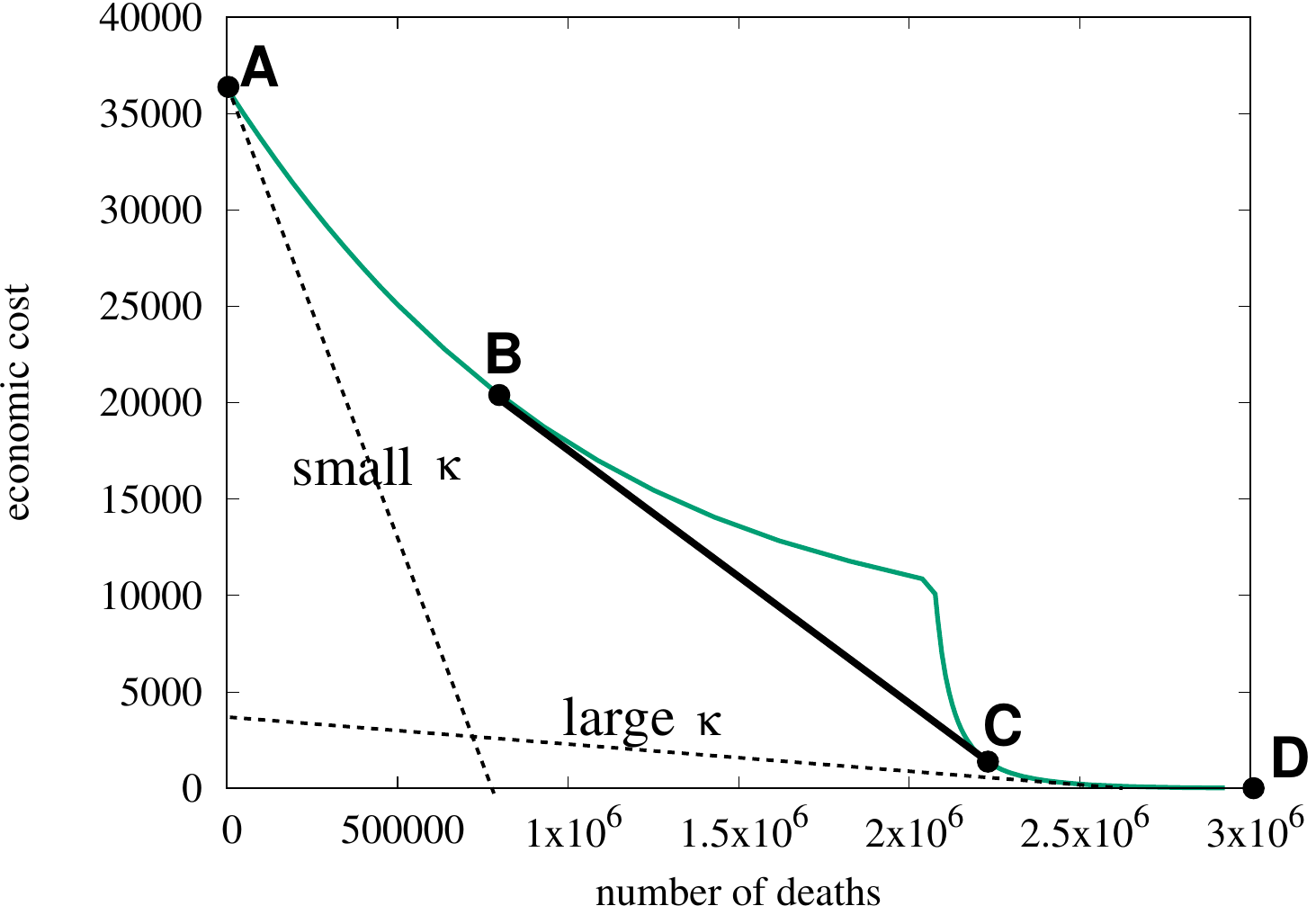}
		\caption{Pareto frontier of the multi-objective function \equaref{obj} in the
			case $t_{\max} = 4$ years. Multi-class SIR model with $\theta = 1$.} 
		\label{fig8}
	\end{figure}   
	
	Interestingly, curves shown in Fig. \ref{fig4} can be split into two convex parts connected at the point where the population reaches natural herd immunity (the knee). The consequences of this behavior on the multi-objective function \equaref{obj}, for $\kappa_2=0$ and $\kappa_3=1$, which is linear with respect to trade-off factor $\kappa_1$, are illustrated in Fig. \ref{fig8} for the case $t_{\max} = 4$ years.  
	We observe that all points between B and C are not Pareto-efficient, hence cannot be optimal solutions for the optimization problem \equaref{obj}. The optimal strategy exhibits a phase transition with respect to $\kappa_1$: for small values of $\kappa_1$ (social cost much more important than economic cost), the best strategy is total suppression (point A), whereas for large $\kappa_1$ we end up operating beyond the herd immunity knee.
	Intermediate solutions between A and B also exist, but only for a very small, particular range of $\kappa_1$ values.
	
	\begin{figure} [htb]
		\centering
		\includegraphics[width=.75\linewidth]{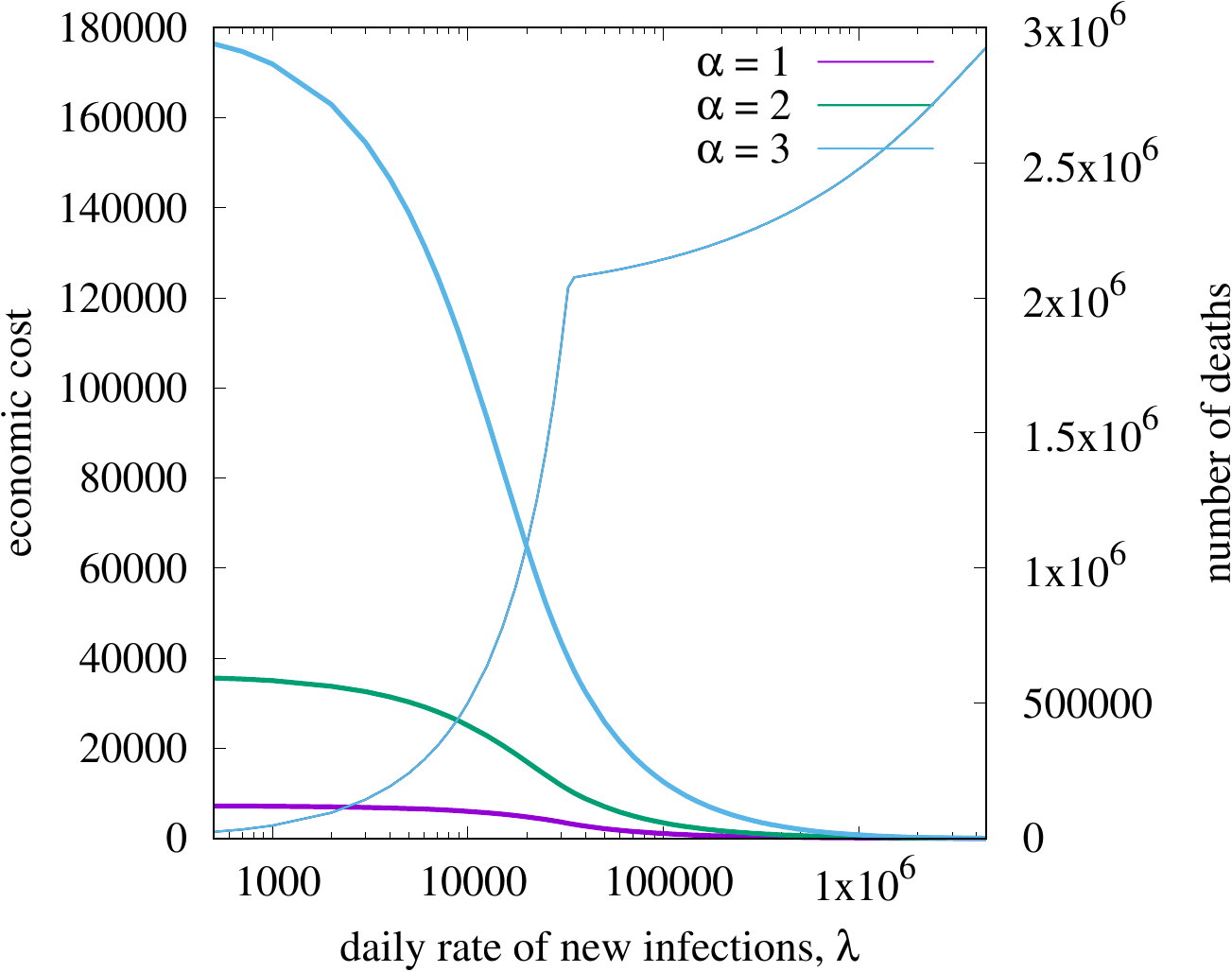}
		\caption{Economic and social costs as a function of 
			controlled rate $\lambda$, for fixed $t_{\max} = 4$ years and different values
			of $\alpha$.} 
		\label{fig6}
	\end{figure}
	
	The particular value of $\kappa_1$ at which the phase transition occurs, in addition to the time horizon $t_{\max}$, depends crucially on the exponent $\alpha$, as one can intuitively understand from Fig. \ref{fig6}, which shows economic and social costs as a function of the controlled rate $\lambda$, for fixed $t_{\max} = 4$ years, and different values of $\alpha = 1,2,3$: while the social cost is the same for all $\alpha$, the economic cost depends dramatically on $\alpha$. Note that $\alpha = 1$ is the extreme case for the validity of Proposition \ref{prop:minimizing_cost}.
	
	Proper values of $\alpha$ to be used in the model are difficult to set. However, the general conclusion remains the same: unless one considers considerably long (but unlikely to be significant) time horizons, the best option always appears to be the minimization of $\lambda$. With the parameters of COVID-19, and in particular, for the delta variant, the opposite \lq let it rip' strategy in which one tries to achieve the natural herd immunity (while still controlling $\lambda$ to avoid ICU saturation) produces an unreasonable social cost in terms of deaths. Some countries (like the UK) initially considered this option at the onset of the pandemic but quickly switched back to the suppression strategy after a few months. 
	
	Another reason why the 'let it rip' strategy considered so far is perilous is that it relies on the assumption that recovered people are immune forever, i.e., $\mu = 0$. In the case of COVID-19, natural immunity is progressively lost over time, so reinfections are possible about six months after recovery. Even assuming that reinfected people are much less likely to develop a severe form of the disease, we expect a significantly higher social cost when $\mu > 0$.
	This observation is confirmed by results in Fig. \ref{fig9}, showing economic and social costs for $t_{\max} = 4$ years, mortality reduction after the first exposure  $q_{\texttt{post}} = 10$, and different values of the average sojourn time in the immune state, equal to 6 months (as estimated for COVID-19), 1 year, 2 years, in addition to the optimistic hypothesis $\mu = 0$.

	\begin{figure} [htb]
		\centering
		\includegraphics[width=.75\linewidth]{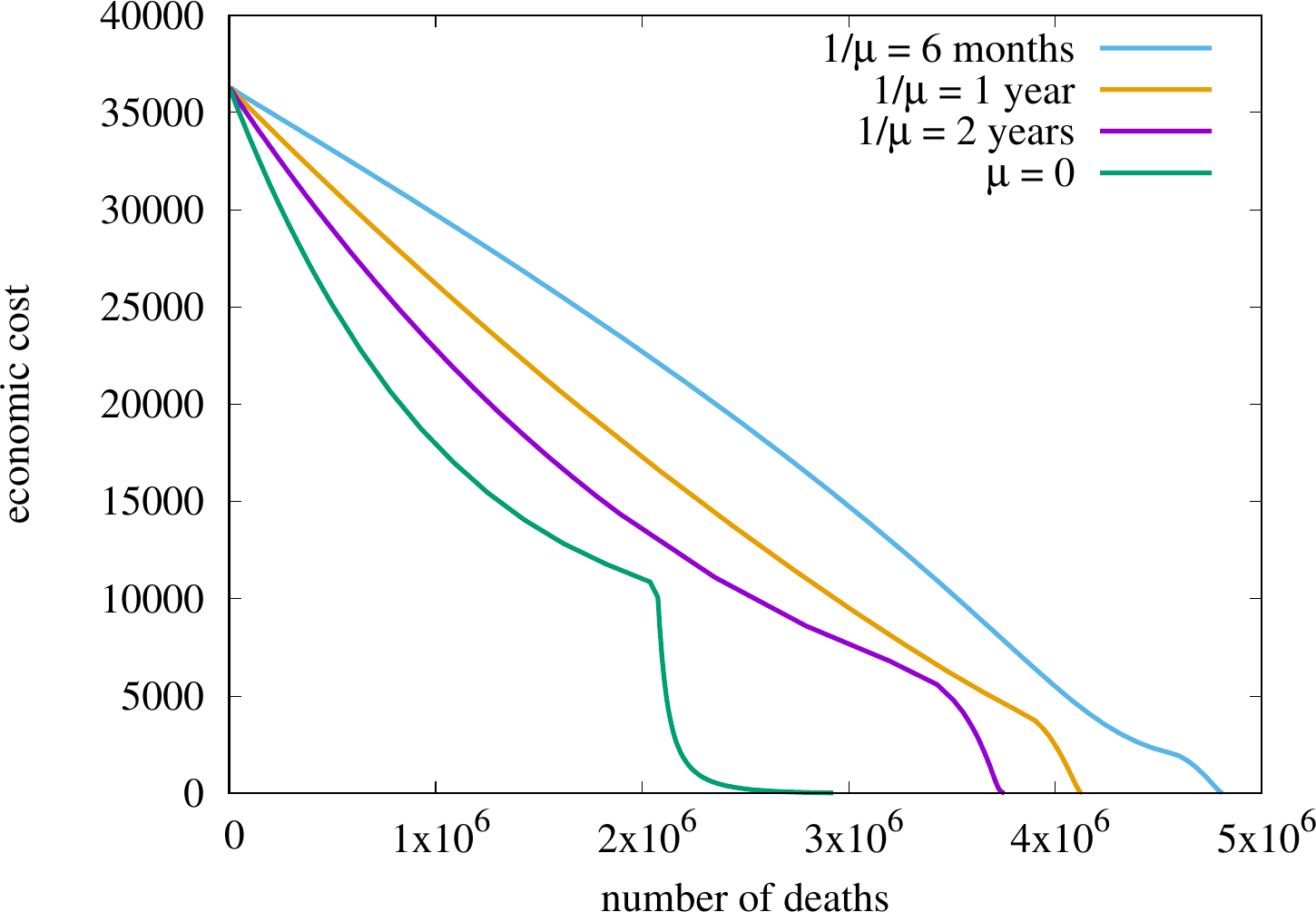}
		\caption{Parametric curves of economic cost vs social cost, as we vary $\lambda$,
			for fixed $t_{\max} = 4$ years and different   values of $\mu$. Mortality rate after reinfection is reduced by $q_{\texttt{post}} = 10$. } 
		\label{fig9}
	\end{figure}

	Note that at the beginning of the pandemic, the decision of which strategy to adopt was daunting because nobody knew the characteristics of the novel virus and whether effective vaccines could be developed, and after how much time.
	It was also unknown when and which mutations of the original virus would have replaced the original strain.
	In later sections, we will bring into the picture these two fundamental factors that have steered the pandemic's evolution after the first year.
	
	\medskip
\subsubsection{HT Control: assessing the impact of $H_{\max}$ and $T_{\max}$}

	We start analyzing the impact of parameters $H_{\max}$ and  $T_{\max}$ on the system dynamics.
	Both the implemented controllers are linear.
	
	In all the cases we have set $\widehat{H}=50k$ and $T_{\max}= \widehat{T}$.
	The choice $T_{\max}= \widehat{T}$ is justified by our previous analysis,  according to which  the maximization of ratio $T_{\max}/H_{\max}$  favors  system local stability around the equilibrium point.
	Note that our choice of parameters guarantees  local stability also in cases in which  the tightest control at the equilibrium point is exerted  by intensive therapies occupancy (indeed $1/\gamma= 1/\phi+1/\tau$).  Finally  note that, since in our scenario $\frac{\phi}{\tau} \frac{\EX[r \, p_{r,p}^{IH} \, p_{r,p}^{HT} ]}{\EX[r \, p_{r,p}^{IH}]}=0.331$,  we should enforce  $ T_{\max}/H_{\max}< 0.331$ to guarantee that at  the equilibrium point, the  tighter control is exerted by hospitalizations.   
	
	Figure \ref{fig:emilio} reports some result.  
	First, we have fixed  $T_{\max}= 10k$ and we let $H_{\max}$ vary. In particular we have chosen: $H_{\max} = 20k$ (top left plot),  $H_{\max} = 30k$ (top right plot), $H_{\max} = 50k$ (bottom left plot). 

	Only the first choice for $H_{\max}$ satisfies condition \eqref{condICU}.
	Note that by reducing  $H_{\max}$, we significantly reduce oscillations since the control on hospitalization becomes reactive. Periods in which the tightest control is exerted by hospitalizations/intensive therapy occupancy are highlighted in the figures. In no cases saturation of intensive treatment facilities is observed.
	Table \ref{CD-table}  complements the previous figure by reporting economic costs (with $\alpha=1,2,3$) and deaths for all scenarios.   In general, more conservative choices of $H_{\max}$ lead to significant reductions in the number of deaths, and in some cases also in the economic cost, as an effect of the reduction of oscillations.
	
	We have also tested, reporting results in Table \ref{CD-table}), situations in which $T_{\max}/H_{\max}$ is kept fixed equal to two (so to guarantee the satisfaction of condition \eqref{condICU}, while $T_{\max}$ is set respectively to $5k$, $10k$ and $20k$. Note that we obtain different trade-offs between economic cost and number of deaths. 
	In general, by increasing $T_{\max}$, we reduce the economic cost and increase the number of deaths.
	Evolution of metrics for the case   $T_{\max}=5 k$,   $H_{\max}= 10 k$  is  shown in Figure  \ref{fig:emilio} (bottom right plot).  
	In this case, contrarily to the case  $T_{\max}=10 k$ and   $H_{\max}= 20 k$, intensive therapy control exerts the tightest control for a given short period.
	
	At last,  Table \ref{CD-table}  reports results for the case  $T_{\max}=10 k$,   $H_{\max}= 10 k$.
	Observe that the performance of this last case is almost indistinguishable from the case $T_{\max}=5 k$,   $H_{\max}= 10 k$ (which requires just half of the intensive therapy facilities) both in terms of deaths and economic cost.
	
	In conclusion, in our scenario keeping the ratio $T_{\max}/H_{\max}\approx 2$ appears to be the best choice, as it guarantees that the tightest control is essentially always exerted by hospitalizations in dynamic conditions.
	Then   $T_{\max}$ (and consequently $H_{\max}$) should be chosen instead to achieve the desired trade-off between deaths and economic cost (as previously observed, deaths are more sensitive to parameters than economic costs).  
	In our analysis, we have neglected the costs related to the creation/maintenance of sanitary facilities (which are typically small with respect to general economic costs due to restrictions) to limit the number of free parameters. However, extending the model to include such costs would be relatively immediate.
	\begin{table}
		\caption{costs and deaths}\label{CD-table} 	
		\centering
		\begin{tabular}{cccccc}
			\hline
			\hline
			$T_{\max}$ & \!$H_{\max}$ &\!\text{cost} $(\alpha=1)$  
			&\!\!\text{cost} $(\alpha=2)$ & \!\!\text{cost} $(\alpha=3)$ & \!\!\text{deaths} \\
			\hline
			5$k$   & 10$k$   &  2.03$k$ & 13.0 $k$  &  102 $k$ &    13.2  $k$  \\
			10$k$ &  10$k$  &    2.03$k$     &    13.0 $k$                    &   101  $k$         &    13.2 $k$            \\
			10$k$  &       20$k$     &   1.94$k$  &      11.1   $k$   &  71.3  $k$  &  25.3    $k$                            \\
			10$k$   &   30$k$   & 1.92$k$  & 10.9 $k$ &   66.6 $k$  &   35.9  $k$   \\
			10$k$   & 50$k$   &  2.12$k$ & 14.6 $k$ &  115 $k$ &    42.2   $k$  \\
			
			20$k$  &  40$k$   & 1.88$k$     & 10.2 $k$  & 59.3 $k$ & 49.0 $k$   \\
			\hline
			\hline
		\end{tabular}
	\end{table}
\begin{figure*}[tb]
	\centering
	\begin{tabular}{cccc}
		\includegraphics[width=0.47\columnwidth]{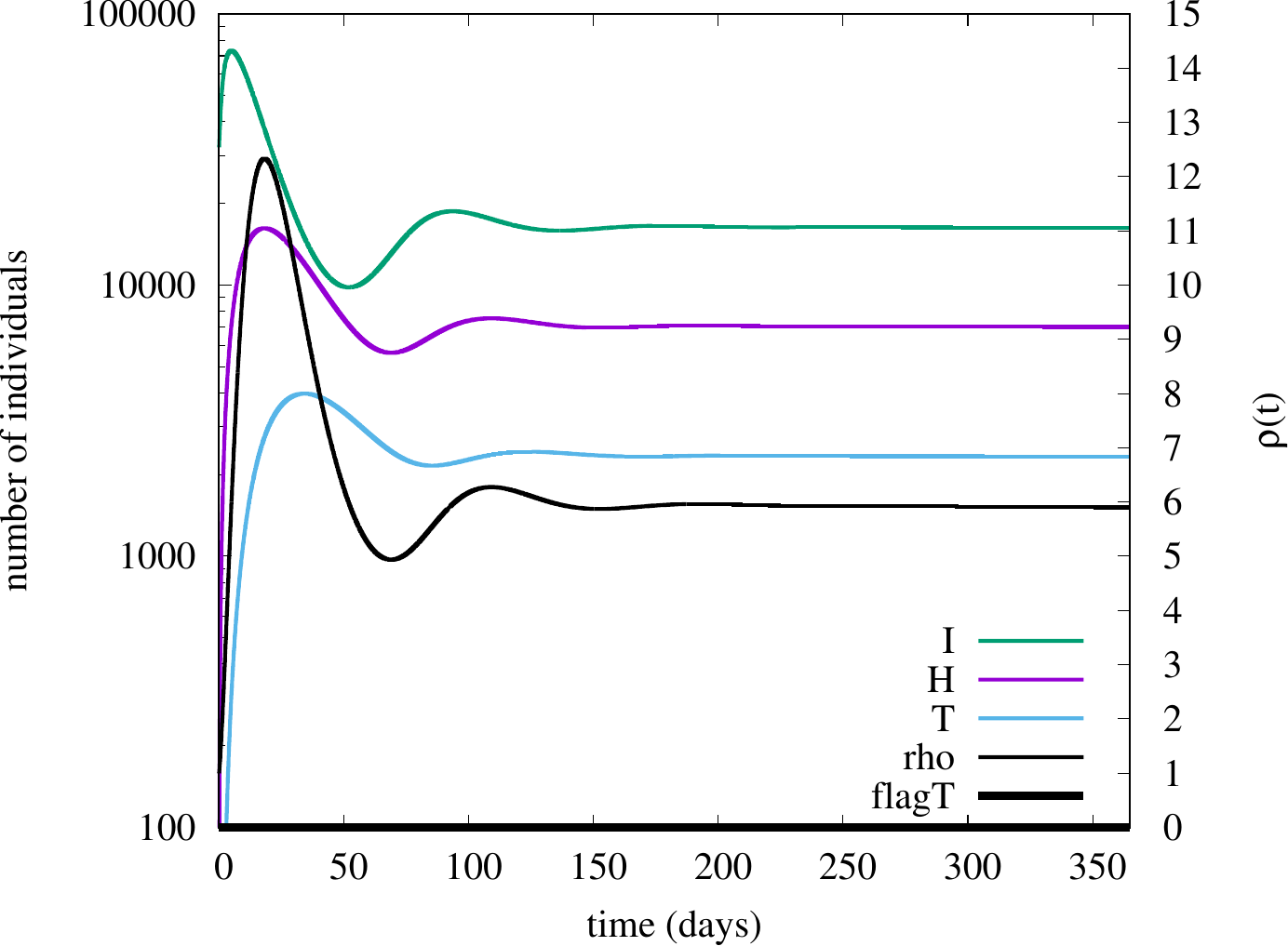}&
		\includegraphics[width=0.47\columnwidth]{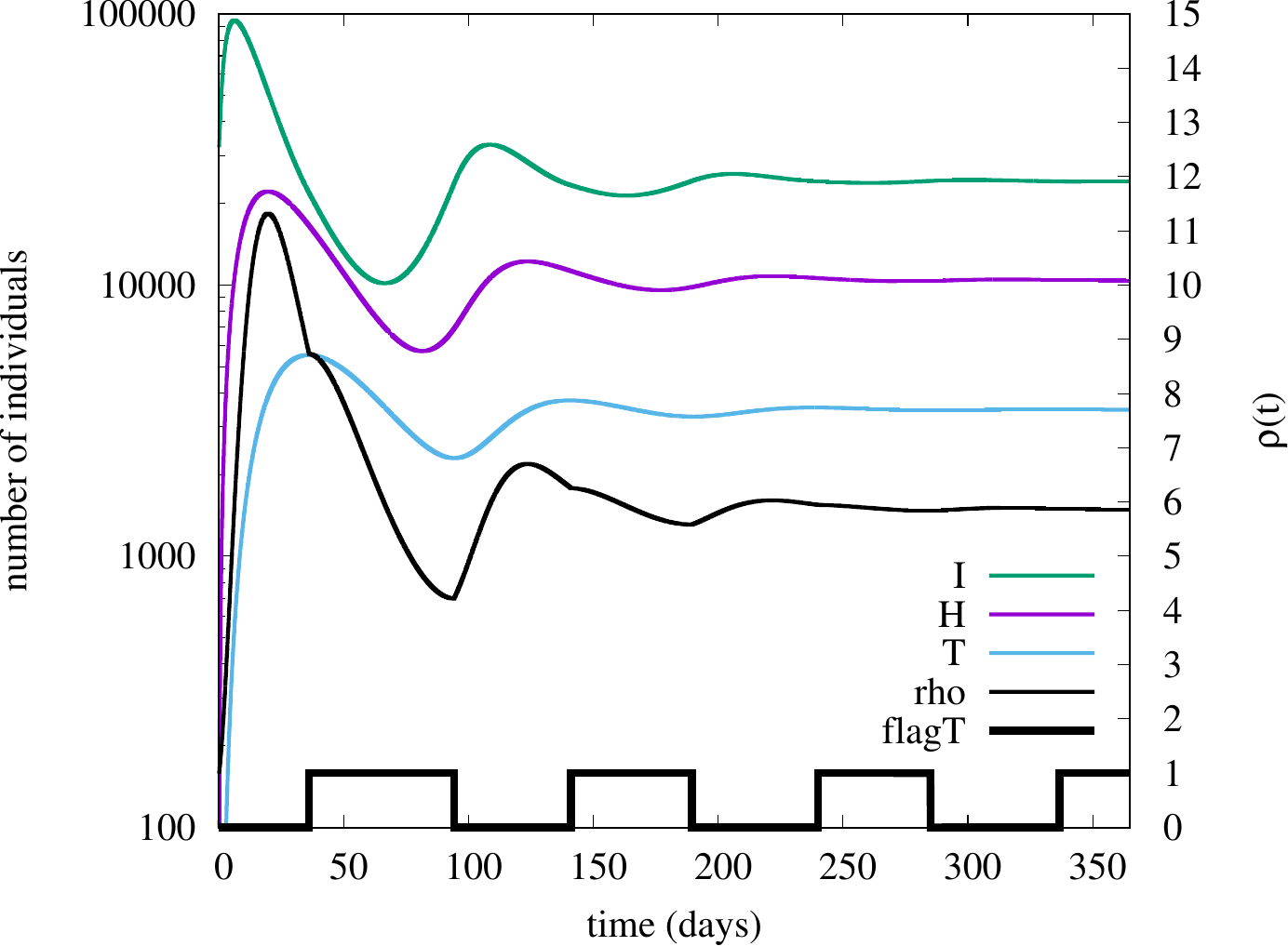}&
		\includegraphics[width=0.47\columnwidth]{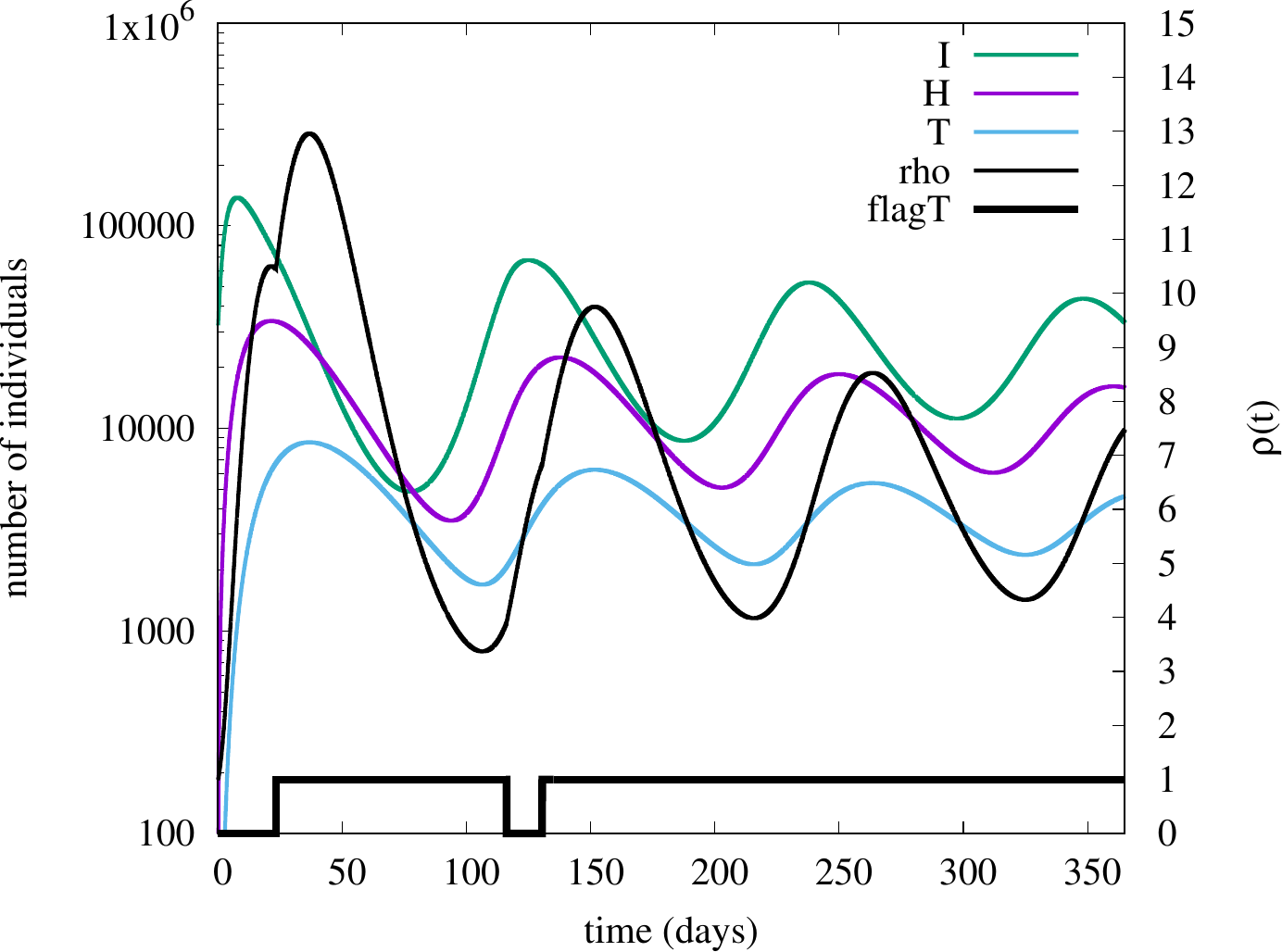}&
		\includegraphics[width=0.47\columnwidth]{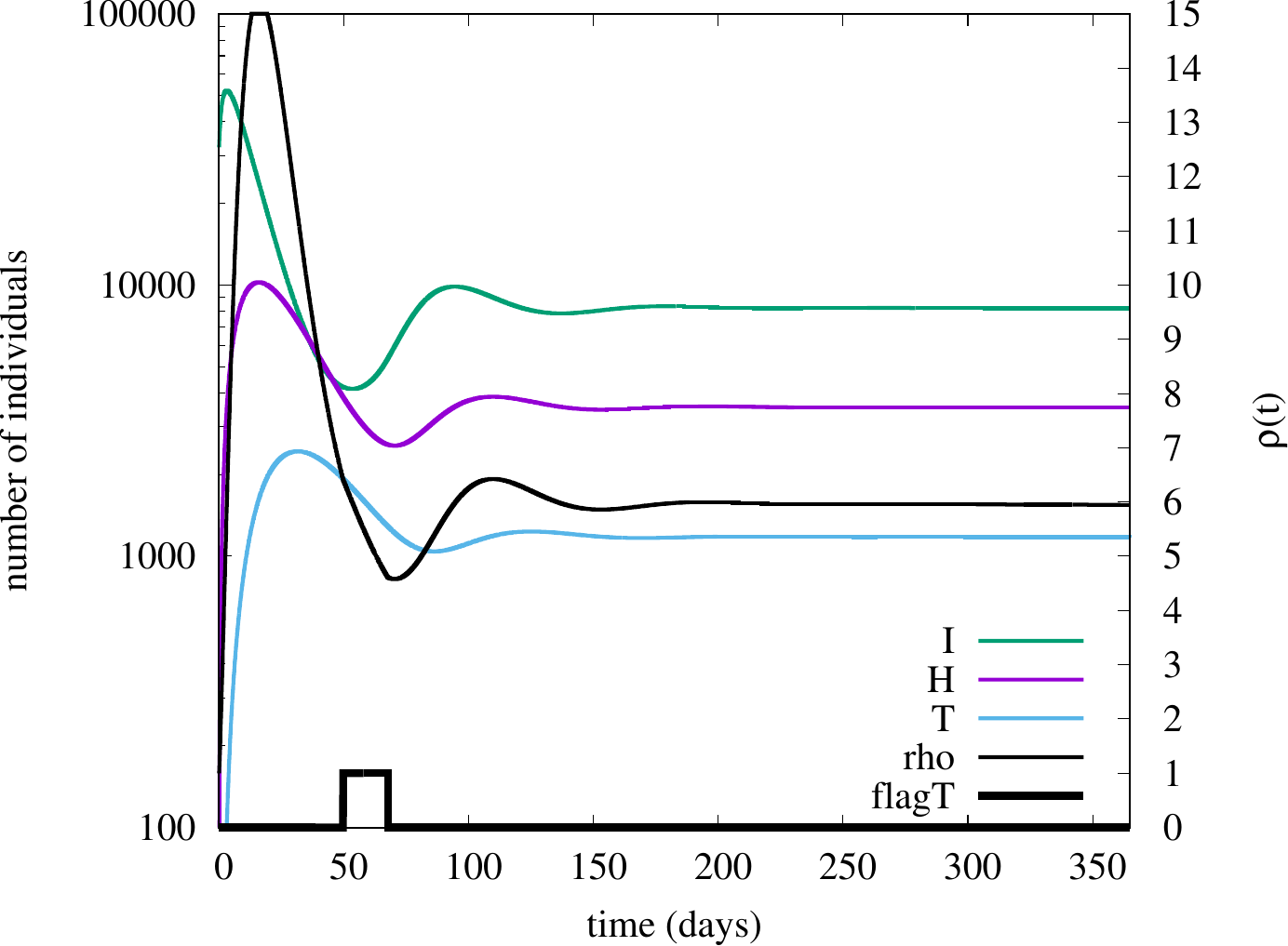}
		\\
		%&
		(a)&(b)&(c)&(d)\\
	\end{tabular}
	\caption{Evolution of $I(t)$,$T(t)$ (left $y$ axes) 
		and $\rho(t)$ (right $y$ axes), for different combinations
		of $T_{\max} = \widehat{T}$ and $H_{\max}$, 
		and fixed $\widehat{H}=100k$. 
		The rectangles at the bottom of the plots indicate periods in which 
		$\rho(t)$ is determined by $T(t)$.
	}\label{fig:emilio}
\end{figure*}
	
{\color{black}\subsection{Second phase - Mobility restrictions and vaccinations}\label{subsec:vaccinations}}
	We now add the \lq second phase' of our reference scenario, considering the joint impact of
	vaccination policies and control strategies during the second year of the pandemic.
	Recall from Sec. \ref{subsec:priority} that we focus on two extreme vaccine prioritization
	policies: Most Vulnerable First (MVF) and 
	Most Social First (MSF). 
	
	We will consider a single type of vaccine to be administered in two doses separated by a variable interval of $\Delta$ days. In this way, we can address an issue raised in some countries, e.g., the UK, when vaccines started to be available for mass distribution, i.e., whether it is better to follow the recommended protocol ($\Delta = 21$ days) or to give one dose to the largest possible population, before administering the second dose.
	The latter policy, which aims at partially immunizing a vast portion of the population, corresponds to choosing $\Delta = 135$ days.
	In our investigation, we assume the vaccination rate to be constant and such that the entire population can receive two doses after 9 months (270 days).
	
	No vaccine is available during the first year (first phase). 
	To better compare our two control strategies, we initially start the system at the equilibrium point ($I^*,H^*,T^*$), disregarding the transient needed to reach such equilibrium.\footnote{A comprehensive analysis of the complete scenario also comprising the initial transient will be presented later in Sec. \ref{Sect:finalresults}.}
	Under the HT strategy, we assume that control is always determined by the occupation of regular hospitals, rather than ICU, by adequately setting the ratio $T_{\max}/H_{\max}$.
	Moreover, note that the parameters of the HT strategy can be tuned to achieve the desired number $I^*$ of infected people at the beginning of the pandemic. This allows us to compare the trade-offs achievable by our two control policies.
	
	{\color{black}{Given the current understanding of COVID-19 vaccines, one limitation of the approach is the uncertainty surrounding the specific efficacy of different vaccines and their effectiveness against emerging variants. Vaccine efficacy can vary depending on age, underlying health conditions, and individual immune response. Additionally, the duration of vaccine-induced protection and the potential for waning immunity over time are still being studied.
	As a result, the parameters related to vaccine prioritization, such as the efficacy rates and the duration of protection, are subject to a range of values rather than precise estimates. The lack of comprehensive knowledge about these parameters restricts the ability to determine an optimal vaccination strategy with certainty.
	Therefore, the study may need to consider a range of plausible values for vaccine-related parameters and perform sensitivity analyses to assess the robustness of the results under different scenarios. Given the considerations above, we introduce variability in the efficacy ratio between the first and second doses of the vaccine. Specifically, we examine two different values for this ratio, denoted as $\mathsf{VE}^{1}/\mathsf{VE}^{2}$, namely 0.3 and 0.6. Meanwhile, we keep the efficacy of the second dose fixed at $\mathsf{VE}^{2}=0.9$. By incorporating this range of values for the efficacy ratio, we account for the uncertainty surrounding the relative effectiveness of the two vaccine doses.
	}}
	
	\begin{figure*}[tb]
		\centering
		\begin{tabular}{cccc}
			\includegraphics[width=0.47\columnwidth]{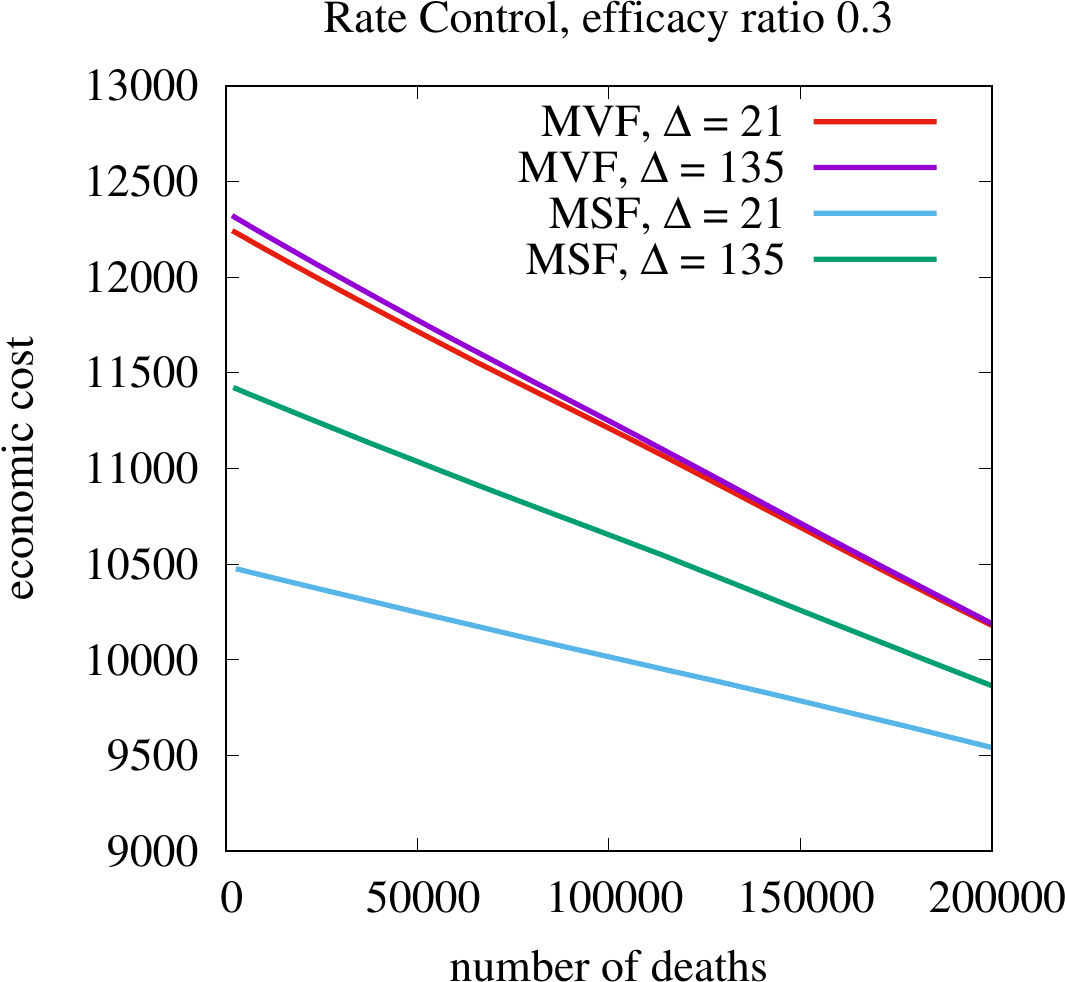}&
			\includegraphics[width=0.47\columnwidth]{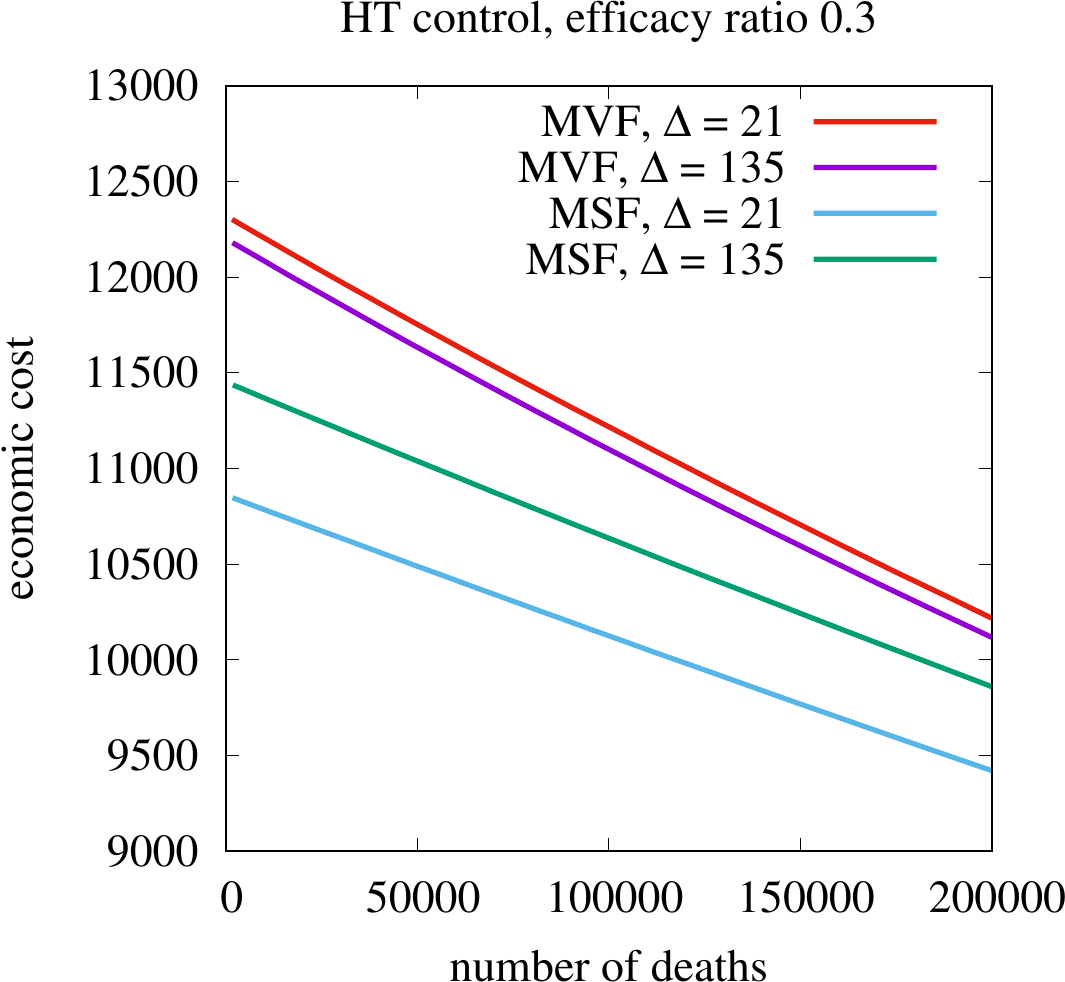}&
			\includegraphics[width=0.47\columnwidth]{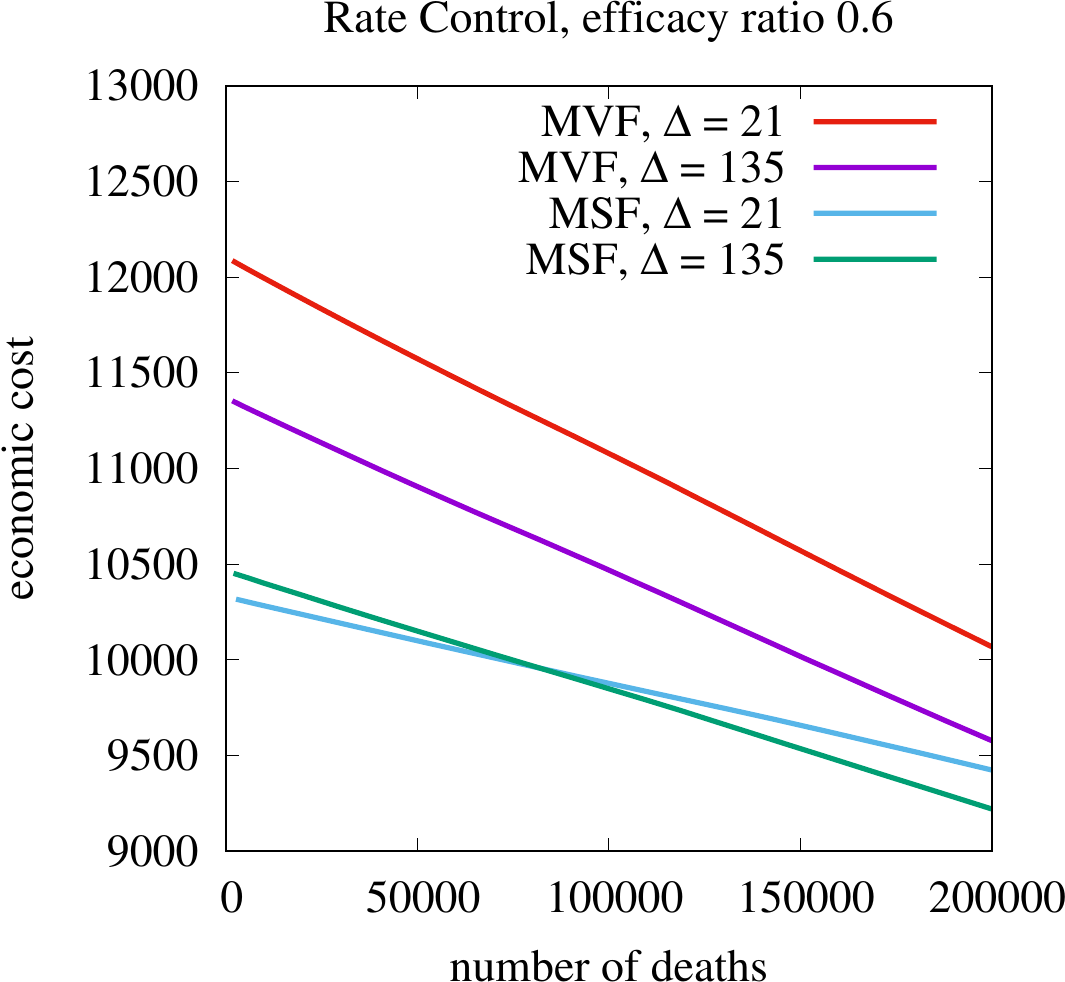}&
			\includegraphics[width=0.47\columnwidth]{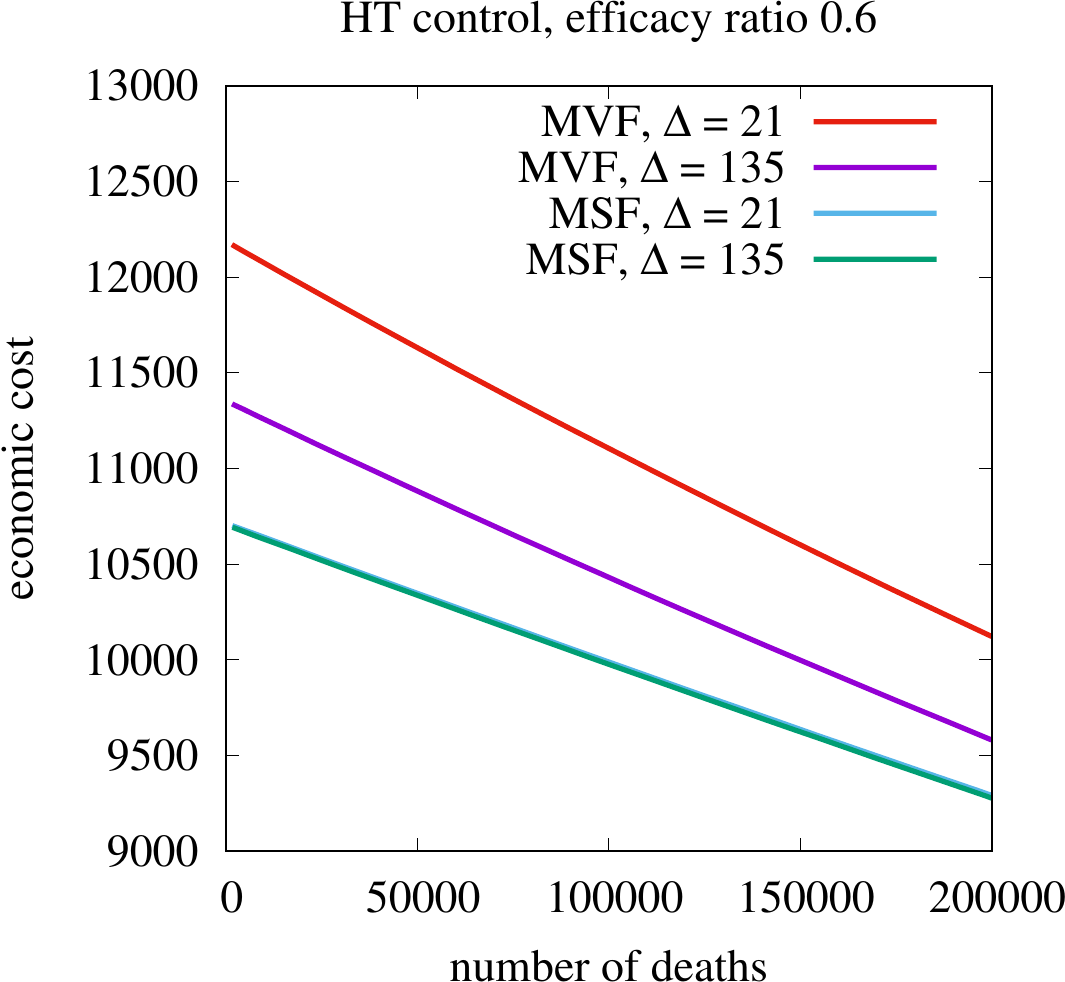}
			\\
			%&
			(a)&(b)&(c)&(d)\\
		\end{tabular}
		\caption{Impact of vaccination policies and control strategies on deaths and economic cost. All individuals are vaccinated in 270 days. } 
		\label{fig:vaccination_control_strategies}
	\end{figure*}
	
	The achievable trade-offs between economic cost and number of deaths, measured at the end of the second year, are shown in plots (a),(b),(c), and (d) of Fig. \ref{fig:vaccination_control_strategies}, for the four combinations arising from the two considered control policies and the two considered efficacy ratios (see plot titles). Each plot contains four curves related to the four combinations of vaccination policies (MSF vs. MVF, $\Delta = 21$ vs. $\Delta = 135$).
	
	Several observations are in order. First, the MSF policy (green and blue curves) generally outperforms MVF (red and purple curves). This fact is not trivial and depends crucially on the extent of the negative correlation between $r$ and $p$ in the population distribution $f_{r,p}$. Note that the MSF policy is hardly implementable in practice. Indeed, only the MVF policy has been deployed in many countries, by simple age prioritization, except for special categories of workers (e.g., healthcare workers) who have also received the vaccine in advance due to their exposition to the virus.
	
	Second, as expected, the efficacy ratio of 0.6 leads to better outcomes than the efficacy ratio of 0.3. In particular, delaying the distribution of the second dose ($\Delta = 135$) is not advisable if the first dose is relatively ineffective (efficacy ratio 0.3).
	
	Third, the impact of different control strategies is fairly small, with rate control slightly outperforming HT control. The best possible trade-offs, i.e., the lowest possible curves, are generated by the rate control, MSF, and a properly tuned $\Delta$ (note the crossing between blue and green curves on plot Fig. \ref{fig:vaccination_control_strategies}(c)).
	
	The effect of the two control strategies, combined with different vaccination policies,  can be better understood by looking at temporal dynamics shown in Fig. \ref{figcompareICU} for rate and HT control. In both cases, we assume an initial number of infected people $I^* = 32,000$ (corresponding to $\lambda_C=4,000$) while restricting ourselves to an efficacy ratio of 0.6.       
	
	\begin{figure} [h!]
		\centering
		\includegraphics[width=.7\linewidth]{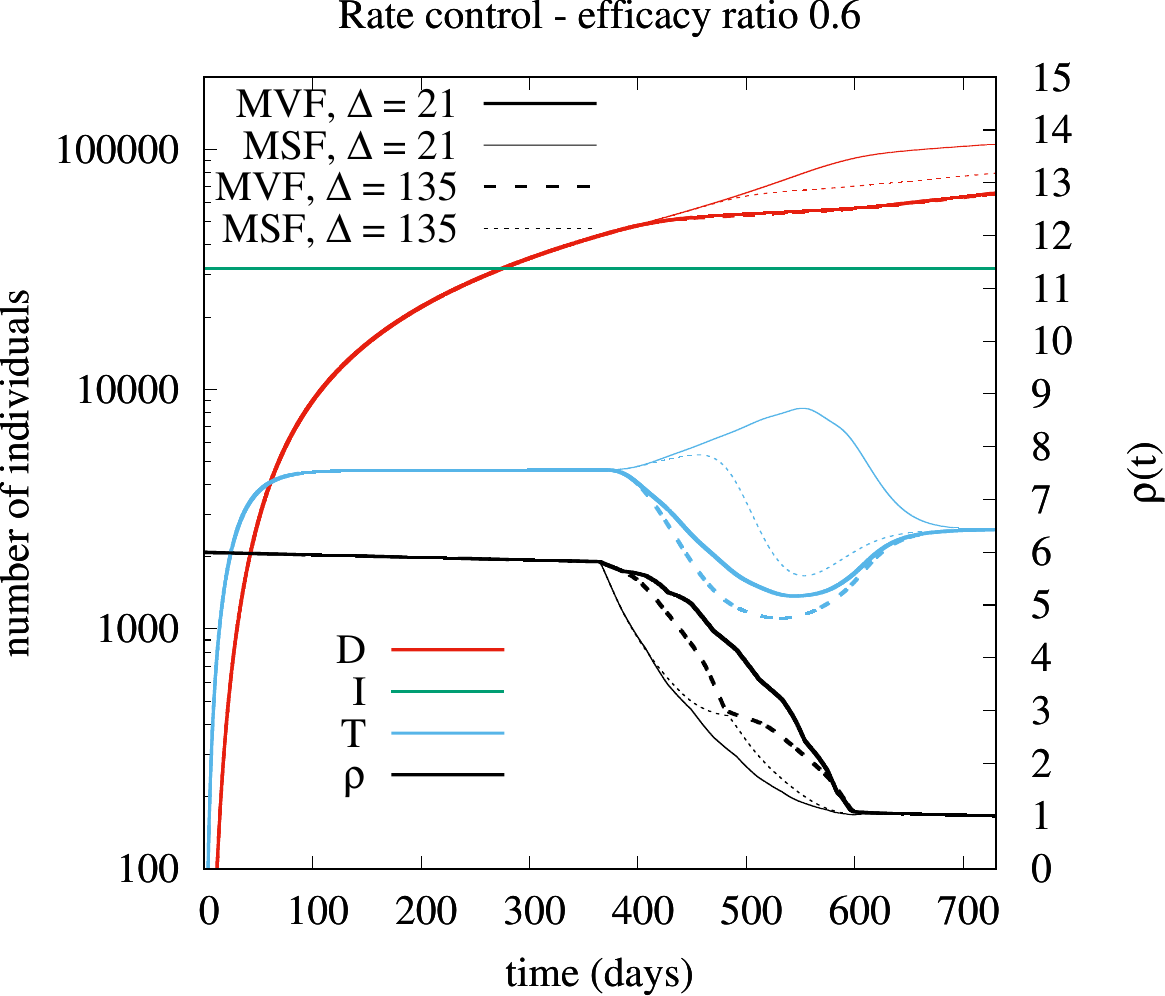}
		\vspace*{1.5mm}
		\includegraphics[width=.7\linewidth]{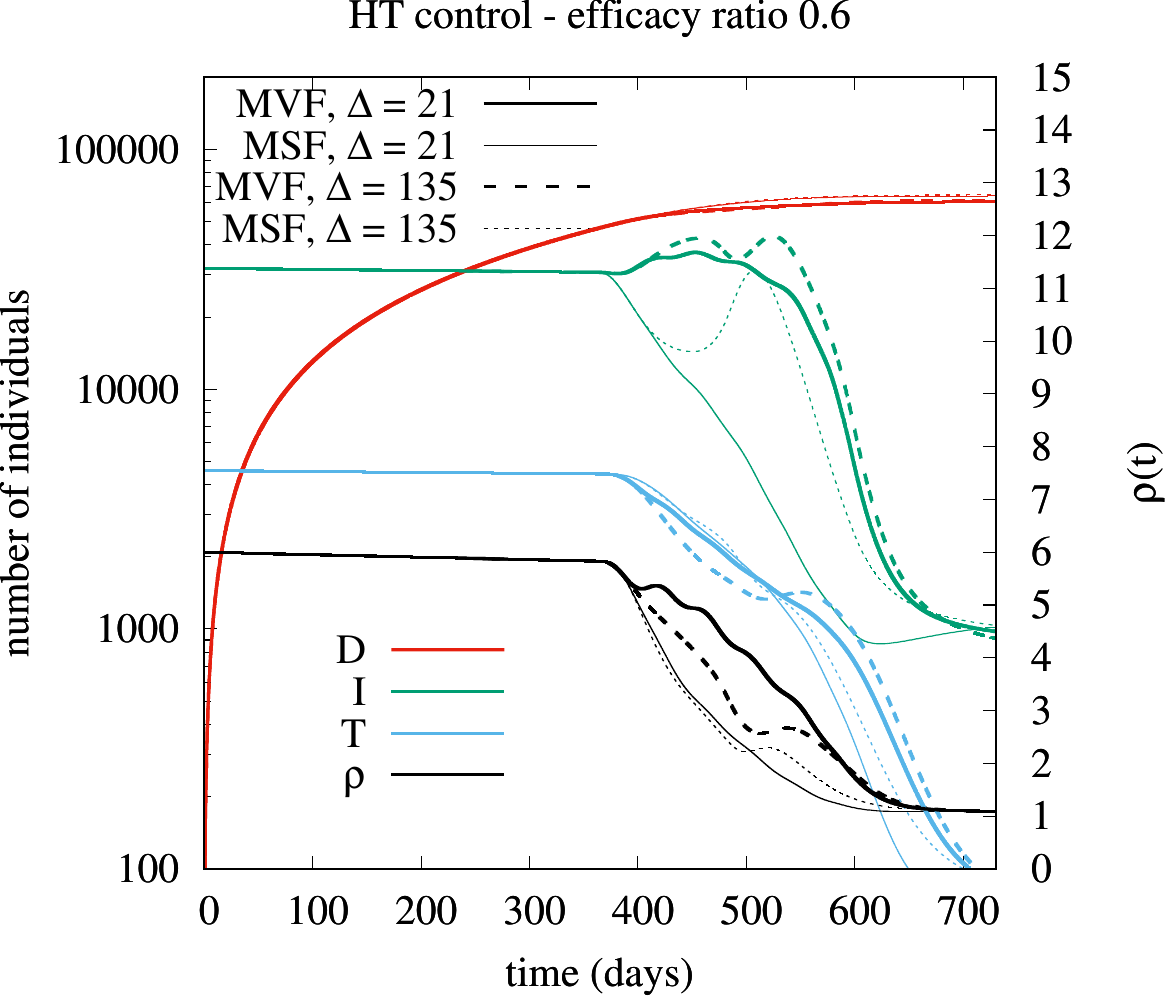}
		\caption{Evolution of $I(t),D(t),T(t),\rho(t)$ in the case of Rate control (left) and HT control (right), efficacy ratio 0.6, and different vaccination policies (different line styles of the same colour). } 
		\label{figcompareICU}
	\end{figure}
%	\begin{figure} [htb]
%		\centering
%		\includegraphics[width=.49\linewidth]{compare_rate}
%		\includegraphics[width=.49\linewidth]{compare_ICU}
%		\caption{Evolution of $I(t),D(t),T(t),\rho(t)$ in the case of Rate control (left) and HT control (right), efficacy ratio 0.6, and different vaccination policies (different line styles of the same colour). } 
%		\label{figcompareICU}
%	\end{figure}
	
	The evolution of $D(t)$, $I(t)$, $T(t)$, $\rho(t)$ in Fig. \ref{figcompareICU} is shown by curves of different colors,
	respectively red, green, blue, and black. Thick (thin) lines correspond to MVF (MSF).
	Solid (dashed) lines correspond to $\Delta = 21$ ($\Delta = 135$).
	Let us start with the simpler case of rate control in Fig. \ref{figcompareICU}.
	Here, $I(t)$ is maintained constant through the entire period of two years. When vaccinations start (day 365), two extreme behavior for $\rho(t)$ arise, as expected, by MSF with $\Delta = 21$ (thin dashed black line) and MVF with $\Delta = 21$ (thick dashed black line), with the other curves (related to $\Delta = 135$) lying in between these two.  
	MSF with $\Delta = 21$ allows us to release social restrictions more quickly, lowering the economic cost at the expense of more deaths. 
	The case of HT control in Fig. \ref{figcompareICU} is more complex, since here $I(t)$ is not constant and, in fact, decreases drastically during the second year thanks to the self-adaptive nature of HT control.
	
	The fact that better trade-offs are achieved by the not self-adaptive rate control at the end of the second year may appear counter-intuitive. 
	Note, however, that such better trade-offs are only possible under a carefully tuned MSF policy, and they are thus hardly achievable in practice.
	At last, observe that in a more realistic setting, one might not arbitrarily choose the rate of new infections. For example, if one cannot operate below $\lambda_C=4,000$, from Fig. \ref{figcompareICU}, the best option would likely be MVF, which produces significantly fewer deaths at the expense of a tolerable and largely justifiable increase of the economic cost.
	Interestingly, in this case, $\Delta = 135$ would produce a significantly lower penalty in the economic cost with respect to $\Delta = 21$ while generating an almost identical number of deaths. 
	\smallskip
\textcolor{black}{\subsection{Third phase - Control in a comprehensive scenario}\label{Sect:finalresults}}
	At last, \textcolor{black}{we consider a scenario encompassing all three epidemic phases, spanning over three years, as described in Sect. \ref{sec:reference}}. The MVF-$\Delta=21$ vaccination policy was chosen in light of the fact that many countries have largely adopted this policy. The ratio between the first and second doses' efficacy has been set to $0.6$.
	
	\begin{figure*}[tbh]\centering
		\begin{tabular}{ccc}
			\includegraphics[width=0.62\columnwidth]{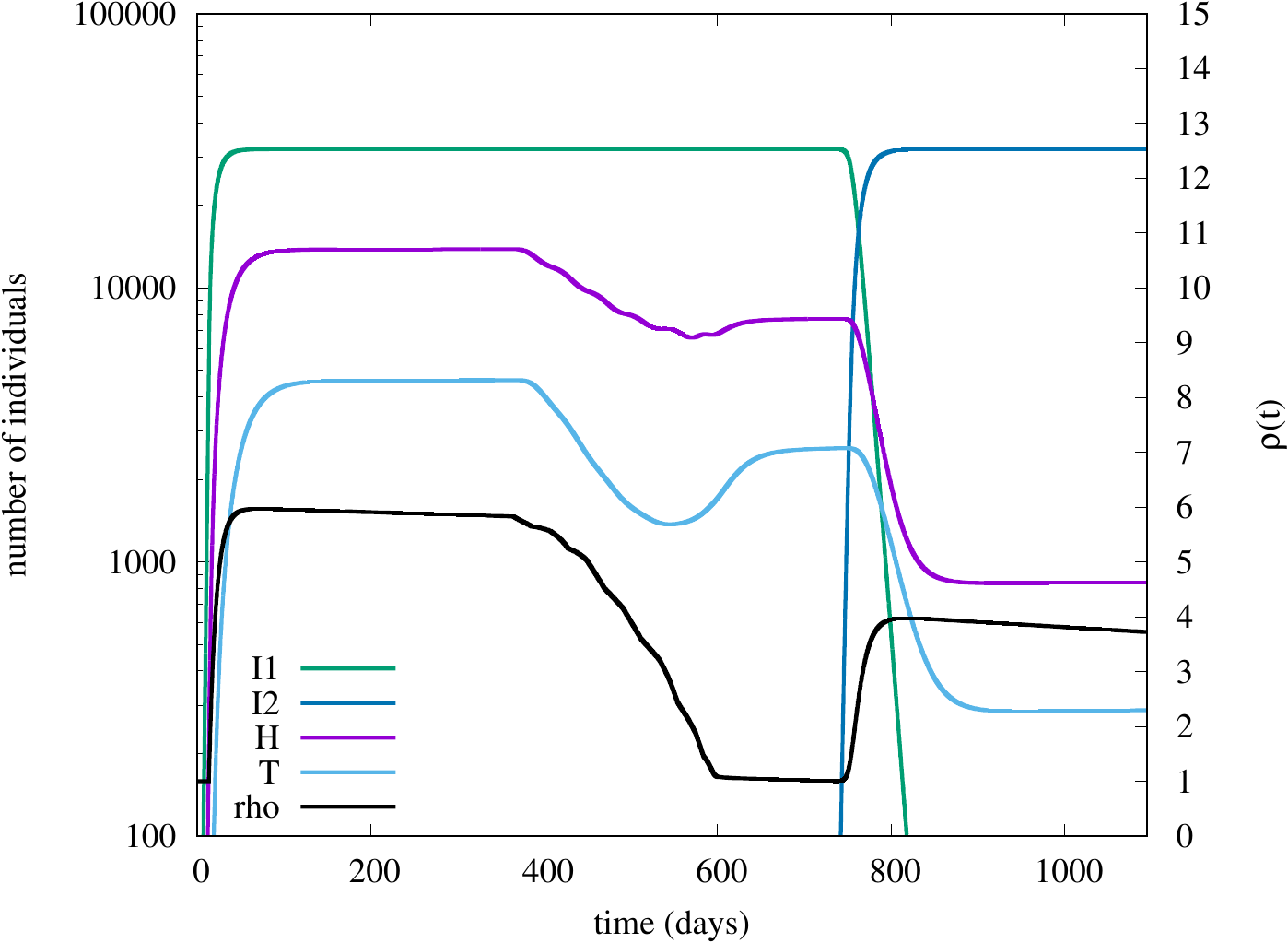}&
			\includegraphics[width=0.62\columnwidth]{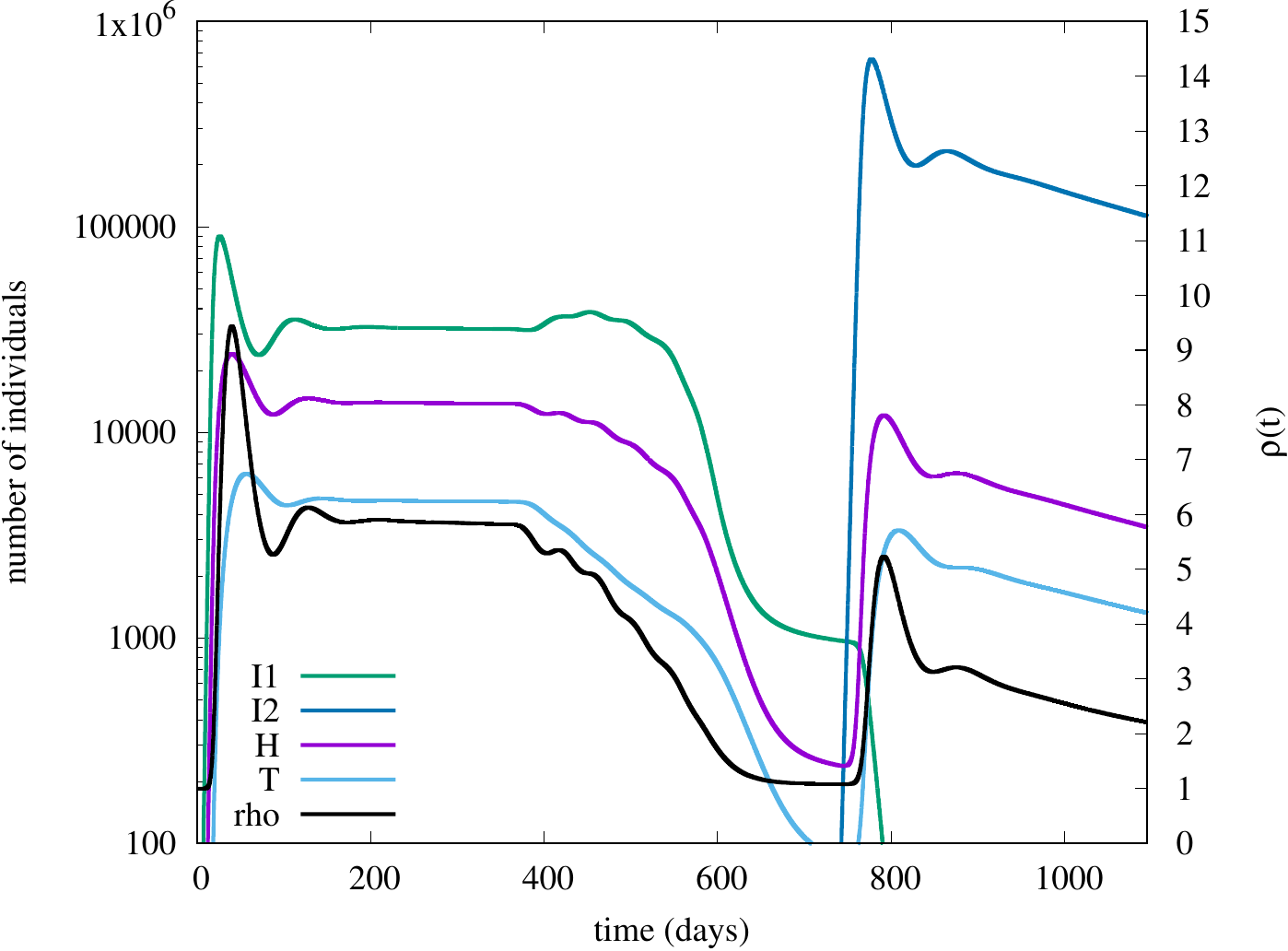}&
			\includegraphics[width=0.62\columnwidth]{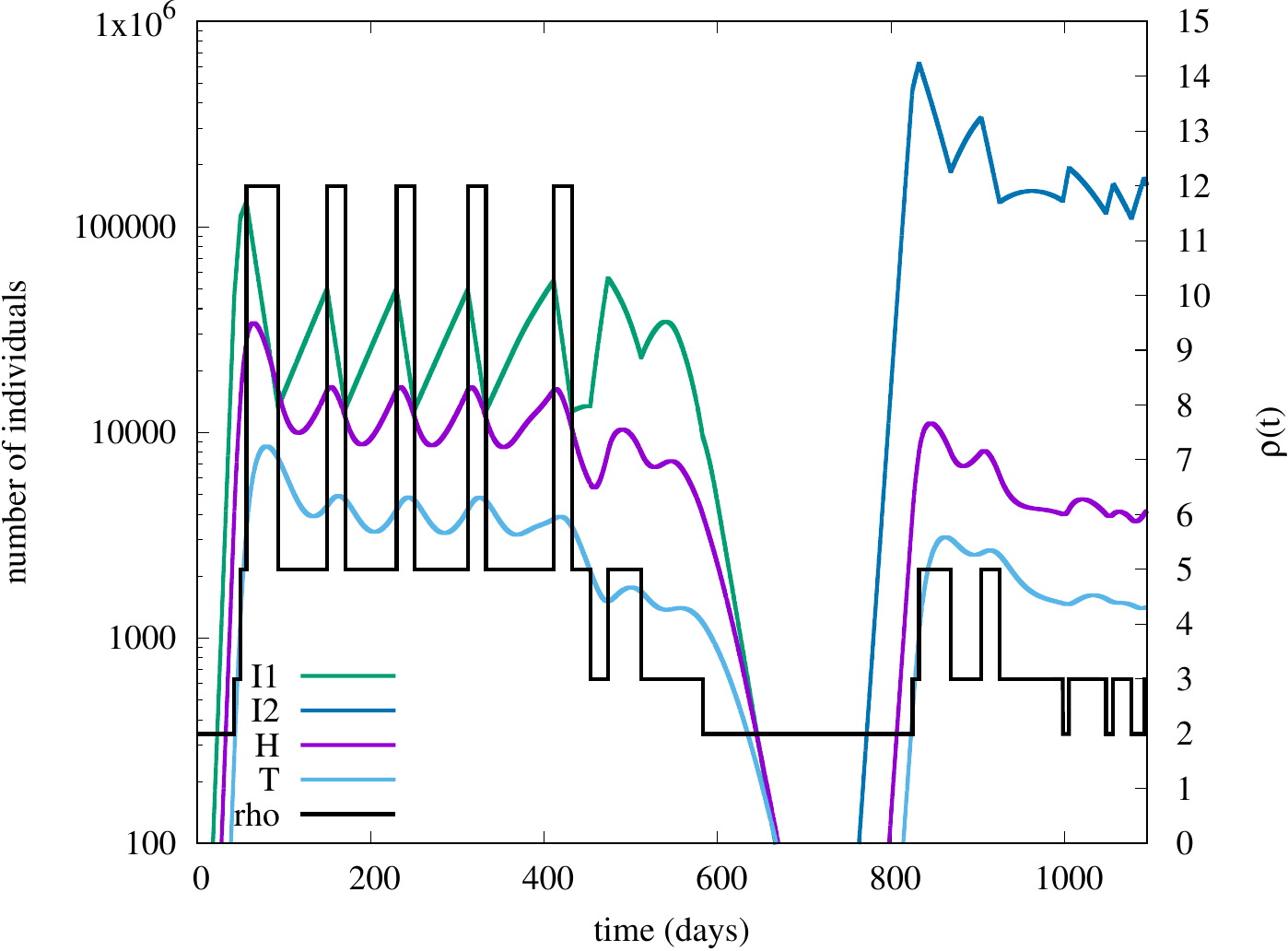}
			\\
			(a) Rate Control, $ \lambda_C=4000$.&
			(b) HT control: $T_{\max}=\widehat{T}=20k$;&
			(c) {Imperfect} HT control.\\
			& $H_{\max}=40k$, $\widehat H=100k$. &
		\end{tabular}
		\caption{Evolution of $I(t)$, $H(t)$, $T(t)$ (left $y$ axes) 
			and $\rho(t)$ (right $y$ axes) in the comprehensive  scenario.} 
		\label{fig:comprehensive}
	\end{figure*} 
	
	Figure \ref{fig:comprehensive}(a) and Figure \ref{fig:comprehensive}(b) report the evolution of the metrics, respectively, for the case in which the control is on the rate of new infected ($\lambda_C=4,000$)  and the HT (with $T_{\max}=20k$ and $H_{\max}=40k$).   Parameters have been set so that the two controls operate around approximately the same operational point during the first year.
	
	Rate control appears more reactive in the early phase of the epidemic. As already observed, due to its intrinsic delay in the control ring,  HT control exhibits some initial oscillations, which are not observable when rate control is applied. Therefore, it should not be surprising that rate control leads to better performance indices at the end of the first year, as shown in Table  \ref{table2}. \textcolor{black}{Note that costs are expressed in arbitrary units, while deaths are expressed in thousands.}
	However, when the second variant starts spreading,  the rate-control strategy may overreact, forcing the system to work in over-restricted conditions for quite a long time (note that at the end of the three-year period, rate-control is far from being completely relieved).
	Instead, HT control can automatically adjust its operational point as an effect of the mutated environmental conditions, i.e., a smaller intrinsic lethality index of the variant and a significant fraction of vaccinated individuals who are protected against severe outcomes.
	
	\textcolor{black}{We remark that these strategies, which tightly and precisely control either the infection rate or the hospitalization/ICU occupancy, are hardly implementable. However, they provide valuable insights. To shed light on more practical controls, we examine an implementable rough version of the HT control,  denoted as Imperfect HT (IHT).} Figure \ref{fig:comprehensive}(c) shows the evolution of the epidemic when the IHT strategy is adopted. In this case, the control dynamically selects the current alert level from the following finite set {\it green, white, yellow, orange, red, purple}. A different set of non-pharmaceutical restrictions corresponds to every alert level, determining a corresponding value of $\rho(t)\in \{1,2,3,5,12,15\}$  (note that intermediate values of $\rho(t)$  corresponding to different alert levels, do not need to be perfectly known).
	Every week a simple threshold mechanism is implemented to establish the current alert level for the following week,  with normalized thresholds (with respect to $H_{\max}$ or  $T_{\max}$) set respectively to $\{ 0.01, 0.1, 0.2, 0.4, 1.0 \}$.  Any alert level must be maintained for at least three weeks before it can be decreased. Despite the behavior of IHT  does not significantly deviate from HT, a high extra economic cost is paid for the effect of unavoidable oscillations between consecutive alert levels, especially for large values of $\alpha$.

	\begin{table}
		\centering
		\caption{Comparison of control strategies in a  comprehensive scenario: ecomomic costs and deaths}
		\label{table2}
		\scalebox{0.8}{%
			\begin{tabular}{|c|ccc|c|ccc|c|}
				\hline\hline
				&  \multicolumn{4}{c|}{1st year}  &  \multicolumn{4}{c|}{ three years } \\
				&  \multicolumn{3}{c|}{ Cost}   &  Deaths& \multicolumn{3}{c|}{ Cost}  &  Deaths\\
				\hline
				$\alpha$ 	& 1 & 2 & 3  &   & 1 & 2 & 3 &    \\
				\hline
				%1.512751e+04
				\text{rate}   &  1.68 &  8.18 & 39.9       & 41.1 	  &  3.35  & 13.5 &  58.0    &  70.5    \\
				\text{HT
				} &   1.76 &  9.22 &   50.2 &   45.8     & 3.19  & 13.4  &     64.6     &   78.8  \\
				\text{IHT} &  2.02  & 15.5&   146 & 40.6   & 3.71& 22.1  &  187  &   68.7 	  \\
				\hline
				\hline 
		\end{tabular}}
	\end{table}

	\medskip
\section{Conclusions}\label{sect:conc}
	{\color{black}{Drawing inspiration from the COVID-19 pandemic, our study utilized the standard compartmental approach of mathematical epidemiology, incorporating a data-informed population stratification. The objective was to investigate the planning problem associated with implementing pharmaceutical and non-pharmaceutical interventions in order to minimize both economic costs and deaths within a country-sized community.
			
			Through our analysis, we discovered that control strategies based on either the infection rate or the current load of hospitalizations/ICU can be highly unstable, particularly in non-ideal and realistic conditions. Even under ideal and stable conditions, the complexity of the problem is evident due to the intricate interplay of multiple factors and the influence of various parameters that are often unknown to policymakers.
			
			Our study emphasizes the need for more robust control strategies that can effectively tackle future pandemics. The complexity of the problem, combined with the uncertainty surrounding key parameters, highlights the importance of designing control strategies that are resilient and adaptable in the face of unforeseen challenges. By employing a data-informed population stratification and considering both pharmaceutical and non-pharmaceutical interventions, our work contributes to the broader understanding of epidemic planning.
	}}

%%%%%%%%%%%%%%%%%%%%%%%%%%%%%%%%%%%%%%%%%%%%%%%%%%%%%%%%%%%%%%% APPENDIX (SUPPLEMENTAL MATERIAL) %%%%%%%%%%%%%%%%%%%%%%%%%%%%%%%%%%%%%%%%%%%%%%%%%%%%%%%%%%%%%%%%%

\appendices
	
\section{Motivation}

	%In this supplemental material,
	This appendix complements the main article with additional details and discussion. {\color{black} We begin with a review of some basic epidemiological models and provide a non-comprehensive but detailed literature overview of epidemiological models. The main article provides a more concise yet comprehensive literature review. Then,} we describe in more detail the structure of the proposed model, {\color{black}present its various formulations, and} highlight the main distinguishing features from existing compartmental models describing the dynamics of COVID-19. 
	We also explain how we obtained the required distributions to be fed into the model  {\color{black}from real data. We also discussed the choice of parameters used to describe the dynamics of the COVID-19 epidemic.}
	We may repeat some concepts and formulas introduced in the main article to make the appendix self-contained.

\section{Base model}
	{\color{black}\subsection{SIR-like models - A brief literature review}}
	The so-called SIR model \cite{KermackMcKendrick1927} is paradigmatic in epidemiology, and it has been widely adopted to model infectious diseases for which recovered individuals acquire lasting or at least sufficiently durable immunity. In particular, the SIR model and its extensions have proven useful in modeling the dynamics of epidemic diseases such as seasonal influenza \cite{SIR_seasonal_flu} and swine flue \cite{Aldila2014}. Since the outbreak of COVID-19 in late 2019, it has been an effective tool for studying the spread of the novel coronavirus. This model is the prototype of a broader class of models that partition the population according to disease status, called compartmental models \cite{Hethcote2000}. One of the keys to the success of the SIR model is its simplicity, considering only three compartments: Susceptibles {\color{black}$S(t)$}, Infected {\color{black}$I(t)$} and Removed {\color{black}$R(t)$}, {\color{black} whose dynamics are described by the following system of ordinary differential equations:}
	{\color{black}
		\begin{align}
			\frac{dS(t)}{dt} &= -\beta \cdot \frac{S(t) \cdot I(t)}{N} \nonumber\\
			\frac{dI(t)}{dt} &= \beta \cdot \frac{S(t) \cdot I(t)}{N} - \gamma \cdot I(t) \\
			\frac{dR(t)}{dt} &= \gamma \cdot I(t) \nonumber
		\end{align}
	}
	{\color{black}$N$ in the system of equations represents the total number of individuals in the population. $\beta$ is a fundamental parameter and indicates the average number of contacts per person per time. This factor multiplies the term $\frac{S(t)I(t)}{N}$, which is linked with the probability of a virus transmission event (i.e., an infectious individual infects a susceptible one), assuming homogeneous mixing of the population. The parameter $\gamma$ indicates the rate at which an individual exits the infectious state, either by recovering from the disease or dying. These \textit{compartmental} models are better represented by block diagrams which highlight the transitions among states. For example, Figure \ref{fig:SIR_SEIR_SIMD} depicts the SIR model (A) and some of its extensions (B-C) which we discuss in the following.}
	
	\begin{figure} [h!]
		\centering
		\includegraphics[width=.8\linewidth]{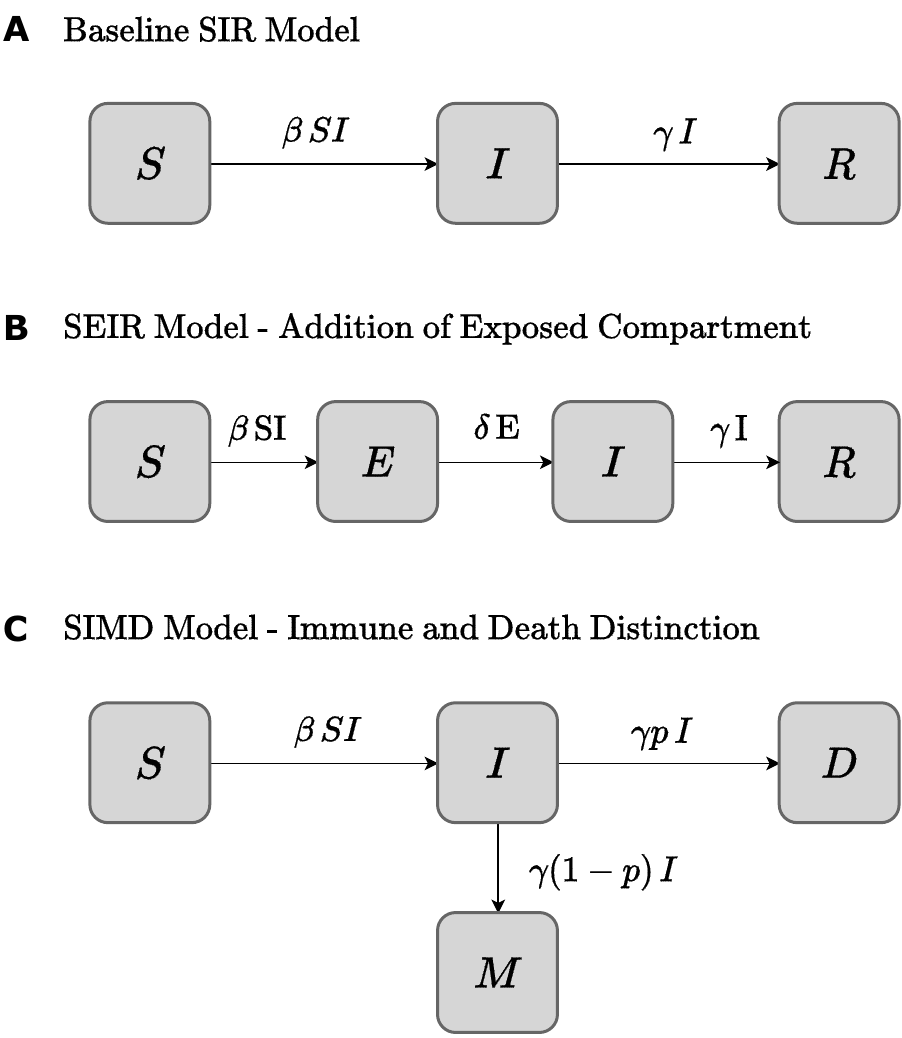}
		\caption{Diagrams highlighting the transitions among compartments for the SIR, SEIR and SIRD models introduced in this section.}
		\label{fig:SIR_SEIR_SIMD}
	\end{figure}
	
	However, the model's simplicity comes at the expense of oversimplifying the complexities of the disease processes \cite{JAMA_Tolles2020}. For example, it assumes a homogeneous mixing of individuals and therefore does not consider the correlation between daily contacts and {\color{black}other specific characteristics of the population (e.g., age).}
	In \cite{Vespignani_contacts}, age-specific contact patterns are derived for different countries, which allows for a more accurate description of the population's interactions. The population is no longer considered uniform but grouped according to the individual's age. Moreover, the population is assumed to be closed, with no in-migration or out-migration. {\color{black}Accounting for this would not be complicated. It is sufficient to have information regarding a particular population's birth and death rates, together with immigration (individuals coming to the population from another population) and emigration information. In most cases, these effects almost balance out, leaving the overall number of individuals approximately the same. Indeed, the} closed population assumption is reasonable as long as the time horizon is not too long (a few years).
	In addition, the SIR model does not account for the period during which an individual has been exposed to the virus but does not yet have sufficient infectious levels for transmission to others. A straightforward extension is to {\color{black}incorporate an additional compartment}, usually denoted with E (standing for Exposed), which defines what is known as the SEIR model \cite{LIN2020211}, Figure \ref{fig:SIR_SEIR_SIMD} (B). {\color{black}This only represents a delay in the dynamics for those individuals who have been in contact with the virus resulting in an infection and will, after $\frac{1}{\delta}$ units of time on average, become infectious for others.} Another straightforward extension of the basic SIR model is {\color{black}is to distinguish in the \textit{Removed} compartment between those who die as a result of the virus (Deaths) and those who acquire immunity against the virus (iMmune). Thus the model is referred to as SIMD, Figure \ref{fig:SIR_SEIR_SIMD} (B). It is sometimes called SIRD in the literature, depending on the convention used to indicate immune individuals, i.e., $M$ from iMmune or $R$ from Removed. The model extension is relatively simple and only relies on knowing the probability $p$ with which an infected individual dies after infection. As it is clear from the block diagram, this is achieved by simply multiplying the rates by the \textit{probability of dying}.} Models that account for both aspects have long been used in the literature. Much of the most recent works on \mbox{COVID-19} \cite{10.1371/journal.pone.0230405}\cite{9143144} use SIR-like models and even adds additional compartments, such as one for asymptomatic individuals, i.e., infected individuals who do not manifest any COVID-19-specific symptoms{\color{black}, and are potentially more dangerous than symptomatic individuals who may reduce their contacts rate due to the insurgence of symptoms}.
	
	\begin{figure} [h!]
		\centering
		\includegraphics[width=0.85\linewidth]{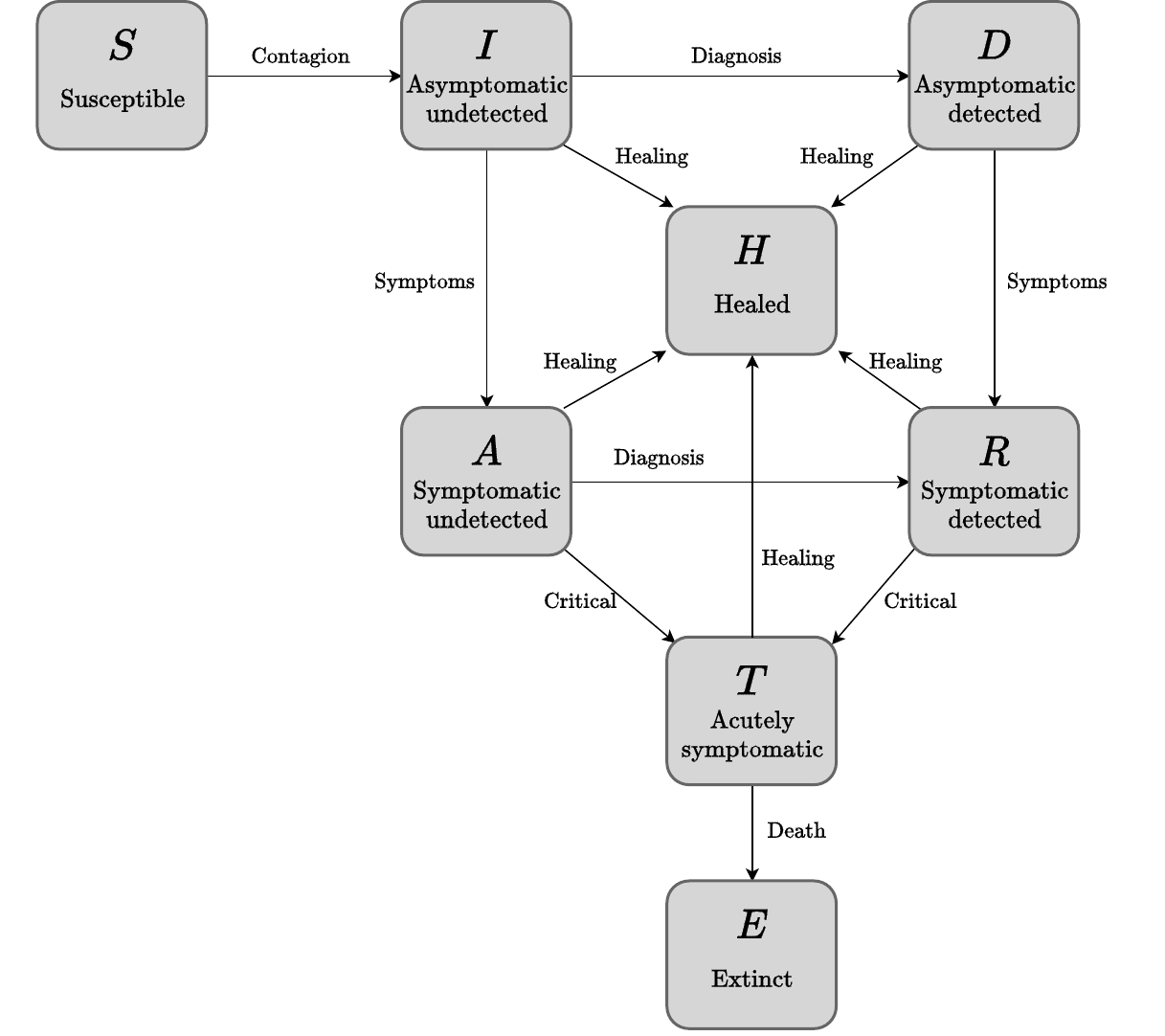}
		\caption{\textcolor{black}{Schematic representation of the SIDARTHE model, similar to that presented in \cite{Giordano2020ModellingTC}. We did not report the transition rates as we do not show the model's equations, the word description over the arrows provides more intuition.}}
		\label{fig:SIDARTHE}
	\end{figure}
	
	{\color{black}
		One of the most comprehensive models recently appeared in the literature to model COVID-19 \cite{Giordano2020ModellingTC} considers eight states that distinguish between detected and undetected infectious cases, varying severity of illness (symptomatic and asymptomatic cases), non-life-threatening cases and potentially life-threatening cases requiring ICU admission. It is called the SIDARTHE model, and we present its block diagram in Figure \ref{fig:SIDARTHE}. The states are indeed susceptible (S), infected (I), diagnosed (D, which represents detected asymptomatic cases ), ailing (A, more severe, i.e., symptomatic, cases which have not been detected), recognized (R, symptomatic cases detected), threatened (T, acute symptomatic detected), healed (H) and extinct (E, death as a result of severe infection). This model has been purposefully developed to capture the peculiarities of COVID-19 (e.g., the distinction made between detected and undetected cases) and fits well data related to COVID-19 spread in Italy. With respect to the model we present in the next section, this approach does not consider heterogeneity in the population and the epidemiological states are specific for COVID-19, our approach aims at being more general. 
		
		Another fundamental aspect that has not been considered in the models above is the effect of vaccinations on the dynamics. In the case of COVID-19, vaccines started to become available towards the end of 2020, and great debate sparked around how vaccines should be prioritized. Our model also aims to answer this question and provide valuable insights into the possible trade-offs the decision-maker could face. Clearly, in the literature, there already exist models that consider vaccinations. For example \cite{Bubar2021} distinguishes between vaccinated who are protected by the vaccine, vaccinated without protection against COVID-19, and unvaccinated because of a positive serotest (note that upon recovery, infected individuals acquire immunity towards the virus) or refusal to vaccinate (i.e., no-vax). We consider these aspects in our model in one of its extensions presented in the next section and depicted in Figure~\ref{fig:our_model_vax}.
		
	}
	
	\begin{figure}[h!]
		\centering
		\includegraphics[width=.7\columnwidth]{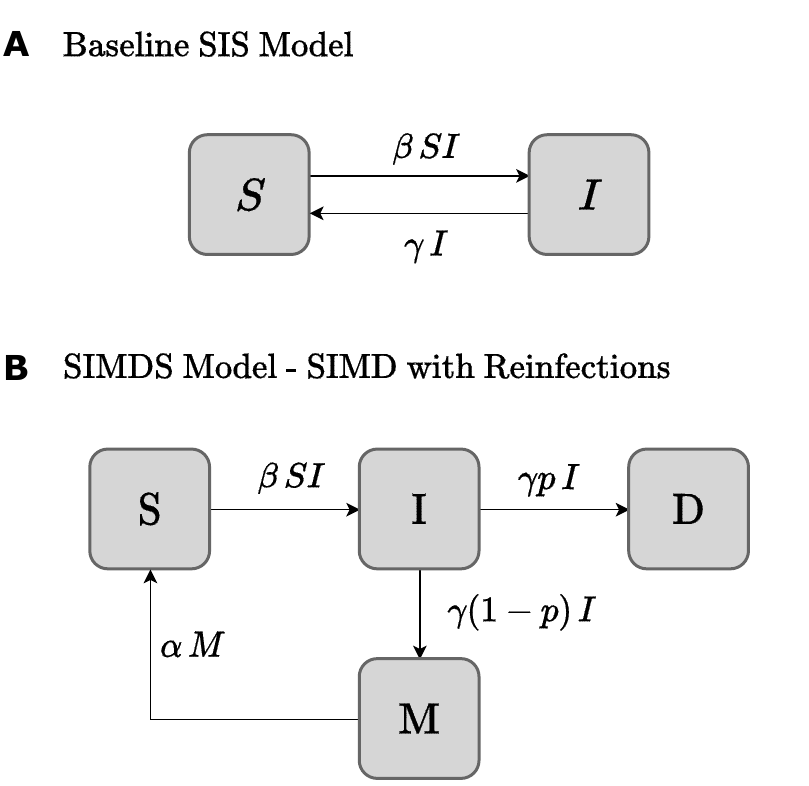}
		\caption{Schematic representation of models which consider reinfections, the classical SIS and a SIMDS model.}
		\label{fig:SIS_SIMDS}
	\end{figure}
	
	One final observation: the SIR (and SIR-like) underlying assumption of long-lasting immunity does not apply to all viruses. In the case of SARS-CoV-2, it is not yet clear how long immunity to the virus will last. Townsend et al. \cite{Townsend2021} claim that \lq\lq reinfection by SARS-CoV-2 under endemic conditions would likely occur between 3 months and 5 years  after peak antibody response". The susceptible-infected-susceptible (SIS, see block diagram in Figure \ref{fig:SIS_SIMDS} (B)) model is the most simple model when reinfection comes into play. Namely, infected individuals do not acquire immunity to the virus and move back into the susceptible compartment upon recovery. {\color{black}The SIS model assumes that no protection agains the virus is acquired upon infection and, at the end of the acute phase, an individual could be \textit{immediately} be reinfected again. The SIMDS model, which is a refined version of the SIS model and extends the SIMD model, encompass the possibility of a \textit{temporary} immunity against the virus. This is captured by the $M$ state (iMmune), where individuals do not contract the virus due to the obtained immunity. After an exponential time with parameter $\alpha$, an immune individual looses its protection and returns into the susceptible state. This framework seems to be more appropriate in the context of COVID-19, since it appears that the acuired immunity is only temporary.}
	
	\subsection{Modeling objectives}
	
	{\color{black}
		The epidemiological model we propose aims to provide a flexible framework that is general enough to be used for a large set of pandemics and also includes all the necessary ingredients discussed above, which are paramount to proposing and assessing both non-pharmaceutical and pharmaceutical interventions. We consider COVID-19 as a use case of the model but argue that it can represent other pandemics by appropriately setting the parameters. The proposed model considers different levels of severity of the disease, considering \textit{hospitalized} individuals ($H$) and those who need to be under intensive care ($T$). It considers the possible loss of immunity, with a rate of $\mu$, and, in its most general version (see Section \ref{sec:full_model}), the vaccination process, distinguishing between individuals who have undergone \textit{partial} and \textit{complete} vaccination. One of the distinguishing features of our model lies in the distribution $f_{r,p}$. It characterizes population heterogeneity in an even broader sense than what we will later use to construct the distributions from actual data. This distribution relates the \textit{risk exposure} $r$ and the \textit{mortality rate} $p$. While we will consider the risk exposure mainly as a function of age, this concept could incorporate other aspects such as occupation, personal habits, existing health conditions, etc. This choice depends on the data availability and the objective of the study. Indeed, in contrast to previous work \cite{Goldsteine21}, \cite{galvani09}, \cite{matrajt21}, \cite{Sandmann2021}, \cite{monod21}, \cite{SaadRoy2021},\cite{prio21}, we propose a simple modeling framework that explicitly represents the heterogeneity of risk exposure across different population segments.
		Moreover, our modeling framework describes a pandemic in a tightly controlled setting, in which non-pharmaceutical (e.g., social distancing, lockdowns) and pharmaceutical (i.e., vaccinations) interventions are naturally present in the framework to describe a pandemic in all its various phases effectively.
	}
	
	\subsection{Proposed dynamics}\label{sec:model}
	
	We propose a new compartmental model enriched by: i) population heterogeneity in terms of mortality rate and risk exposure, ii) closed-loop control of the epidemiological curve, and iii) progressive vaccination of individuals.
	We start by describing the base version of our compartmental model
	to describe the spread of a disease in a non-homogeneous population of size $N$ in the absence of any intervention (either pharmaceutical or non-pharmaceutical).
	
	We consider six epidemiological states {\color{black} for each population class $(r,p)$}: let $S_{r,p}(t)$, $I_{r,p}(t)$, $M_{r,p}(t)$, $H_{r,p}(t)$, $T_{r,p}(t)$, and $D_{r,p}(t)$ denote the number of individuals characterized by $(r,p)$ who at time $t$ are, respectively, susceptible, infected, immune, hospitalized, under intensive treatment and dead{\color{black}.The variable $r$represents the risk exposure:}
	individuals with high $r$ have a higher risk of infection and a higher probability of transmitting the disease. While $p$ represents the mortality rate and is associated with the \textit{vulnerability} to the considered disease of a certain segment of the population.
	
	The amount of time spent by an individual in the infected, hospitalized, intensive therapy, immune compartment is exponentially distributed 
	with mean value $1/\gamma$, $1/\phi$, $1/\tau$, $1/\mu$, respectively.
	
	The system dynamics are described by the following set of ordinary differential equations:
	\begin{align}\begin{split}\label{eq:dyn_sys_sm}
			\dot S_{r,p}(t) & = -{\sigma(t)}
			\left( \sum_{r',p'} r' I_{r',p'}(t) \right)
			\frac{r S_{r,p}(t)}{\sum_{r',p'} r' N f_{r',p'}} + \mu M(t) \\
			\dot I_{r,p}(t) & = {\sigma(t)}
			\left( \sum_{r',p'} r' I_{r',p'}(t) \right)
			\frac{r S_{r,p}(t)}{\sum_{r',p'} r' N f_{r',p'}}
			- \gamma I_{r,p}(t)  \\
			\dot H_{r,p}(t) & = \gamma p_{r,p}^{IH}  I_{r,p}(t) - \phi H_{r,p}(t)  \\
			\dot T_{r,p}(t) & = \phi p_{r,p}^{HT} H_{r,p}(t) -\tau T_{r,p}(t) \\
			\dot D_{r,p}(t)   &= \tau p_{r,p}^{TD}(t) T_{r,p}(t) \\
			\dot M_{r,p}(t)  &= 
			\gamma  (1-p_{r,p}^{IH}) I_{r,p}(t) 
			+ \phi (1-p_{r,p}^{HT}) H_{r,p}(t) \\
			& + \tau (1-p_{r,p}^{TD}(t)) T_{r,p}(t)
			-\mu M(t)
		\end{split}
	\end{align}
	
	where 
	$\sigma(t) \geq 0$
	is a function representing all exogenous (uncontrolled) factors that change the strength of the infection, such as seasonal effects.
	In this paper, we will assume for simplicity that $\sigma(t) = \sigma$ 
	is constant. The total number of susceptible people is $S(t) = \sum_{r,p} S_{r,p}(t)$, and, similarly, we introduce the total number of people in the other compartments: $I(t)$, $H(t)$, $T(t)$, $M(t)$, $D(t)$.
	Probabilities $p_{r,p}^{IH}$, $p_{r,p}^{ HT }$ and $p_{r,p}^{ TD }(t)$
	denote the probability that an individual of type $(r,p)$ moves between the two compartments indicated in the apex.
	We make probability $p_{r,p}^{ TD }(t)$ depend on $T(t)$, i.e., on the instantaneous total number of people in ICUs, since the death probability dramatically increases when ICUs are saturated.
	Denoted with $\widehat T$ the number of available ICUs,
	when $T(t) \le \widehat{T}$, the overall death probability of an infected person is assumed to be equal to $p$: 
	\begin{equation}\label{eq:prodp_sm}
		p_{r,p}^{IH} \cdot p_{r,p}^{HT} \cdot \hat{p}_{r,p}^{TD} = p \qquad 
		\text{if }T(t) \le {\widehat T}, %T_{\max} 
	\end{equation} 
	where $\hat{p}_{r,p}^{TD}$ is the probability
	to transit from state $T$ to state $D$ in \textit{normal} conditions, i.e., when
	$T(t) \le %T_{\max}  
	{\widehat{T}}$. Therefore, $p_{r,p}^{TD}(t) = \hat{p}_{r,p}^{TD}$ as long as
	$T(t) \le %T_{\max}  
	{\widehat{T}}$.   
	
	\begin{figure*}[h!]
		\centering
		\includegraphics[width=0.7\textwidth]{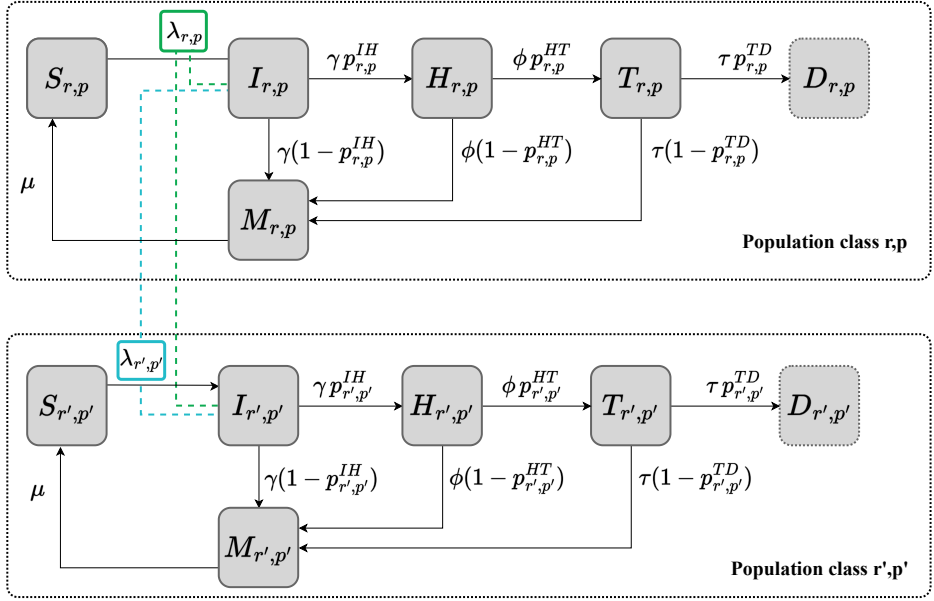}
		\caption{\textcolor{black}{Schematic representation of the proposed model without vaccination and \lq\lq additional" transitions from the states $I$ and $D$ to $H$. In the figure it si highlighted that the \textit{infection rates} in different population classes are dependent on each other (see system of equations (\ref{eq:dyn_sys_sm})).}}
		\label{fig:our_model_sm}
	\end{figure*}
	
	When $T(t) > %T_{\max}
	{\widehat{T}}$, we assume that the death probability of people who cannot receive the required intensive therapy is increased by a factor $\theta$,
	hence $p_{r,p}^{ TD }(t)$ is dynamically adjusted as follows:
	\begin{multline}
		p_{r,p}^{TD}(t) = \hat{p}_{r,p}^{TD} \frac{ {\widehat{T} }   } {T(t)} +  
		\min\{1,\theta \cdot \hat{p}_{r,p}^{TD}\} 
		\frac{T(t)-{\widehat{T}} }  {T(t)} .
		% \qquad \text{if }T(t) > {T_{\max} 
			\label{eq:pTD_sm}
	\end{multline}

We consider the case in which the individuals might lose immunity with rate $\mu$, thus becoming susceptible again. 
It should be noticed that the mass preservation 
$\dot S_{r,p}(t) +\dot I_{r,p}(t) +\dot M_{r,p}(t) + \dot H_{r,p}(t) + \dot T_{r,p}(t) + 
\dot D_{r,p}(t)=0 $ is satisfied for all $t\geq0$.

{\color{black}
	The presented model is an extension of the standard SIR model, and this observation can become more evident by looking at the block diagram in Figure \ref{fig:our_model_sm}. To make it even more straightforward, it is possible to rewrite the first two equations in the system of equations \eqref{eq:dyn_sys_jump} defining $\beta_{r\leftrightarrow r'}~:=~\sigma(t) \frac{r'r}{\sum_{r'',p''} r'' N f_{r'',p''}}$:
	\begin{align}\begin{split}\label{eq:dyn_sys_jump_new}
			\dot S_{r,p}(t) & = -\sum_{r',p'} \beta_{r \leftrightarrow r'} I_{r',p'}(t) S_{r,p}(t) + \mu M(t) \\
			\dot I_{r,p}(t) & = \sum_{r',p'} \beta_{r \leftrightarrow r'} I_{r',p'}(t) S_{r,p}(t)
			- \gamma I_{r,p}(t)  \\
		\end{split}
	\end{align}
	
	where the newly introduced parameter $\beta_{r\leftrightarrow r'}$ represents the pattern of interaction between population segments with risk exposure $r$ and $r'$. %, we remark this value does not depend on the \textit{mortality rate}.
	
}

The total (uncontrolled) rate of new infections is equal to
$$\lambda_{\textcolor{black}{U}}(t) = \sigma(t) \left( \sum_{r,p} r I_{r,p}(t) \right)
\frac{\sum_{r,p} r S_{r,p}(t)}{\sum_{r,p} r N f_{r,p}}, $$

{\color{black}
	Or, by employing the interaction parameter $\beta_{r \leftrightarrow r'}$:
	\begin{equation*}
		\lambda_U(t) = \sum_{r,p} \sum_{r',p'} \beta_{r \leftrightarrow r'} I_{r',p'}(t) S_{r,p}(t)
	\end{equation*}
	\subsubsection{First extension - Adding transitions to the base model}
	One could also consider the possibility that infected people die without being hospitalized or undergoing intensive treatment. This extension requires the specification of transition probabilities $p_{r,p}^{ID}$ and $p_{r,p}^{HD}$ associated with direct transitions from state $I$ (state $H$) to state $D$, respectively, representing the occurrence of premature death events. Previous probabilities $p_{r,p}^{IH}$ and $p_{r,p}^{ HT }$ are then redefined as transition probabilities conditioned to the event that such premature deaths do not occur.
	
	The model is depicted in Figure \ref{fig:our_model_jumps} and the modified system dynamics would be:
	
	\begin{align}\begin{split}\label{eq:dyn_sys_jump}
			\dot S_{r,p}(t) & = -{\sigma(t)}
			\left( \sum_{r',p'} r' I_{r',p'}(t) \right)
			\frac{r S_{r,p}(t)}{\sum_{r',p'} r' N f_{r',p'}} + \mu M(t) \\
			\dot I_{r,p}(t) & = {\sigma(t)}
			\left( \sum_{r',p'} r' I_{r',p'}(t) \right)
			\frac{r S_{r,p}(t)}{\sum_{r',p'} r' N f_{r',p'}}
			- \gamma I_{r,p}(t)  \\
			\dot H_{r,p}(t) & = \gamma (1-p_{r,p}^{ID})p_{r,p}^{IH}  
			I_{r,p}(t) - \phi H_{r,p}(t)  \\
			\dot T_{r,p}(t) & = \phi (1-p_{r,p}^{HD}) p_{r,p}^{HT} H_{r,p}(t) -\tau T_{r,p}(t) \\
			\dot D_{r,p}(t)   & = \tau p_{r,p}^{TD}(t) T_{r,p}(t) 
			+ \gamma p_{r,p}^{ID} I_{r,p}(t) + 
			\phi p_{r,p}^{HD} H_{r,p}(t) \\
			\dot M_{r,p}(t)  & = 
			\gamma  (1-p_{r,p}^{ID})(1-p_{r,p}^{IH}) I_{r,p}(t) \\
			& + \phi (1-p_{r,p}^{HD})(1-p_{r,p}^{HT}) H_{r,p}(t) \\
			& + \tau (1-p_{r,p}^{TD}(t)) T_{r,p}(t)
			-\mu M(t)
		\end{split}
		\vspace{-2mm}
	\end{align}  
	
	Note that in this case we must assure that the overall 
	death probability $p$ satisfies (for $T(t) \le {\widehat T}$)  
	\begin{equation}\label{eq:prodpjump}
		p_{r,p}^{ID} + (1- p_{r,p}^{ID}) p_{r,p}^{IH} \left[
		p_{r,p}^{HD} + (1- p_{r,p}^{HD}) p_{r,p}^{HT} \hat{p}_{r,p}^{TD}\right] 
		= p 
	\end{equation} 
	
	which replaces \equaref{prodp}.
}

\begin{figure*}[h!]
	\centering
	\includegraphics[width=0.75\textwidth]{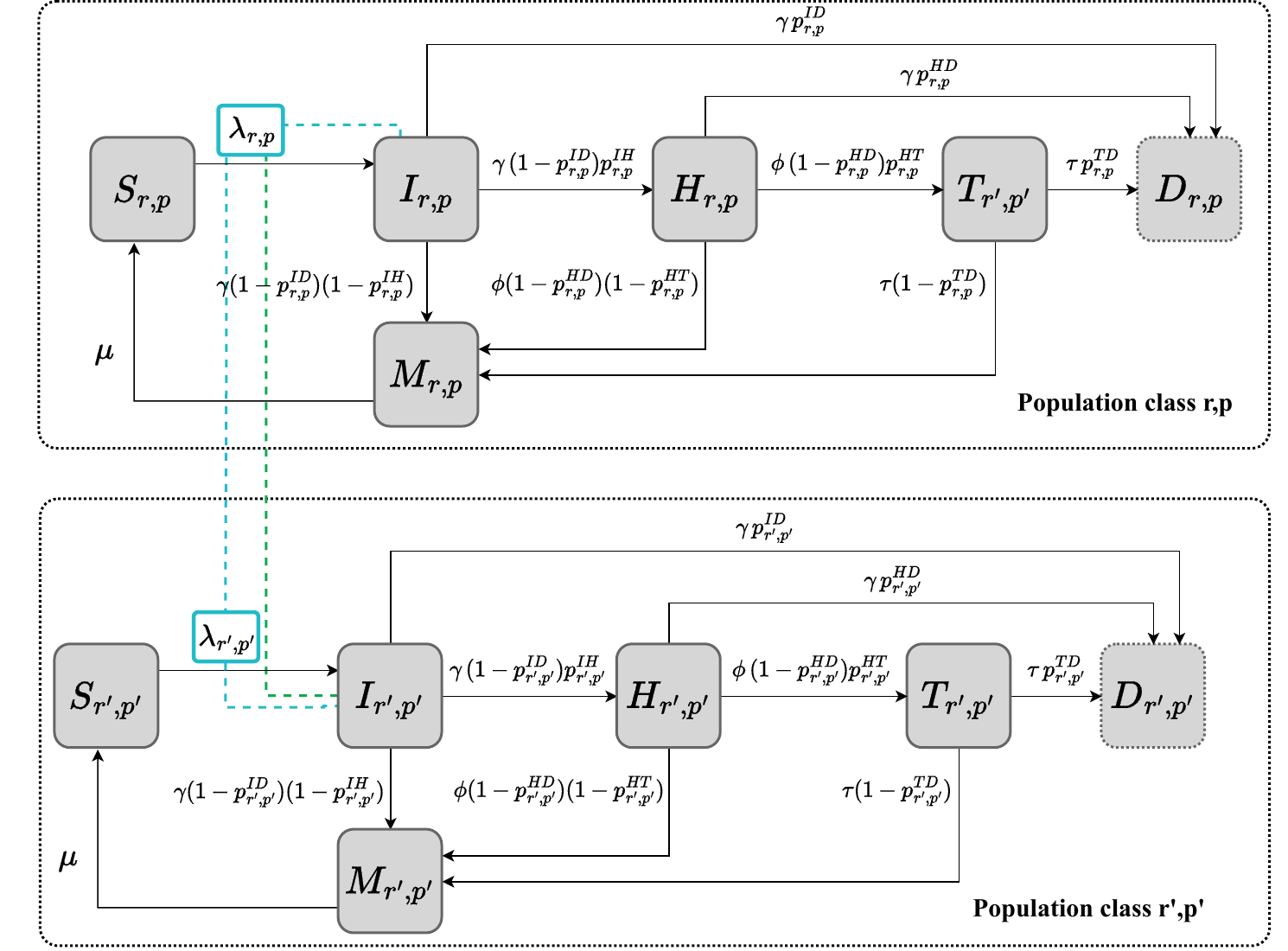}
	\caption{\textcolor{black}{Schematic representation of the proposed model without vaccination adding transitions from the \textit{Infected} ($I$) and \textit{Hospitalized} stets to the \textit{Death} ($D$) state, as detailed by the system of equations (\ref{eq:dyn_sys_jump}). This model is rather similar to that in Figure \ref{fig:our_model_sm}, only two transitions for each $\{r,p\}$ individual's class have been added.}}
	\label{fig:our_model_jumps}
\end{figure*}

{\color{black}
	As a final remark, we point out that while this extension is straightforward, it introduces the need for additional parameters which could be difficult to be determined. Moreover, models such as the SIDARTHE \cite{Giordano2020ModellingTC}, discussed above and explicitly developed for COVID-19, do not consider such transitions. Lastly, there is evidence that the majority of deaths due to COVID-19 have occurred in hospitals or care homes \cite{ONS2023}. Therefore, we employed the first version of the model in our simulations.
}

\section{Modeling population heterogeneity}
\label{secII}
\subsection{Data-driven Approach}
The model proposed in this work assumes the knowledge of some characteristics of the population involved in the epidemiological process. At the country level, populations present notable differences in their features, for instance, in terms of age distribution, overall health condition, or the daily contacts among individuals, which in turn depend on the country's customs and, more broadly, on the country's wealth and employment conditions. The heterogeneity in the population may play a role in the disease transmission process. Indeed, different contact patterns among individuals may result in a faster virus outbreak. To account for this heterogeneity, we characterized the population through a distribution function $f_{r,p}$ (equivalently $f_{r, a}$, where $a$ is the age, assuming a one-to-one correspondence between the age $a$ and $p$, see sec. \ref{sec:frp}) whose parameters represent the average daily contacts of an individual with other individuals in the population $r$ and the probability of dying $p$ as a consequence of the contraction of COVID-19. The parameters on which these variables depend are numerous (i.e., gender, occupation, individual habits, and pre-existing medical conditions, to name a few); a precise population characterization is outside this work's scope. We considered the individual's age as the main factor on which the average daily contacts $r$ and the probability of death $p$ depend. Thus, we stratified the population by age and derived country-specific distributions exploiting the contacts patterns presented in \cite{Vespignani_contacts} and the case fatality rate from \cite{CFR2020}, which have both been obtained for various age classes. The construction procedure of the function $f_{r,p}$ is discussed in depth in the following paragraphs.

\subsection{Chance of death due to COVID-19}\label{sec:p}

One of the characterizing parameters of our population is the probability of dying after infection $p$, for which it is crucial to have an estimation. To this end, we considered the Case Fatality Rate (CFR); see Table I for the values we used, which refer to Italy. The CFR represents the proportion of deaths due to a specific disease compared to the total number of people diagnosed with that disease in a certain period. The data we leveraged come from \cite{CFR2020} in which CFR values have been obtained for 10-year-wide age classes considering Italian COVID-19 epidemiological data by 18th August 2020.

An observation at this point is in order, the case fatality rate refers to \textit{confirmed} cases. In contrast, another popular indicator: the Infection Fatality Rate (IFR), considers the \textit{actual} number of infected cases. Since COVID-19 can give rise to infections with mild or no symptoms, the number of actual cases might be significantly higher than the confirmed cases. For these reasons, it has been debated that the CFR overestimates the probability of death. However, the actual number of cases is unavailable, and the studies considering the IFR relied on estimations. In this direction, it is worth mentioning the work of Ghisolfi et al. \cite{IRF_age_sex_etc} in which values for the IFR have been extrapolated accounting for age, gender, comorbidities, and health system capacity by discriminating between low-income and high-income regions. Considering the controversy around the CFR and IFR indicators, we decided to employ the CFR values to obtain an estimate for the mortality probability.

Moreover, for the characterization of our synthetic population, we have not considered other comorbidities (i.e., seropositive status, pre-existing medical conditions) nor gender differences (see \cite{IRF_age_sex_etc} for such differentiated values). Again, the scope of this work is not to faithfully describe the population but to capture some quantitative differences in population classes and retain the significant traits of a heterogeneous population. Many studies confirm the strong correlation between age and CFR (or IFR) \cite{CFR2020}\cite{IRF_age_sex_etc}\cite{AustraliaCFR}. The latter study considers data from 25th January to 10th December 2020 for the State of Victoria, Australia. In this period, the state experienced two waves of the virus, and by the end of the data series, the infection was eradicated from the Victorian population \cite{AustraliaCFR}. This observation made the writers conclude that their estimate of the CFR is not spoiled by the underestimation bias, which is customary for an ongoing outbreak. We report the observed CFR data in Table II. %\ref{tab:CFR_australia}.
Even though the values for the CFR vary slightly among the cited studies, the trend is clear: older individuals are more \textit{fragile}, and the CFR drastically increases with age.

\begin{figure}[h!] % Use figure* to span both columns
	\centering
	\begin{minipage}[c]{0.45\linewidth} % Adjust the width as needed
		\centering
		\begin{tabular}[t]{cr}
			\hline
			\hline
			Age Class & CFR \\
			\hline
			0-19 & 0.1\\
			20-29 & 0.1\\
			30-39 & 0.3\\
			40-49 & 0.9\\
			50-59 & 2.8\\
			60-69 & 10.9\\
			70-79 & 26.7\\
			$\geq$ 80 & 34.6\\
			\hline
			\hline
		\end{tabular}
		\caption*{Table I - Italy}
		%\label{tab:CFR_italy}
	\end{minipage}
	%\hfill % Add some space between the two tables
	\begin{minipage}[c]{0.45\linewidth} % Adjust the width as needed
		\centering
		\begin{tabular}[t]{cr}
			\hline
			\hline
			Age Class & CFR \\
			\hline
			0-9 & 0\\
			10-19 & 0\\
			20-29 & 0.02\\
			30-39 & 0.06\\
			40-49 & 0.04\\
			50-59 & 0.63\\
			60-69 & 2.16\\
			70-79 & 14.41\\
			80-89 & 31.90\\
			$\geq$ 90 & 40.03\\
			\hline
			\hline
		\end{tabular}
		\caption*{Table II - Australia}
		%\label{tab:CFR_australia}
	\end{minipage}
\end{figure}

The data from \cite{CFR2020} have been assumed as representative of the mortality probability for all the countries considered for the $f_{r,p}$ distribution construction. It is a simplification justified by the absence of an extensive and precise study concerning the case fatality rate at a country level. Figure \ref{fig:Italy_p} represents these data on the Case Fatality Rate as a function of age. The linear fitting of the empirical points is also shown in the figure, which allows the expansion of the number of age classes considered in the synthetic population.

\begin{figure}[h]\begin{center}
		\includegraphics[width=0.8\columnwidth]{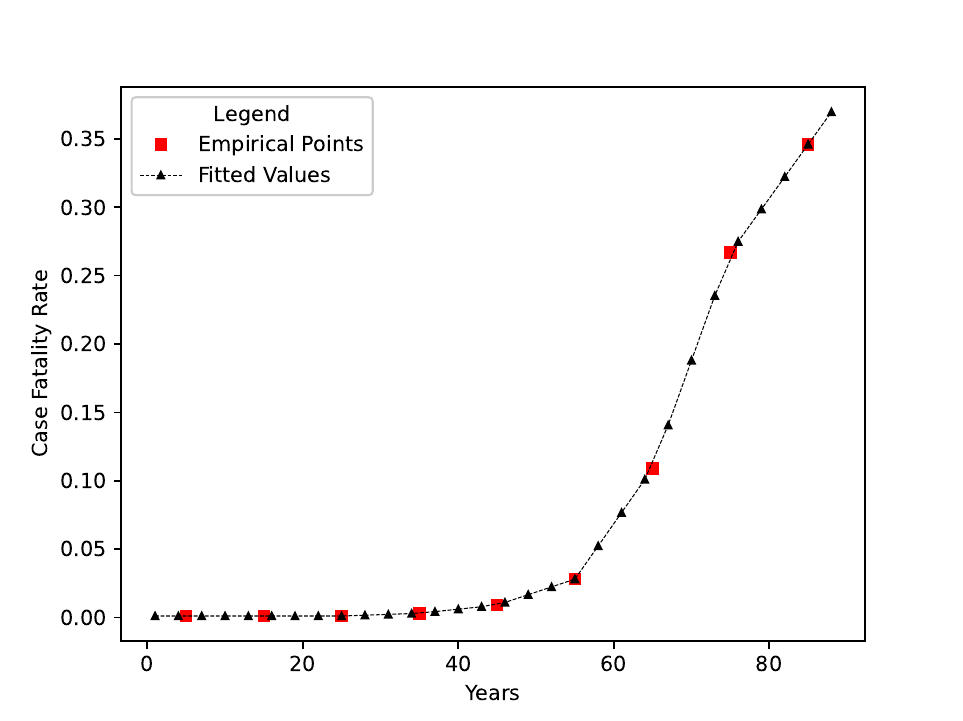}
	\end{center}
	\caption{Values for the Case Fatality Rate for the various age classes from \cite{CFR2020}. The graph shows the empirical points (in red) and the linearly fitted values of CFR in order to expand the number of available age classes for our population. The strong correlation between age and probability of dying is well depicted by this graph.}
	\label{fig:Italy_p}
\end{figure}

\begin{figure}[h]\begin{center}
		\includegraphics[width=0.48\columnwidth]{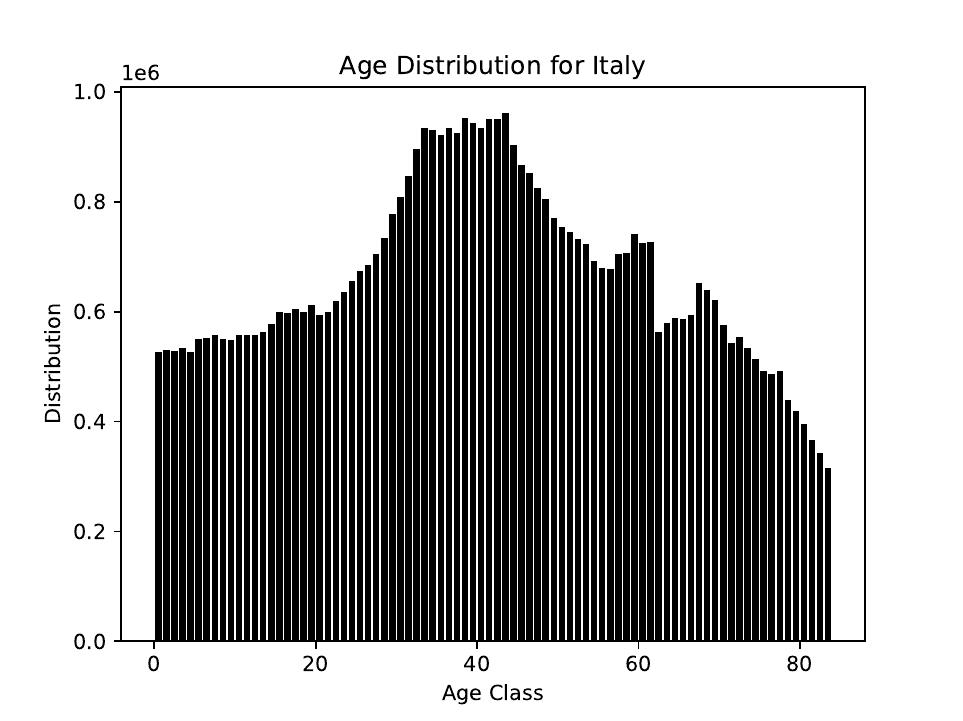}
		\includegraphics[width=0.48\columnwidth]{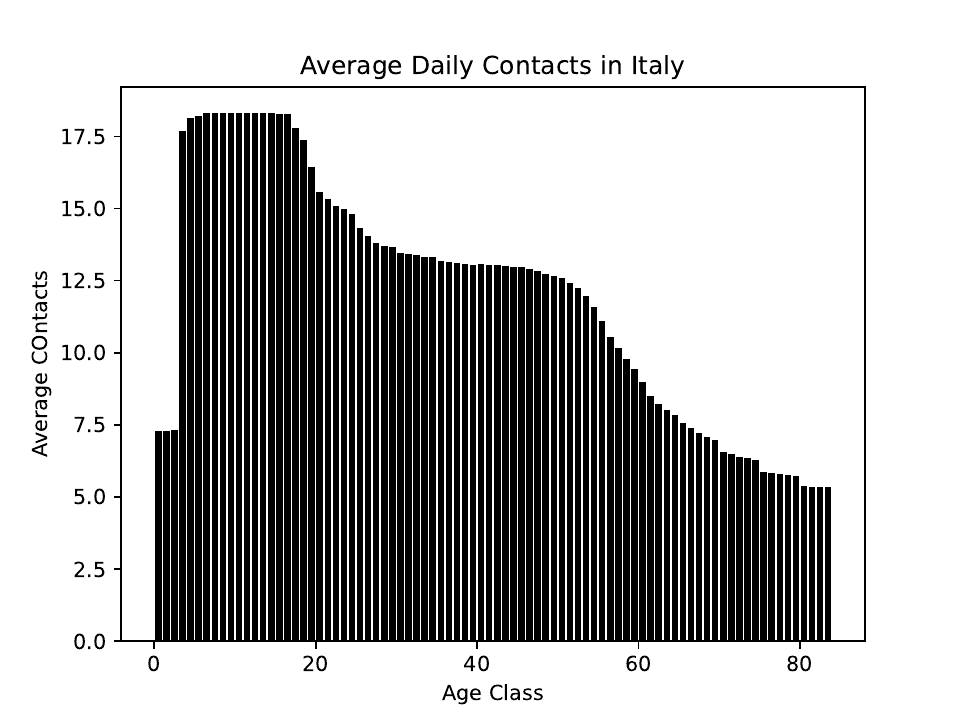}
	\end{center}
	\caption{Characterization of the Italian population in terms of the age distribution and the daily average contacts among individuals as reported in \cite{Vespignani_contacts}.}
	\label{fig:Italy_age_contacts}
\end{figure}

\subsection{Daily Number of Contacts}

\begin{figure*}[h!]
	%\vspace{-0.5cm}
	\centering
	\includegraphics[width=\textwidth]{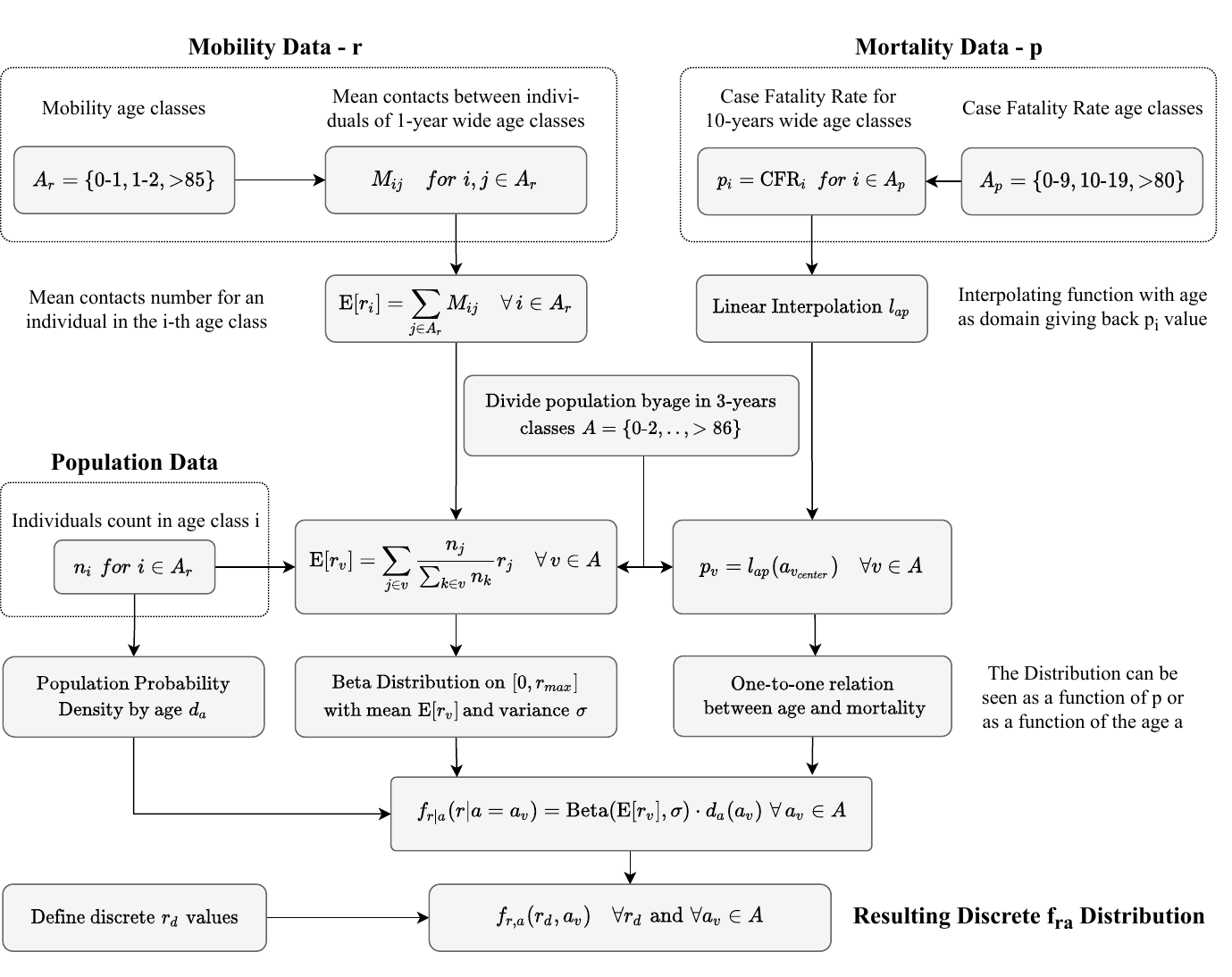}
	\caption{Pictorial representation of the construction's procedure for the synthetic distribution $f_{r,a}$. Starting from empirical data a discrete $f_{r,a}$ distribution is obtained for arbitrary values of the risk exposure $r$ and age classes $a$.}
	\label{fig:f_ra_flowchart}
\end{figure*}

Another crucial information for describing a population in epidemiological terms is the daily number of contacts of an individual. Indeed, an infection can occur when an infected individual encounters a healthy individual. The more individual-to-individual contacts, the higher the chance of an infection. In \cite{Vespignani_contacts}, age-specific contact matrices have been generated from detailed census and survey data on key-demographic features for 35 countries. Four different settings are considered (household, school, workplace, and community), producing as many specific contact matrices. A linear combination with appropriate weights of these matrices produces an overall contact matrix $\{M_{ij}\}$ which indicates the daily per capita number of contacts an individual of age $i$ has with individuals of age j (the interested reader is referred to \cite{Vespignani_contacts} for more details). For this work, the entire contact matrix $M$ is not necessary. We are only interested in the average number of contacts of an individual of age $i$ regardless of the age of the individual encountered. This value is easily computed from $M$ by summing over the columns of the matrix: $\mathbb{E}[r_i] = \sum_j M_{ij}$. In \cite{Vespignani_contacts} for each country considered in the study, the number of people belonging to 1-year-wide age classes is also provided, allowing for the definition of the age distribution $d_a(\text{age})$ of the population. Figure \ref{fig:Italy_age_contacts} reports the age distribution for Italy truncated at 84 years, together with the average daily contacts for each age class.

\begin{figure*}[h!]\begin{center}
		\begin{tabular}{ccc}
			\includegraphics[trim=1.9cm 0.6cm 1.9cm 0.8cm, clip, width=0.31\textwidth]{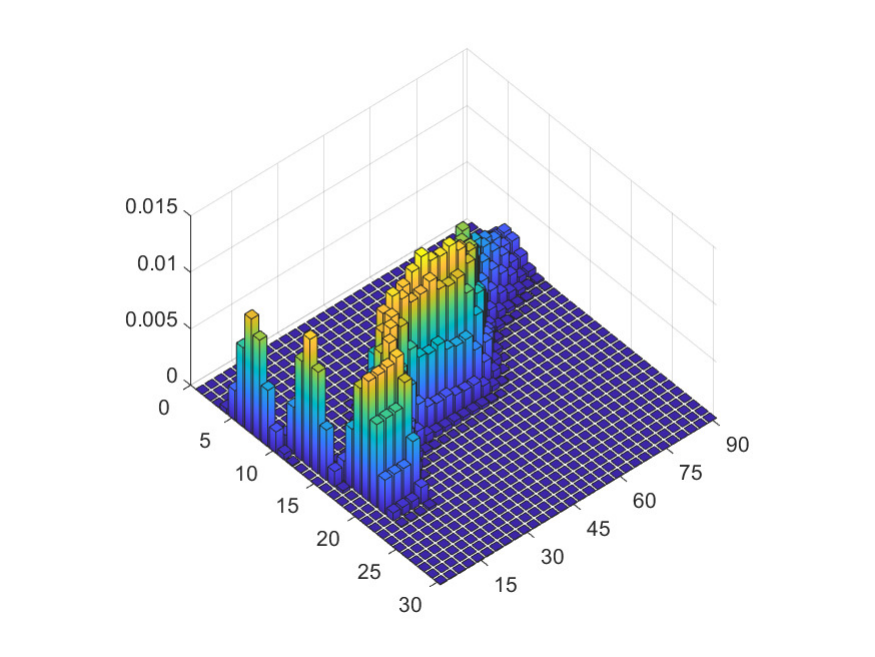}&
			\includegraphics[trim=1.9cm 0.6cm 1.9cm 0.8cm, clip, width=0.31\textwidth]{Australia_hist_3D_v2.pdf}&
			\includegraphics[trim=1.9cm 0.6cm 1.9cm 0.8cm, clip, width=0.31\textwidth]{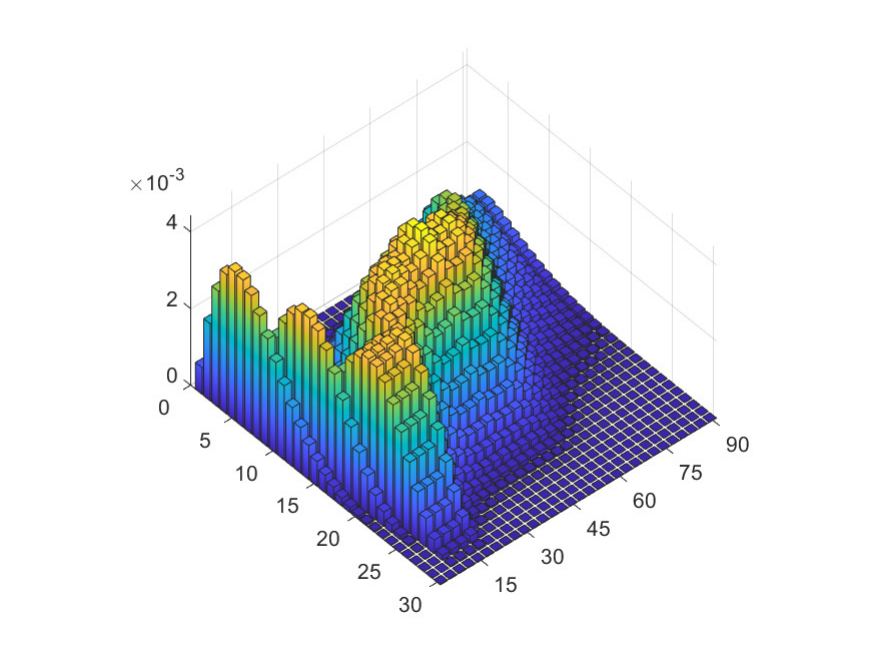}\\
			(a) Australia distrib. with $\textsf{Var}=0.002$ &(b) Australia distrib. with $\textsf{Var}=0.01$ &(c) Australia distrib. with $\textsf{Var}=0.02$
		\end{tabular}\\
		\begin{tabular}{ccc}
			\includegraphics[trim=1.9cm 0.6cm 1.9cm 0.8cm, clip, width=0.31\textwidth]{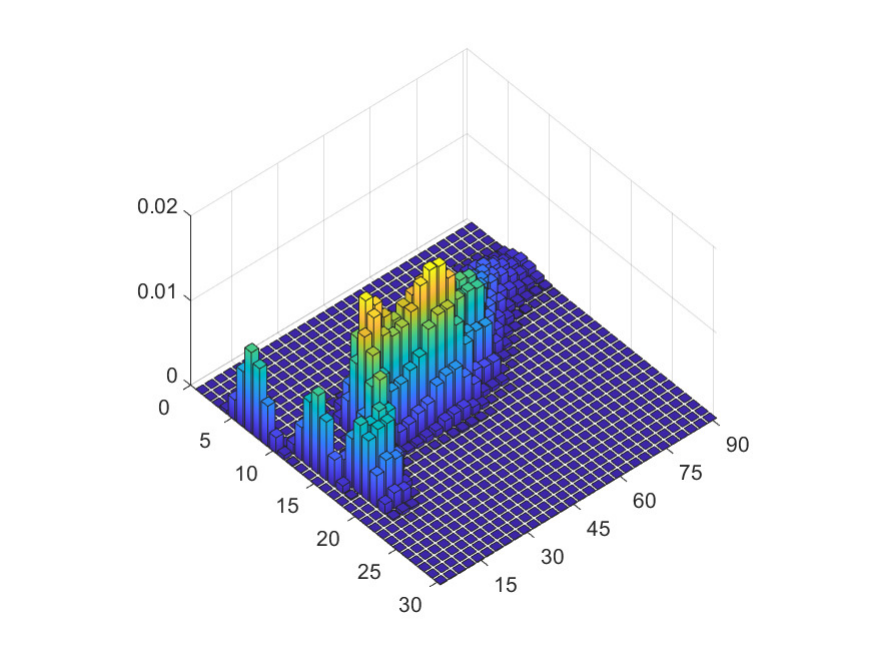}&
			\includegraphics[trim=1.9cm 0.6cm 1.9cm 0.8cm, clip, width=0.31\textwidth]{China_hist_3D_v2.pdf}&
			\includegraphics[trim=1.9cm 0.6cm 1.9cm 0.8cm, clip, width=0.31\textwidth]{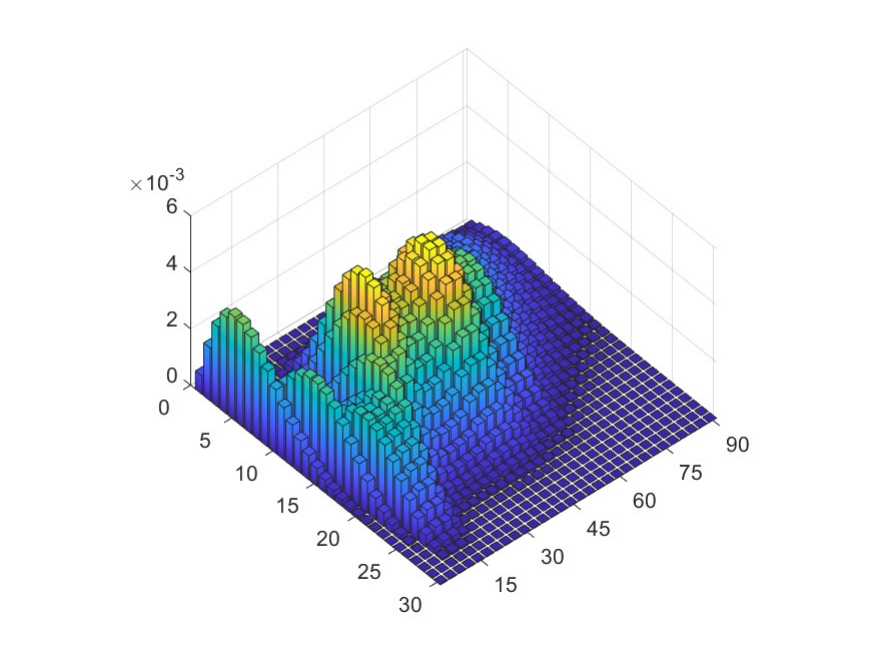}\\
			(d) China distribution with $\textsf{Var}=0.002$ &(e) China distribution with $\textsf{Var}=0.01$ &(f) China distribution with $\textsf{Var}=0.02$
		\end{tabular}\\
		\begin{tabular}{ccc}
			\includegraphics[trim=1.9cm 0.6cm 1.9cm 0.8cm, clip, width=0.31\textwidth]{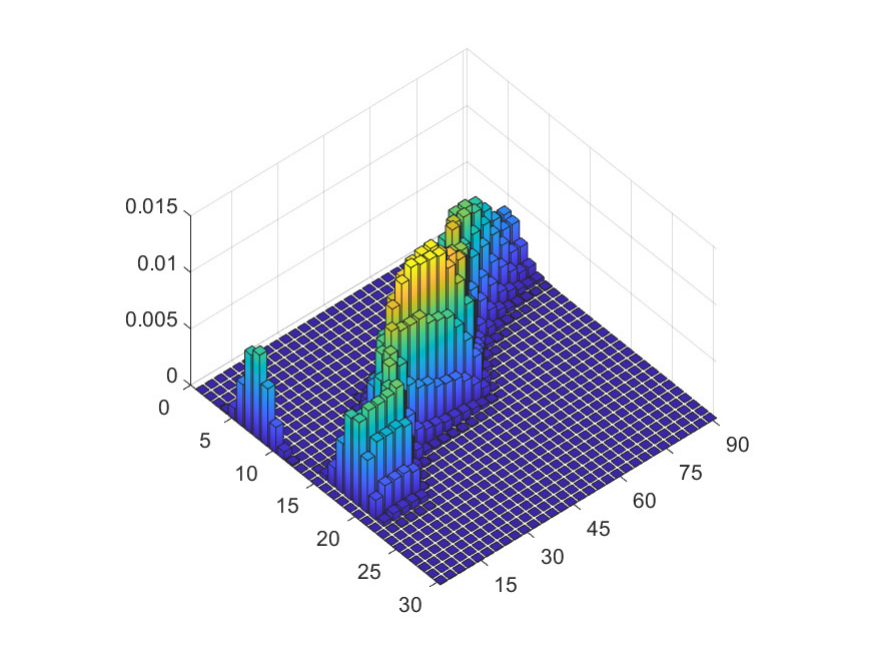}&
			\includegraphics[trim=1.9cm 0.6cm 1.9cm 0.8cm, clip, width=0.31\textwidth]{Italy_hist_3D_v2.pdf}&
			\includegraphics[trim=1.9cm 0.6cm 1.9cm 0.8cm, clip, width=0.31\textwidth]{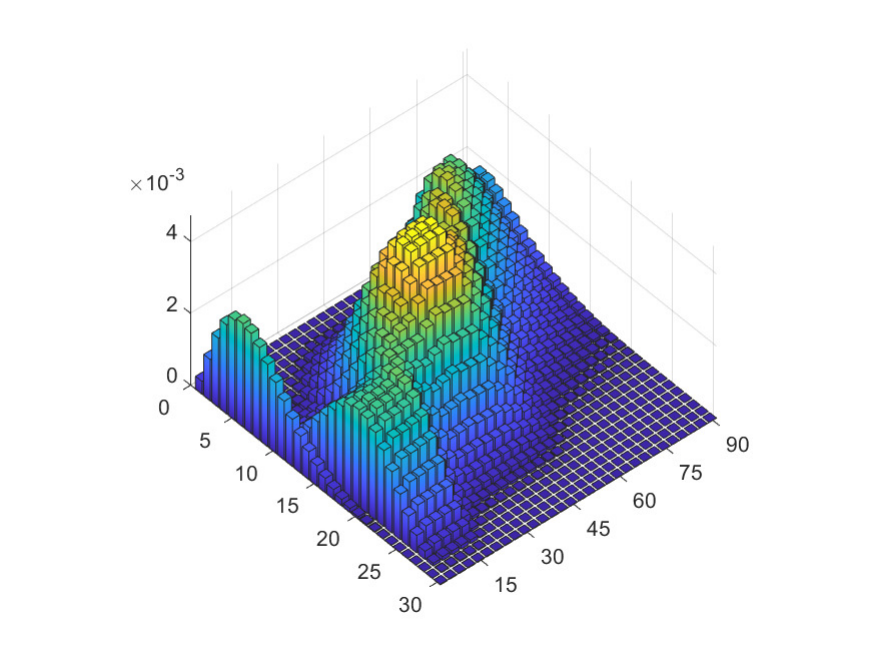}\\
			(g) Italy distribution with $\textsf{Var}=0.002$ &(h) Italy distribution with $\textsf{Var}=0.01$ &(i) Italy distribution with $\textsf{Var}=0.02$
		\end{tabular}
	\end{center}
	\caption{$f_{r,a}$ distributions for the countries that have been considered: Australia, China, Italy. The age bins are $A=\{\mbox{0-2},\mbox{3-5},..,\geq 87 \}$ the daily number of contacts $r \in [0,r_{max}=30]$.} \label{fig:synthetic_populations1}
\end{figure*}

\begin{figure*}[h!]\begin{center}
		\begin{tabular}{ccc}
			\includegraphics[trim=1.9cm 0.6cm 1.9cm 0.8cm, clip, width=0.31\textwidth]{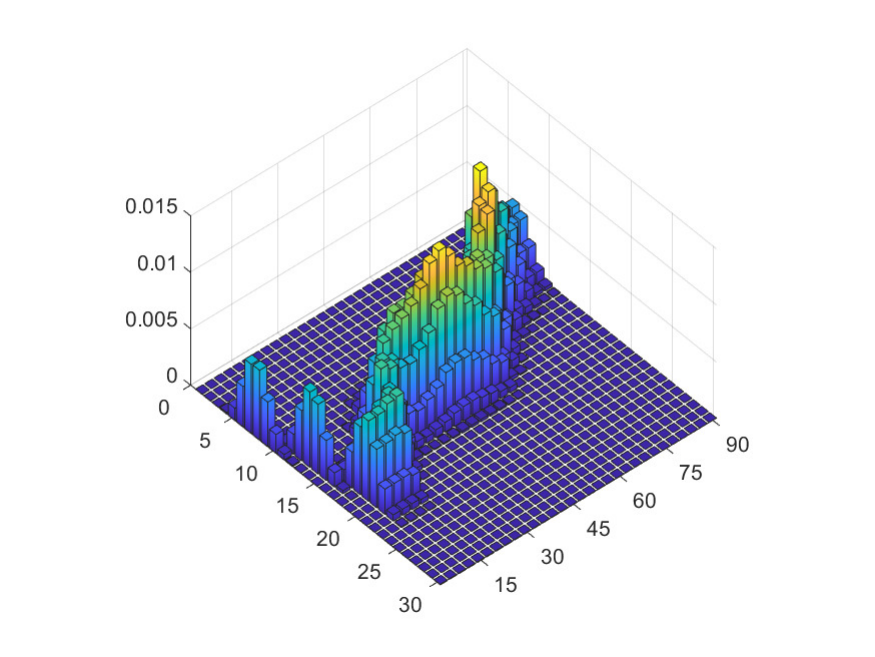}&
			\includegraphics[trim=1.9cm 0.6cm 1.9cm 0.8cm, clip, width=0.31\textwidth]{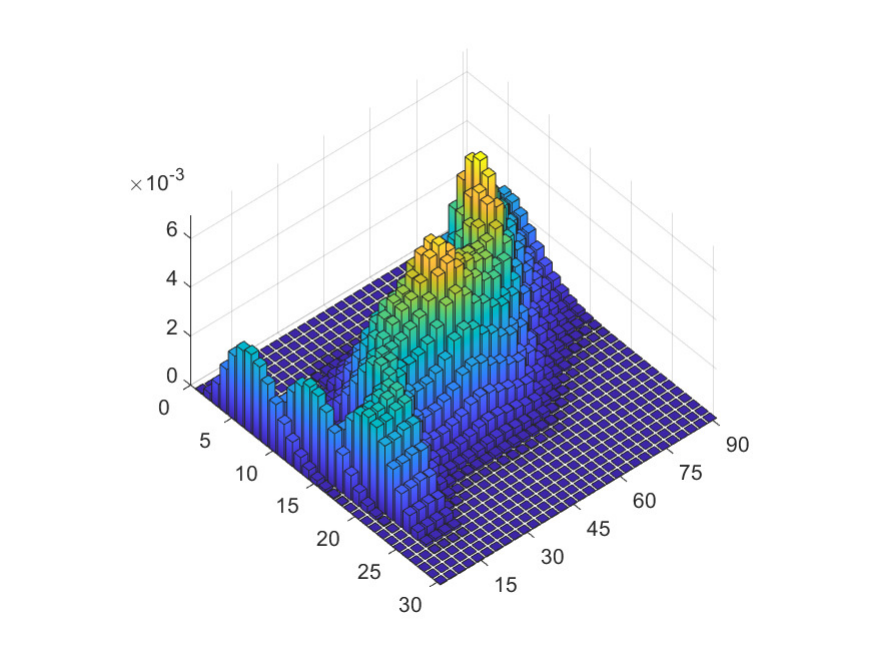}&
			\includegraphics[trim=1.9cm 0.6cm 1.9cm 0.8cm, clip, width=0.31\textwidth]{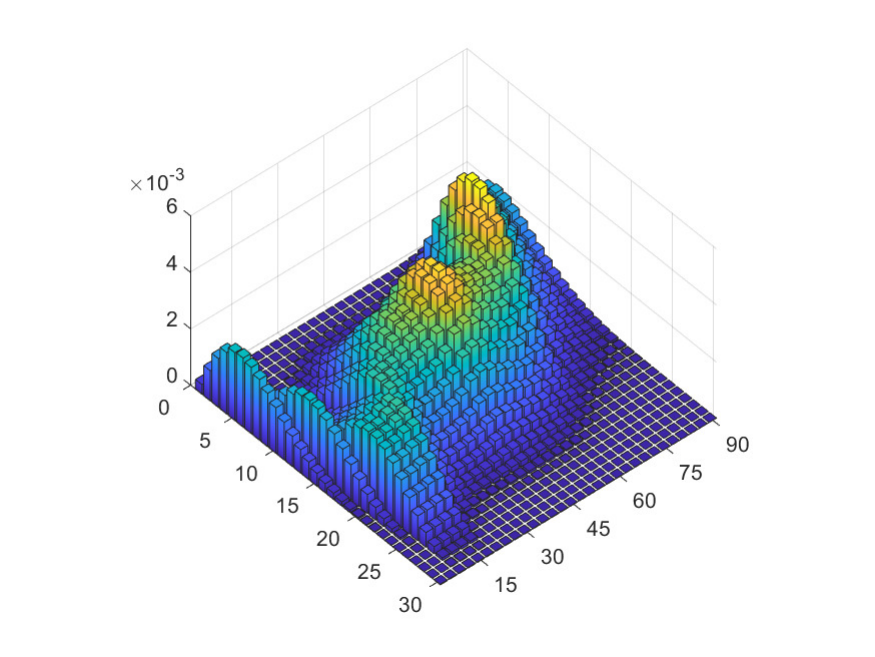}\\
			(a) Japan distribution with $\textsf{Var}=0.002$ &(b) Japan distribution with $\textsf{Var}=0.01$ &(c) Japan distribution with $\textsf{Var}=0.02$
		\end{tabular}
		\begin{tabular}{ccc}
			\includegraphics[trim=1.9cm 0.6cm 1.9cm 0.8cm, clip, width=0.31\textwidth]{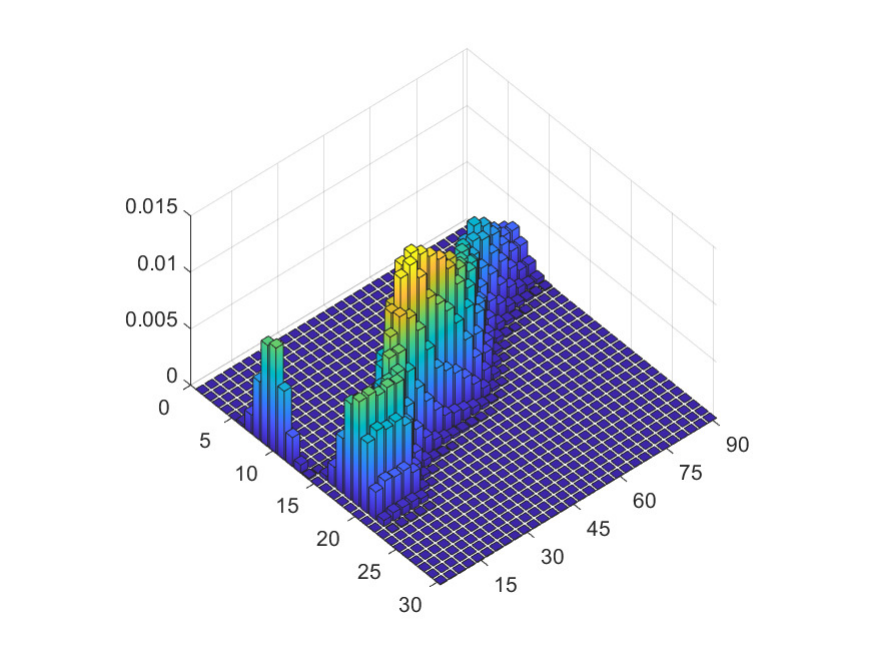}&
			\includegraphics[trim=1.9cm 0.6cm 1.9cm 0.8cm, clip, width=0.31\textwidth]{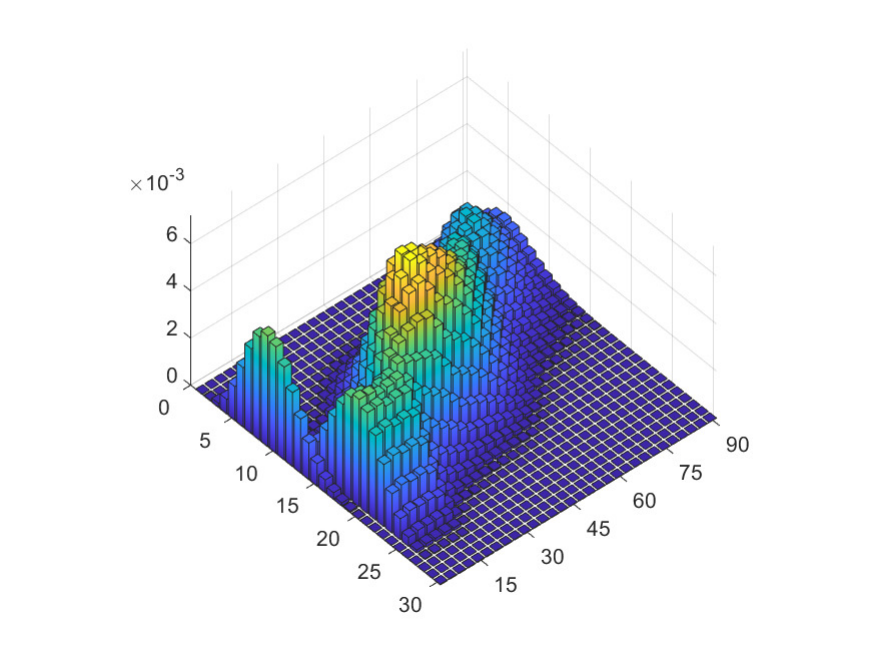}&
			\includegraphics[trim=1.9cm 0.6cm 1.9cm 0.8cm, clip, width=0.31\textwidth]{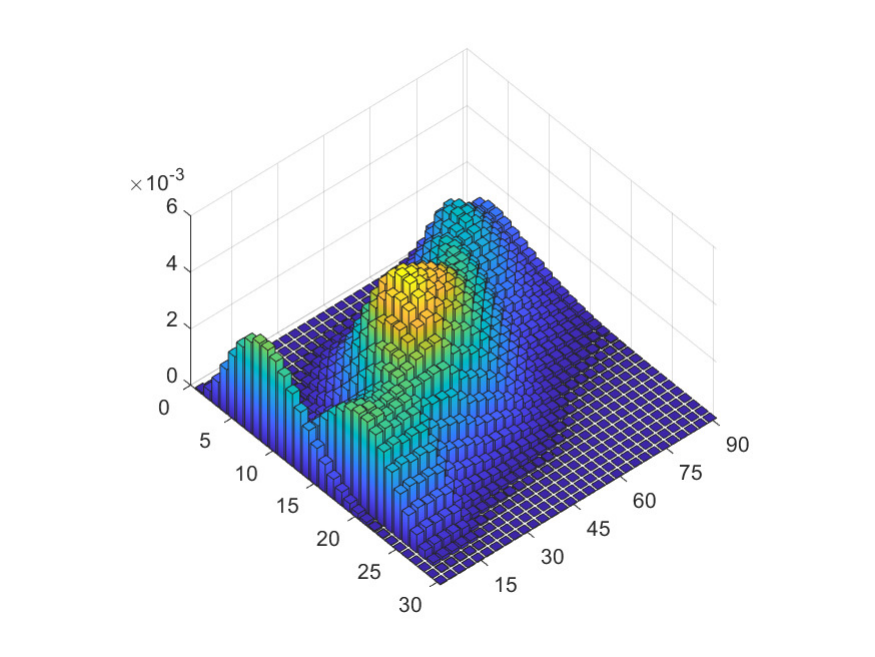}\\
			(d) Spain distribution with $\textsf{Var}=0.002$ &(e) Spain distribution with $\textsf{Var}=0.01$ &(f) Spain distribution with $\textsf{Var}=0.02$
		\end{tabular}
		\begin{tabular}{ccc}	
			\includegraphics[trim=1.9cm 0.6cm 1.9cm 0.8cm, clip, width=0.31\textwidth]{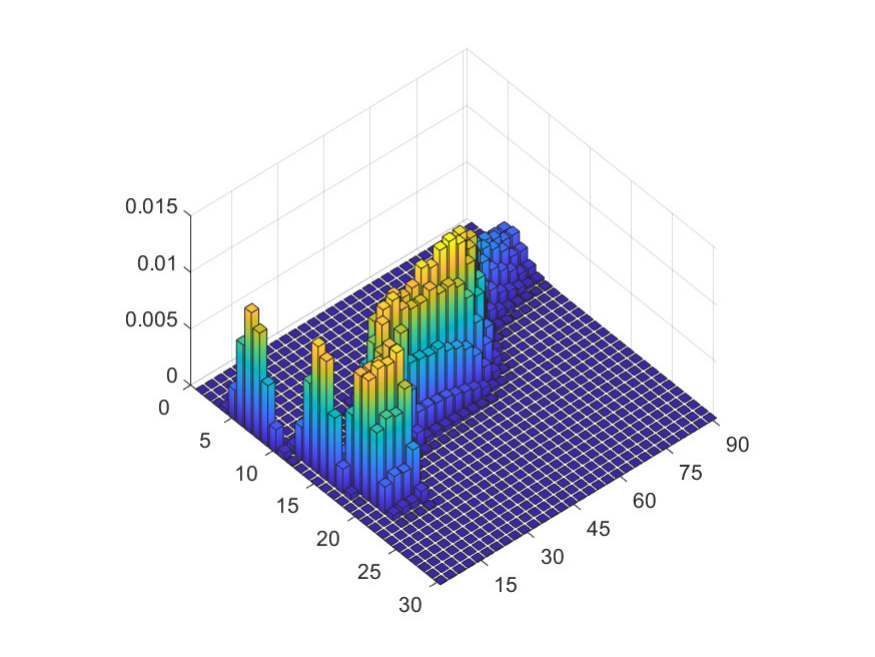}&
			\includegraphics[trim=1.9cm 0.6cm 1.9cm 0.8cm, clip, width=0.31\textwidth]{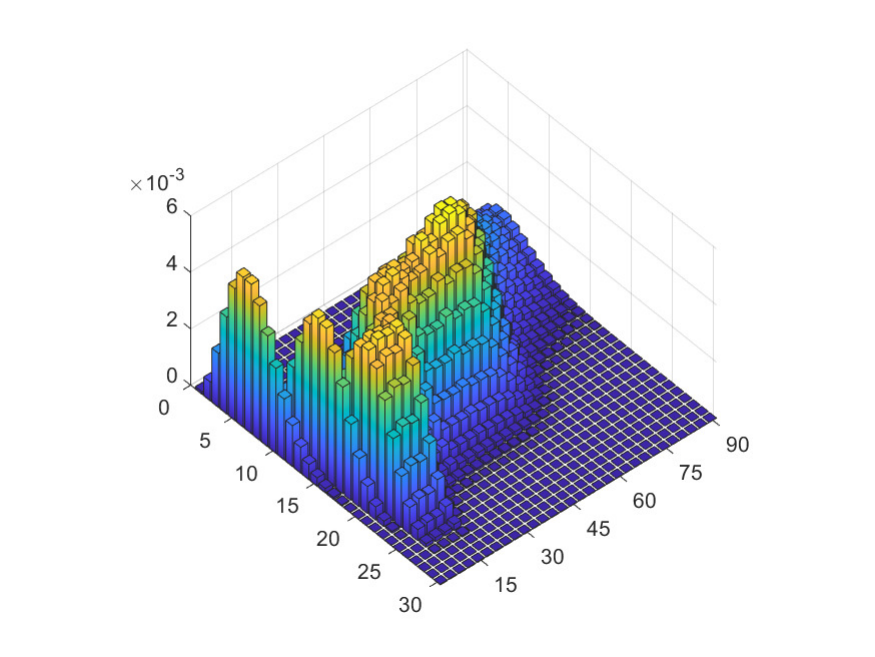}&
			\includegraphics[trim=1.9cm 0.6cm 1.9cm 0.8cm, clip, width=0.31\textwidth]{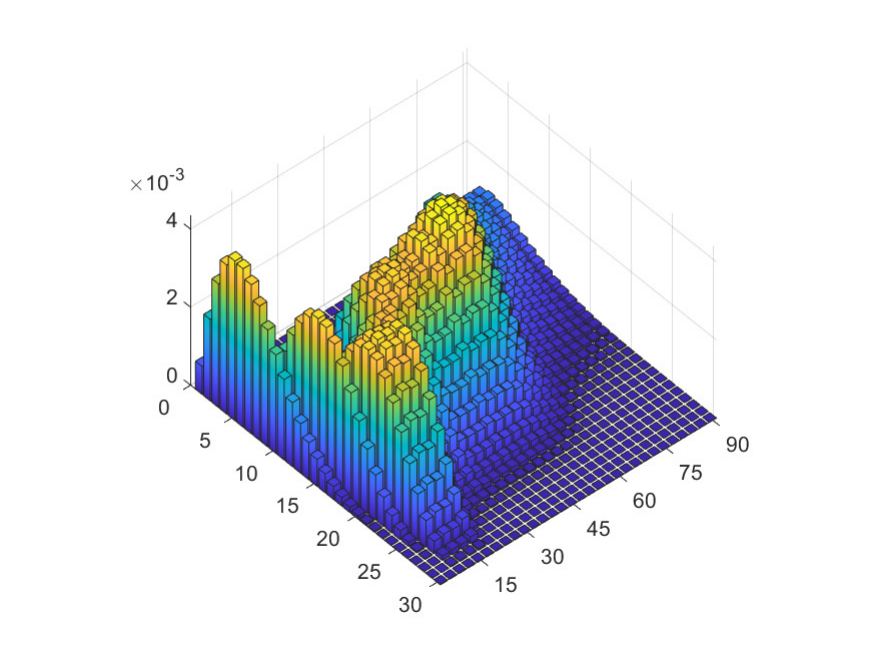}	\\
			(g) USA distribution with $\textsf{Var}=0.002$ &(h) USA distribution with $\textsf{Var}=0.01$ &(i) USA distribution with $\textsf{Var}=0.02$
	\end{tabular}    \end{center}
	\caption{$f_{r,a}$ distributions for the countries that have been considered: Japan, Spain and the United States. The age bins are $A=\{\mbox{0-2},\mbox{3-5},..,\geq 87 \}$ the daily number of contacts $r \in [0,r_{max}=30]$.} \label{fig:synthetic_populations2}
\end{figure*}

\subsection{Construction of the Distribution}\label{sec:frp}
The quantities reported in the previous paragraphs are the essential ingredients for constructing the $f_{r,p}$ distribution function. Indeed, we start from the knowledge of the age distribution $d_a$, the average daily contacts $\mathbb{E}[r_a]$, and the case fatality rate $p_a$, both given for certain age classes $a$. The flowchart reported in Figure \ref{fig:f_ra_flowchart} exemplifies the procedure followed to construct the $f_{r,p}$ distribution, highlighting at what point we used which data.
First, we partitioned the population into uniform age classes, each with a width equal to three years $A=\{0-2,..,\geq 87\}$. It must be observed that the raw data refer to age bins of different sizes: one-year width for the average contacts as well as the population's composition and ten-years width for the case fatality rate. They must be manipulated to refer to the same age classes $v \in A$. Thus, we averaged the mean daily contacts, weighting the average value by the relative frequency of the sub-bin of the wider class $v$. Starting from $\mathbb{E}[r_i], \,\, \text{for} \, i \in A_r=\{0-1,1-2,..,>85\}$ we obtained:
\begin{equation}
	\mathbb{E}[r_v]=\sum_{j \in v}\frac{n_j}{\sum_{k \in v} n_k} r_j \quad \forall v \in A=\{0-2,..,\geq 87\}
\end{equation}
where $n_j$ indicates the number of individuals in the age class $j \in A_r$. Similarly, we have to adapt the CFR defined for ten-year wide classes $A_p=\{0-9,..,>80\}$ to the new three-year bins. For this purpose, the empirical values of the CFR have been linearly interpolated. This allows finding a CFR ($p$) value for each $v \in A$.

Given that we do not consider individual traits such as gender, ethnicity, or pre-existing medical conditions, we consider the death probability $p$ as a fixed value, given the age class $v \in A$. This value has some variability because of the aspects just discussed and individuals' intrinsic differences. Nevertheless, the mortality probability $p$ does not directly affect the system's dynamical behavior; it just determines the number of individuals entering the "death" $D$ state, so its contribution matters on average. Note that in our setting, it is equivalent to talking about $f_{r,p}$ and $f_{r,a}$ due to the one-to-one relationship between an age class $a$ and the corresponding probability of death $p_a$.
On the other hand, the number of daily contacts $r$ enters directly into the epidemiological system's dynamical rule (see Equations \ref{eq:dyn_sys_sm}. Assuming $r$ constant over a specific age interval would be a too substantial simplification. It is known that individuals of the same age have very different mobility patterns due to, for instance, their occupation, their lifestyle, and their medical condition. To account for this variability, we spread the individual's daily contacts around their mean value according to a Beta distribution with a given variance $\sigma$. We choose the support of the Beta distribution to be $[0,r_{max}]$. It is clear that $r \geq 0$, at the same time, we decided to fix an upper bound for the number of contacts assuming that just a negligible number of individuals would fall outside this interval. The average value of contacts $\mathbb{E}[r_v]$ fixes one of the Beta distribution parameters. The other parameter characterizing the Beta distribution appears to be a free variable for which we selected different values to obtain sufficiently different variance values. The resulting distributions have been fed as input to the dynamical model giving qualitatively the same results. Since the desired $f_{r, a}$ distribution needs to be discrete. We defined a set of equispaced $r$ values for which we computed the discrete probability by integrating the Beta distribution in the interval of interest. This procedure allows obtaining a discrete bi-dimensional distribution for 3-year-wide age bins (each corresponding to a certain mortality probability $p$) and equispaced values of $r$ in the interval of definition $[0,r_{max}]$.

\subsection{Estimation of Years of Life Lost}

One of the metrics employed to compare the proposed vaccination strategies consists of evaluating the years of life lost (YLL) due to deaths attributable to the virus. It is a measure of premature mortality that not only considers the chance of death from the virus but also the age of the deceased individuals. In order to perform such an estimation, country-specific demographic data are needed for all countries considered to construct the synthetic populations. The World Health Organization (WHO) collects health-related statistics in the Global Health Observatory \cite{WHO_dataset}, gathering information regarding all 194 WHO member states. In particular, it provides the life expectation stratified by age $\mathbb{E}[L_a]$, pivotal to compute the years of life lost. $\mathbb{E}[L_a]$ is provided for 5-year-wide age bins except for the first bin, which is partitioned into the individuals younger and older than one, and the last one, which groups all people older than 85. We made an interpolation to obtain a $\mathbb{E}[L_a]$ value for each of the age classes of our synthetic populations. We linearly interpolated the empirical data of the Global Health Observatory, providing the life expectation for each three-year-wide age class. To present some of the differences that might be present in $\mathbb{E}[L_a]$ considering different countries, we report the WHO's life expectation values for Italy (left) and China (right) in Table~III. %(\ref{tab:life_exp}).

\begin{table}[h]
	\begin{center} 
		\begin{tabular}[t]{cc}
			%\centering
			\hline
			\hline
			Age Class & Life Expectation\\
			%& [years] \\
			\hline
			$<$1 & 82.97\\
			1-4 & 82.20\\
			5-9 & 78.24\\
			10-14 & 73.27\\
			15-19 & 68.30\\
			20-24 & 63.36\\
			25-29 & 58.44\\
			30-34 & 53.52\\
			35-39 & 48.61\\
			40-44 & 43.73\\
			45-49 & 38.90\\
			50-54 & 34.15\\
			55-59 & 29.53\\
			60-64 & 25.04\\
			65-69 & 20.73\\
			70-74 & 16.63\\
			75-79 & 12.80\\
			80-84 & 9.24\\
			$\geq$ 85 & 6.21\\
			\hline
			\hline
		\end{tabular}
		\quad\quad%\quad\quad\quad
		\begin{tabular}[t]{cc}
			%\centering
			\hline
			\hline
			Age Class & Life Expectation\\
			%& [years] \\
			\hline
			$<$1 & 77.43\\
			1-4 & 76.96\\
			5-9 & 73.04\\
			10-14 & 68.11\\
			15-19 & 63.18\\
			20-24 & 58.28\\
			25-29 & 53.44\\
			30-34 & 48.60\\
			35-39 & 43.79\\
			40-44 & 39.05\\
			45-49 & 34.39\\
			50-54 & 29.79\\
			55-59 & 25.33\\
			60-64 & 21.06\\
			65-69 & 17.01\\
			70-74 & 13.24\\
			75-79 & 9.95\\
			80-84 & 7.06\\
			$\geq$ 85 & 4.76\\
			\hline
			\hline
		\end{tabular}
	\end{center}
	\caption*{Table III: Life expectation - Italy (left) and China (right).}
	\label{tab:life_exp}
\end{table}

{\color{black}
	\section{The parameters of the reference scenario}\label{sm:param}
	
	\subsection{Table of parameters - The COVID-19 case}
	
	Here we report a table with all the (default) values used in our simulation, if not otherwise stated in the text. In the following subsection, we motivate the choice of the values for these parameters, which are roughly set to represent the COVID-19 epidemic in Italy (or any other comparable population in terms of size and contact patterns).
	
	\begin{center}
		\begin{table}[htb]\centering
			%\caption{Parameters of reference scenerio}\label{tab:param}
			\caption*{Table IV: Parameters of reference scenerio}\label{tab:param}
			\boxed{\scalebox{0.9}{%
					\begin{tabular}{l|l|l}
						symbol & value & description \\ \hline
						$N$ \rule{0pt}{3ex} & 60 million & total population size \\
						$N^{\textrm{novax}}$ & 6 million & no-vax population size \\
						$t_{\max}$ & 3 years & time-horizon \\
						$t_V$ & 1 year & time at which vaccinations start \\
						$t_2$ & 2 years & time at which strain 2 appears \\
						$R_0^1$ & 6 & basic reproduction number of strain 1 \\
						$R_0^2$ & 12 & basic reproduction number of strain 2 \\
						$1/\gamma$ & 8 days & average sojourn time in state $I$ \\
						$1/\phi$ & 16 days & average sojourn time in state $H$ \\
						$1/\tau$ & 16 days & average sojourn time in state $T$ \\
						% $1/\mu$ & 180 days & average sojourn time in state $M$ \\
						% $1/\nu$ & 180 days & average sojourn time in state $V$ \\
						$\mathsf{VE}^{2}_1$ & 0.9 & vaccine efficacy against strain 1 \\
						$\mathsf{VE}^{2}_2$ & 0.7 & vaccine efficacy against strain 2 \\
						$q_{21}$ & 5 & mortality reduction of strain 2 vs strain 1 \\
						$q_{\texttt{post}}$ & 20 & mortality reduction after exposure \\
						&& to virus/vaccine \\
						$\rho_{\max}$ & 15 & maximum transmissibility reduction \\
						$T_{\max}$ & 20,000 & ICU control parameter \\
						$H_{\max}$ & 40,000 & Hospitalizations control parameter \\
						$\widehat T$ & 20,000 & ICU capacity \\
						$\widehat H$ & 50,000 & Hospitalizations capacity \\
						
						$\theta$ & 10 & mortality increase due to ICU saturation  \\
						$\alpha$ & 2 & exponent of economic cost
			\end{tabular}}}
		\end{table}
	\end{center}

	\subsection{Motivation of the choice of the parameters}
	
	The parameters specified in Table IV refer to the specific behavior of the COVID-19 pandemic. This section discusses the choice of such parameters, providing the necessary references that justify them.
	
	In our reference scenario, we considered two phases of the pandemic, which is, of course, a first simplification. In the first phase of our scenario, there was more than one variant on the national territory. Indeed, there were several different variants with slightly different basic reproduction numbers. For simplicity, since our work does not aim to represent the pandemic's evolution accurately but rather to provide a data-driven demonstration of the proposed model, we considered only one reproduction number for the period, roughly equivalent to that of the Delta variant but also compatible with earlier variants. The Delta variant was first identified in India in October 2020 and has a basic reproduction number that ranges from 3.2 to 8 \cite{Liu2021}. Indeed, the value chosen for strain 1 is not far from the values of the basic reproduction number in the early outbreak, which averaged 4.22 when considering several European countries, and was estimated to be 6.33 in Germany and 5.88 in the Netherlands, while in Italy it was 4.25 \cite{Linka2020}. The value of 6 chosen for strain 1 is roughly between the values for the early outbreak and the later Delta variant. For strain 2, “The Omicron variant has an average basic reproduction number of 9.5 and a range from 5.5 to 24” \cite{Linka2020}. These values motivate us to choose a basic reproduction number of 12 for the second strain in our reference scenario since its occurrence corresponds approximately to the onset of the Omicron variant in Italy.
	
	Regarding the average length of stay in the different states, it is reported in \cite{Shryane2021} that the time spent in the ICU was 18.4 days (before 25th March) and 15.4 days (after 7th April), which is why we chose 16 days. This choice is also consistent with the study in \cite{Rees2020}, which examined several studies and found that the median length of total hospital stay (Length of Stay) ranged from 5 to 29 days. “Most studies (43/52) reported LoS for total hospitalization only, with four studies reporting LoS for ICU only, and five studies reporting both.” \cite{Rees2020}. Regarding the length of stay in the ICU in this study, the median ranged from 5 to 19 days \cite{Rees2020}. We used an average sojourn time of 16 days for both hospitalizations and ICU. Table 4 of \cite{Byrne2020} reports the infectious period (IP) for symptomatic cases from several studies, ranging from 3 to 20 days. The infectious period seems to be about a week. We set it at eight days in our scenario.
	
	On the website of the Italian Government \cite{governoitaliano}, a report regarding vaccinations is available, as for 17th July 2023, 90,25\% of the population over 12 has completed the vaccinal cycle, justifying the 10\% of no-vax individuals considered in our scenario.
	
	To what concerns the mortality reduction of strain 2 compared to strain 1 and the mortality reduction associated with acquired/natural immunity: In \cite{Liu2022}, it is found that in South Africa, the infection fatality ratio was reduced by 78.7\%, which approximately corresponds to a factor 5 decrease, as used in our scenario. For the reduction in mortality, we assumed the effect of vaccination and natural immunity are comparable. In \cite{Rahmani2022} is reported that the COVID-19-related mortality of the Pooled Vaccine Effectiveness (PVE) was 92\%, corresponding to a hazard ratio of 0.08, which we optimistically associated with a factor of reduction of 20.
	
	As for the number of “regular” hospital and intensive care beds, we rely on the ISTAT (Italian Institute of Statistics) report \cite{ISTAT2020}. The number of “regular” beds is not easy to determine since there are different types of hospital beds, depending on the expected length of stay and the medical service to be provided. This number has declined over the last 30 years and is now about 200000 units. We took a conservative approach in our analysis and assumed $\hat{H}=50000$ for two reasons: First, the above number concerns all types of hospital beds; second, not all beds can be reserved for COVID-19 patients; we considered a quarter of the total number a reasonable choice. Regarding ICU, the report mentioned earlier \cite{ISTAT2020} states that the number of ICU beds is, on average, 15.1 for every 100000 individuals, which is about 10000 beds. Note that while we have set $\hat{T}$ to 20000, we have considered $T_{max}=10000$ in most of our simulations. We argue that it was also interesting to investigate scenarios with a larger capacity.
	
}

\section{Impact of population heterogeneity in an uncontrolled scenario}
%\subsection{Synthetic Distributions for Various Countries}

To test the results of our dynamical model, we constructed different distributions trying to take sufficiently different countries in terms of the average age of the population, culture, geographical location, mobility patterns, and the healthcare system. Intending to do this, we considered Australia, China, Italy, Japan, Spain, and the United States. Figures \ref{fig:synthetic_populations1}-\ref{fig:synthetic_populations2}  above report the $f_{r,a}$ distributions obtained from the data of Italy, Japan, and the United States. As discussed in the previous paragraph, when introducing the Beta distribution for the number of contacts $r$, we introduced a free parameter (the variance). To perform a sensitivity analysis, we defined the country-specific distribution for three different choices of this variance parameter.

In this experiment, we are interested in assessing the impact of population distribution on the evolution of epidemics. 
We consider a large population of size $N=59.55\cdot10^6$, and we run simulations with different population distributions $f_{r,p}$ retrieved from empirical data as described in the previous section.
We set the initial conditions $$I_{r,p}(0)=\frac{\lambda_0}{\gamma}\frac{rf_{r,p}}{\sum_{r,p}rf_{r,p}}, \qquad S_{r,p}(0)=Nf_{r,p}-I_{r,p}(0)$$ with initial infections $\lambda_0=10000$, and $M_{r,p}(0)=D_{r,p}(0)=0$. We consider the scenario with $I_{\max}=700000$, the mortality increasing factor $\theta=3$, the recovery rate $\gamma=1/14$, and different values of $\widetilde{\mathcal{R}}_0^{\text{e}}\in\{3,9,12\}.$

It is worth noticing that, due to differences in the population related to risk exposure and mortality, the thresholds to reach herd immunity are different, becoming country-specific. We refer to herd immunity thresholds as the percentage of people who need to be immune against the disease to achieve herd immunity.
The values are reported in Table V.%\ref{tab:herd_imm_countries}.
\begin{table}
	\begin{minipage}[c]{\linewidth} % Adjust the width as needed
		\centering
		\begin{tabular}[t]{lccc}
			\hline
			\hline
			Country & $\widetilde{\mathcal{R}}_0^{\text{e}}=3$ & $\widetilde{\mathcal{R}}_0^{\text{e}}=9$ & $\widetilde{\mathcal{R}}_0^{\text{e}}=12$\\
			\hline
			Australia & 59.5964$\%$ & 85.9962$\%$ & 87.1337$\%$\\
			China & 62.7654$\%$ & 88.0774$\%$ & 89.1635	$\%$\\
			Italy & 59.5964$\%$ & 84.4589$\%$ & 85.6286$\%$\\
			Japan & 61.3880$\%$ & 86.5939$\%$ & 87.7222$\%$\\
			Spain & 61.0961$\%$ & 86.1429$\%$ & 87.2721$\%$\\
			USA & 60.9627$\%$ & 86.0664$\%$ & 87.2037$\%$\\
			\hline
			\hline
		\end{tabular}
		\caption*{Table V - Immunity thresholds}
		%\label{tab:herd_imm_countries}
	\end{minipage}
\end{table}

From these results, we can observe that China exhibits the highest percentage of people who need to be immune against the disease to extinguish the virus's circulation, followed by Japan, Spain, the USA, and Australia. At the same time, Italy is the country with the lowest thresholds.
If we look at the number of deaths in an uncontrolled regime, based on the heterogeneity in the population distribution, we can see the smallest values for China, followed by Australia and USA, then Spain, Italy, and Japan. Despite the highest herd immunity thresholds in China, the population is relatively younger compared to other countries, making Cina the country with the lowest percentage of deaths.
The order of final deaths can be related to the population's median age (see Table VI). %\ref{tab:correlation_deaths_median_age}).
\begin{table}[h!]
	\begin{minipage}[c]{\linewidth} % Adjust the width as needed
		\centering
		\begin{tabular}[t]{lcccc}
			\hline
			\hline
			Country & Median age (years) & $\widetilde{\mathcal{R}}_0^{\text{e}}=3$ & $\widetilde{\mathcal{R}}_0^{\text{e}}=9$ & $\widetilde{\mathcal{R}}_0^{\text{e}}=12$\\
			\hline
			Australia & 37.9 & 12.3067$\%$ & 12.0818$\%$ & 12.2591$\%$\\
			China     & 38.4 &  7.5232$\%$ &  9.1322$\%$ &  9.1814$\%$\\
			Italy 	  & 47.3 & 12.3067$\%$ & 16.8368$\%$ & 17.1428$\%$\\
			Japan     & 48.4 & 17.2595$\%$ & 21.4392$\%$ & 21.6290$\%$\\
			Spain     & 44.9 & 10.2865$\%$ & 13.7109$\%$ & 13.9261$\%$\\
			USA       & 38.1 &  9.3247$\%$ & 12.2321$\%$ & 12.4003$\%$\\
			\hline
			\hline
		\end{tabular}
		\caption*{Table VI - Positive correlation between deaths and median age}
		%\label{tab:herd_imm_countries}
	\end{minipage}
\end{table}

Finally, Table VII %\ref{tab:YLL}
reports the PYLL and YLL ($\%$ on the PYLL) for each country.
If we sort countries by PYLL indicator, we see that younger populations, therefore with higher PYLL, such as China, Australia, and USA, if hit by the pandemic rampantly, could have in principle a greater loss in terms of YLL. 
What we see from the results, instead, is the lowest percentage of YLL for China, followed by the USA, Spain, Australia, Italy, and Japan.

\begin{table}[h!]
	\begin{minipage}[c]{\linewidth} % Adjust the width as needed
		\centering
		\begin{tabular}[t]{lcccc}
			\hline
			\hline
			Country & PYLL (years) & $\widetilde{\mathcal{R}}_0^{\text{e}}=3$ & $\widetilde{\mathcal{R}}_0^{\text{e}}=9$ & $\widetilde{\mathcal{R}}_0^{\text{e}}=12$\\
			\hline
			Australia & 2.7411$\cdot10^9$ & 4.1193$\%$ & 4.2774$\%$ & 4.3147$\%$\\
			China     & 2.8003$\cdot10^9$ & 3.0344$\%$ & 3.5353$\%$ & 3.5459$\%$\\
			Italy 	  & 2.4898$\cdot10^9$ & 4.5350$\%$ & 5.8434$\%$ & 5.9145$\%$\\
			Japan     & 2.3062$\cdot10^9$ & 6.4868$\%$ & 7.6870$\%$ & 7.7285$\%$\\
			Spain     & 2.6742$\cdot10^9$ & 3.6432$\%$ & 4.5695$\%$ & 4.6159$\%$\\
			USA       & 2.6742$\cdot10^9$ & 3.4649$\%$ & 4.2741$\%$ & 4.3091$\%$\\
			\hline
			\hline
		\end{tabular}
		\caption*{Table VII - Total YLL ($\%$ of PYLL)}
		%\label{tab:herd_imm_countries}
	\end{minipage}
\end{table}
\medskip

\section{Incorporate vaccinations into the model}\label{sec:full_model}
Vaccines are assumed to guarantee partial protection. 
According to classification in \cite{MATRAJT201517}, we consider two descriptors for the vaccines: reduction in the probability of becoming infected (vaccine efficacy on susceptibility) and reduction in the pathogenicity (vaccine efficacy to prevent or diminish symptoms). Moreover, for simplicity, we model the vaccine efficacy on the population by neglecting the response transition, and we restrict to the case of a single type of vaccine, which is administered in two doses separated by a fixed interval of $\Delta$ days.
We assume that the administration rate of either dose is fixed, equal to $\xi$, so the entire population can be potentially vaccinated (with two doses) after $\mathcal{T}_v$ days. Hence we set $\xi= N/(\mathcal{T}_v- \Delta)$.

Let $\mathsf{VE}^{1}, \mathsf{VE}^{2}$ be the vaccine efficacy on susceptibility after one or two doses, respectively. 
Moreover, we assume that mortality is reduced by a factor $q_{\texttt{post}}$ after a single dose of vaccine.

We assume that $N^{\textrm{novax}}$ people, uniformly distributed over the population, refuse to be vaccinated. Equations \equaref{dyn_sys} describe the evolution of their status. Let $S_{r,p}^{\textrm{novax}}(t)$ be the number of no-vax people in class $(r,p)$ who are still susceptible at time~$t$.

We describe the system dynamics assuming individuals do not return to the susceptible state after being infected or vaccinated. Adding this possibility is not difficult, but we omit it here for brevity. 

The evolution of the vaccinated population requires adding a few more compartments compared to those introduced in sec. \ref{sec:model}:   
Let $V_{r,p}^{\textrm{1m}}(t)$ be the number of people in class $(r,p)$ who have received just the first dose, which is already effective against the virus (i.e., they can no longer be infected). Let $V_{r,p}^{\textrm{1s}}(t)$ be the number of people in class $(r,p)$ still susceptible after receiving just the first dose.
Let $V_{r,p}^{\textrm{2m}}(t)$ be the number of people in class $(r,p)$ who have received both doses and are immune.
At last, let $V_{r,p}^{\textrm{2s}}(t)$ be the number of people in class $(r,p)$ who have received both doses but are still susceptible.
Due to strict prioritization among classes, a given  class $(r,p)$ receives the first dose at full rate $\xi$ 
only within a specific time window: 
$[\mathcal{T}^{\min}_{r,p},\mathcal{T}^{\max}_{r,p}]$. {\color{black}
	The vaccination window for each class is computed based on the class priority:
	$
	\mathcal{T}^{(1), \max}_{r,p}= \inf \{t: 	S_{r,p}(t) =0 \}
	$; 
	$
	\mathcal{T}^{(1),\min}_{r,p}= \max_{ (r',p')\in HP(r,p)}\{  \mathcal{T}^{\max}_{r',p'}\}
	$,
	where $HP(r,p)$ is the set of classes with higher priority than $(r,p)$.
	The vaccination rate for the first dose of vaccine for the population class $(r,p)$ can be written as:
}
\begin{equation*}
	\xi_{r,p}^{(1)}(t)=
	\left\{\begin{array}{ll}
		0  &   t< \mathcal{T}^{\min}_{r,p}\\
		\xi  &    \mathcal{T}^{ \min}_{r,p} \le t < \mathcal{T}^{\max}_{r,p} \\ % \wedge \xi_{r,p}^{(1)}(t-\Delta) = 0 \\
		0  &  t \ge \mathcal{T}^{\max}_{r,p}\\
	\end{array}\right.
\end{equation*}

Let  $V_{r,p}^1(t) = \int_{t-\Delta}^t  \xi _{r,p}^{(1)}(t) \mathrm{d} t $
be the number of people in class $(r,p)$ who have received just the first   dose of vaccine at time $t$.
We assume that the second dose of vaccine is administered at rate:
\[
\xi_{r,p}^{(2)}(t)=\frac{V_{r,p}^{1s} (t)+V_{r,p}^{1m} (t) }{V_{r,p}^1(t)}  \xi_{r,p}^{(1)}(t-\Delta)
\]
only to individuals who have received the first dose and have not been 
infected in the meanwhile.

At last, let:
\[
\widehat S(t)= \sum_{r,p}r(S_{r,p}(t)+
V_{r,p}^{\textrm{1s}}(t)+
V_{r,p}^{\textrm{2s}}(t)+
S_{r,p}^{\textrm{novax}}(t))
\]
be the total number of susceptible edges at time $t$.
Note that
$$\lambda(t) = \frac{\sigma}{\rho(t)} \left( \sum_{r,p} r I_{r,p}(t) \right)
\frac{\widehat{S} (t) }{\mathbb{E}[r]N} $$

Since people who receive at least one dose are less likely to die, we need to keep track of them, hence vaccinated people who get infected traverse a separate chain of compartments $I_{r,p}^v(t), H_{r,p}^v(t), T_{r,p}^v(t)$ with respect to those who do not receive any dose.
The dynamics governing the evolution of $H_{r,p}^v(t), T_{r,p}^v(t)$ are analogous to those in \equaref{dyn_sys} with the only difference that $p_{r,p}^{TD}(t)$ is replaced by $p_{r,p}^{TD}(t)/q_{\texttt{post}}$.

The new vaccination dynamics are \textcolor{black}{represented in Figure \ref{fig:our_model_vax}} and described by the following equations:
\begin{align}
	\dot{S}_{r,p}(t) & =  -\lambda(t) \frac {r S_{r,p}(t)}{\widehat S(t)} 
	- \xi^{(1)}_{r,p} (t) %\textcolor{green}{+\mu M(t)}
	\nonumber  \\
	\dot{I}_{r,p}(t) & = \lambda(t) \frac {r S_{r,p}(t)}{\widehat S(t)} - \gamma I_{r,p}(t)    \nonumber  \\
	\textcolor{black}{\dot H_{r,p}(t) } &= 	\textcolor{black}{\gamma p_{r,p}^{IH}  I_{r,p}(t) - \phi H_{r,p}(t) } \nonumber \\
	\textcolor{black}{\dot T_{r,p}(t)} & = \textcolor{black}{\phi p_{r,p}^{HT} H_{r,p}(t) -\tau T_{r,p}(t)} \nonumber\\
	\dot{I}_{r,p}^v(t) & = \lambda(t) \frac {r (V_{r,p}^{\textrm{1s}}
		+ V_{r,p}^{\textrm{2s}})}{\widehat S(t)} -  \gamma I_{r,p}^v(t)   \nonumber  \\
	\textcolor{black}{\dot H^v_{r,p}(t) } &= 	\textcolor{black}{\gamma p_{r,p}^{IH}  I^v_{r,p}(t) - \phi H^v_{r,p}(t) } \nonumber \\
	\textcolor{black}{\dot T^v_{r,p}(t)} & = \textcolor{black}{\phi p_{r,p}^{HT} H^v_{r,p}(t) -\tau T^v_{r,p}(t)} \nonumber\\
		\textcolor{black}{\dot M_{r,p}(t) }  &= \textcolor{black}{ \gamma  (1-p_{r,p}^{IH}) (I_{r,p}(t) + I^v_{r,p}(t)) }\nonumber\\
	& \,\, \textcolor{black}{+ \phi (1-p_{r,p}^{HT}) (H_{r,p}(t)+H^v_{r,p}(t))} \nonumber \\
	& \,\, \textcolor{black}{ + \tau \left((1-p_{r,p}^{TD}(t)) T_{r,p}(t) + (1 -p_{r,p}^{TD}(t) /
		q_{\scriptscriptstyle post}) T^v_{r,p}(t) \right)
		%\textcolor{green}{-\mu M(t)}
	} \nonumber \\
	\dot{V}_{r,p}^{\textrm{1m}}(t) & = \xi^{(1)}_{r,p} (t) \mathsf{VE}^{1} 
	- \xi^{(2)}_{r,p} (t) \frac{V_{r,p}^{\textrm{1m}}(t)}{V_{r,p}^{1s}(t)+V_{r,p}^{1m} (t)   \nonumber  } \\ 		\dot{V}_{r,p}^{\textrm{1s}}(t)  &=  (1-\mathsf{VE}^{1})\xi^{(1)}_{r,p}
	-\lambda(t) \frac{r V_{r,p}^{\textrm{1s}}(t)}{\widehat S(t)} - \frac{ \xi^{(2)}_{r,p} (t) V_{r,p}^{\textrm{1s}}(t)}{V_{r,p}^{1s}(t)+ V_{r,p}^{1m}(t)}   \nonumber  \\ 
	\dot{V}_{r,p}^{\textrm{2m}}(t)  &= \xi^{(2)}_{r,p} \frac{V_{r,p}^{\textrm{1m}}(t) + 
		\frac{\mathsf{VE}^2- \mathsf{VE}^1}{1-\mathsf{VE}^1 }	V_{r,p}^{\textrm{1s}}(t)}{ V_{r,p}^{1s}(t)+ V_{r,p}^{1m}(t)}   \nonumber  \\
	\dot{V}_{r,p}^{\textrm{2s}}(t)  &= \xi^{(2)}_{r,p} \frac{1-\mathsf{VE}^2}{ 1-\mathsf{VE}^1} \frac{V_{r,p}^{\textrm{1s}}(t)}{V_{r,p}^{1s}(t)+V_{r,p}^{1m}(t) } 
	- \lambda(t) \frac{r V_{r,p}^{\textrm{2s}}(t)}{\widehat S(t)} \nonumber\\
	\textcolor{black}{\dot D_{r,p}(t)} &= \textcolor{black}{\tau p_{r,p}^{TD}(t) \left(T_{r,p}(t) + \frac{T^v_{r,p}(t)}{q_{\scriptscriptstyle post}} + T^{\text{no-vax}}_{r,p}(t)\right) }
	\label{eq:dyn_sys2}
\end{align}

{\color{black}
	\section{Additional proofs}
	Here, we present the proofs of Corollary 2 and 3, for completeness we report again the statements.
	
	In these following corollaries we explore two interesting cases of delay distributions. 
	\begin{corollary}[Exponential delay distribution]
		If $\mathfrak{f}_d(\tau) = u(\tau) \delta e^{-\delta (\tau)}$, then 
		the system is always (locally) stable. 
	\end{corollary}
	\begin{proof}
		Consider an exponential delay distribution of parameter $\delta$ (mean $1/\delta$).
		We obtain
		\begin{equation}\label{eq:es1}
			\mathcal{L}\{ \eta(t) \}  = \frac{\eta(0)}{s + \frac{\gamma \delta}{s + \delta}}
		\end{equation} 
		The poles of \equaref{es1} are the roots of the second-order equation  
		$s^2 + s \delta +\gamma \delta$. Since the the real part of both roots is negative
		for any $\delta$, the system is always stable. 
		For $\frac{1}{\delta} > \frac{1}{4 \gamma}$ it exhibits
		dumped oscillations, otherwise it exhibits an exponential decay.
	\end{proof}
	
	\begin{corollary}[Shifted exponential delay distribution]
		Let $\mathfrak{f}_d(\tau) = u(\tau-d) \delta e^{-\delta (\tau-d)}$. 
		For any given $\delta > 0$, there exists a critical delay
		$d^* = \frac{1}{\gamma} f(\delta)$, such that the system
		is (locally) stable if $d < d^*$, otherwise the system is unstable.
		As $\delta$ grows from 0 to $\infty$, $d^*$ grows
		from $1/\gamma$ to $\pi/(2 \gamma)$.
	\end{corollary}
	\begin{proof}
		Consider the shifted exponential distribution:
		$\mathfrak{f}_d(\tau) = u(\tau-d) \delta e^{-\delta (\tau-d)}$.
		In this case we have
		\begin{equation}\label{eq:es2}
			\mathcal{L}\{ \eta(t) \}  = \frac{\eta(0)}{s + \frac{\gamma \delta}{s + \delta}e^{-sd}}
		\end{equation} 
		whose poles $z = b + i \omega$ satisfy the equations:
		\begin{eqnarray}\label{eq:bomegashift}
			\begin{cases}
				b^2 - \omega^2 + b \delta + \gamma \delta e^{-b d} \cos(\omega d)  = 0\\
				2 b \omega + \omega \delta - \gamma \delta e^{-b d} \sin(\omega d) = 0 
			\end{cases}\,.
		\end{eqnarray}   
		At the critical point, $b = 0$, and above equations reduce to:
		\begin{eqnarray}\label{eq:redshift}
			\begin{cases}
				\omega^2 = \gamma \delta \cos(\omega d) \\
				\omega = \gamma \sin(\omega d) 
			\end{cases}\,.
		\end{eqnarray}   
		For given $\delta$, we can solve the above two equations in the unknowns $\omega^{\star}$, $d^{\star}$, obtaining:
		\begin{eqnarray*}\label{eq:star}
			\omega^{\star} = \sqrt{\frac{-\delta^2+\sqrt{\delta^4+4\gamma^2\delta^2}}{2}} ,\quad d^{\star} = \frac{\arcsin(\frac{\omega}{\gamma})}{\omega}.
		\end{eqnarray*}   
		
	\end{proof}

\begin{figure}[h!]\centering
	\includegraphics[width=0.7\columnwidth]{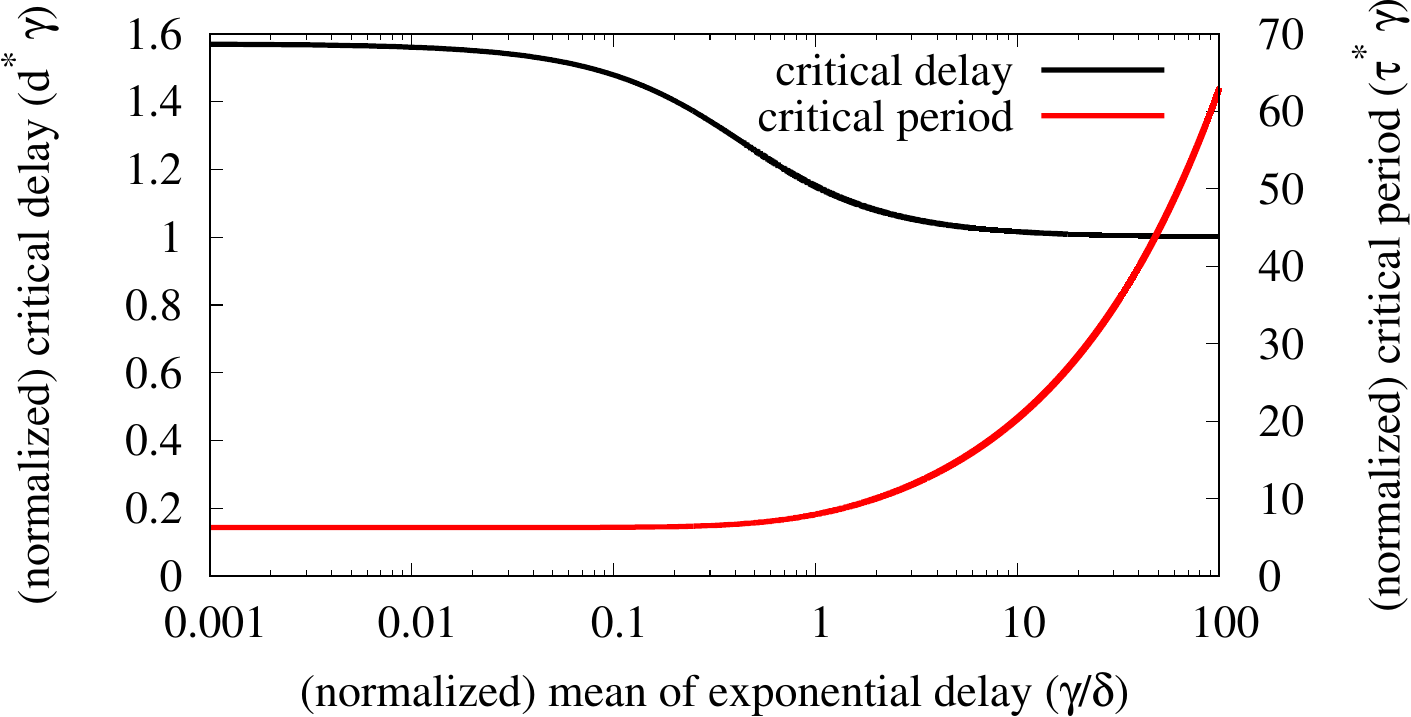}
	\caption{\textcolor{black}{Critical values $d^{\star}$ (left y axes)  and $\tau^{\star}$ (right y axes) as function of $1/\delta$. All quantities are normalized by $1/\gamma$.}}
	\label{fig:expdelay}
\end{figure}  
	
	Fig. \ref{fig:expdelay} shows the critical values $d^{\star}$ and $\tau^{\star} = 2 \pi/\omega^{\star}$ as function of $1/\delta$  (in the plot all quantities are normalized by the average sojourn time $1/\gamma$ in the infectious state).

Interestingly, the critical value $d^{\star}$ of the shift decreases as we increase the average  $1/\delta$ of the exponential distribution.
For example, with $\delta = \gamma$ we have $d^{\star} \gamma \approx 1.15$, in constrast to $d^{\star} \approx 1.57$ with deterministic delay.  
}

\begin{figure*}[h!]
	\centering
	\includegraphics[width=0.9\textwidth]{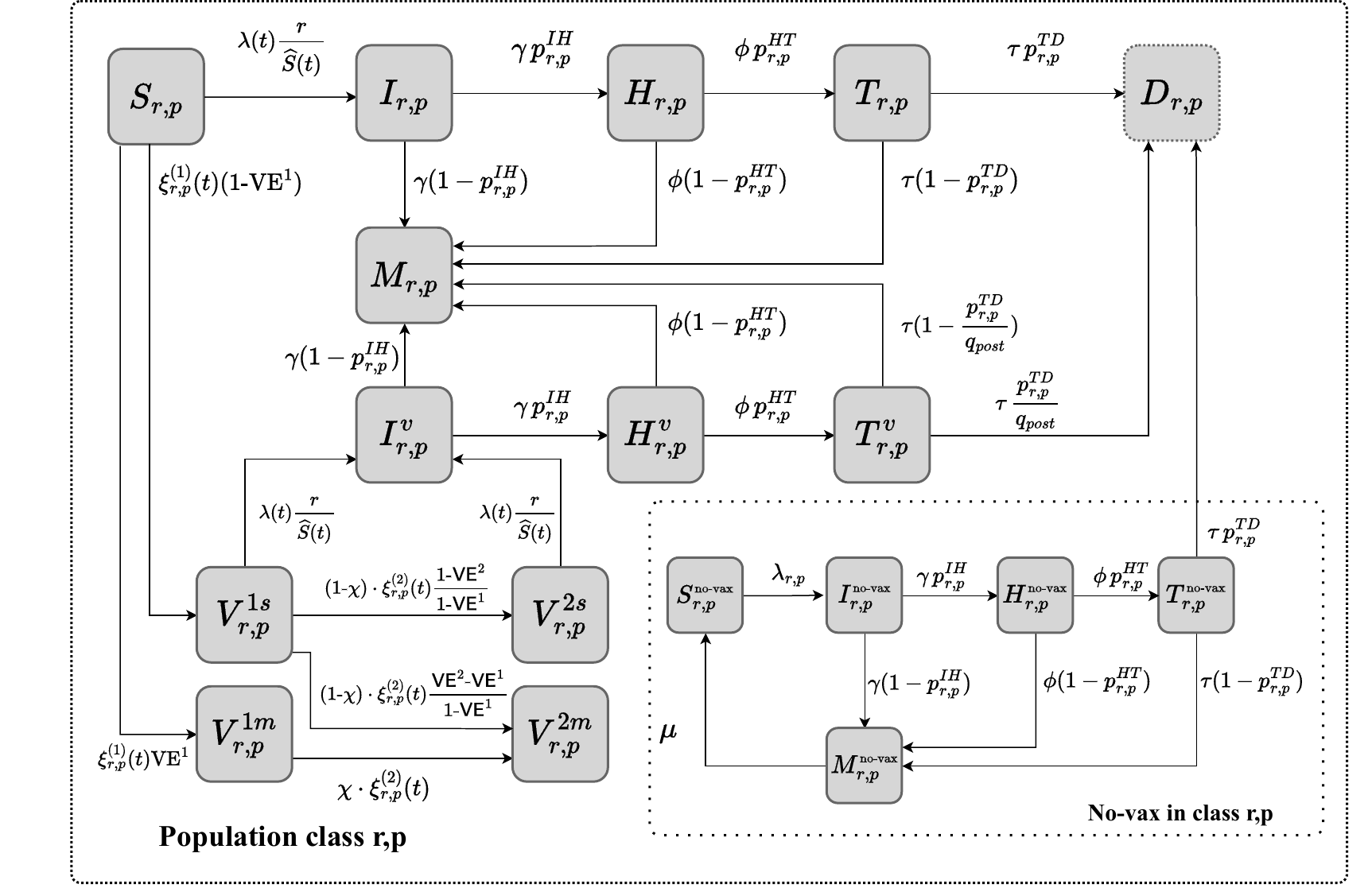}
	\caption{\textcolor{black}{Schematic representation of the proposed model with vaccinations, corresponding to the system of equations (\ref{eq:dyn_sys2}). For ease of representation we showed the dynamics of only one class $(r,p)$ we remark that the dynamics of different classes are intertwined both through the infection rate, as depicted in Figure \ref{fig:our_model_sm}, and the process of vaccinations prioritization. For compactness, we defined $\chi := \frac{V_{r,p}^{\textrm{1m}}(t)}{V_{r,p}^{1s}(t)+V_{r,p}^{1m} (t)}$.}}
	\label{fig:our_model_vax}
\end{figure*}
	
	\bibliographystyle{IEEEtran}
	\bibliography{ref}
	
	\begin{IEEEbiography}[{\includegraphics[width=1in,height=1.25in,clip,keepaspectratio]{./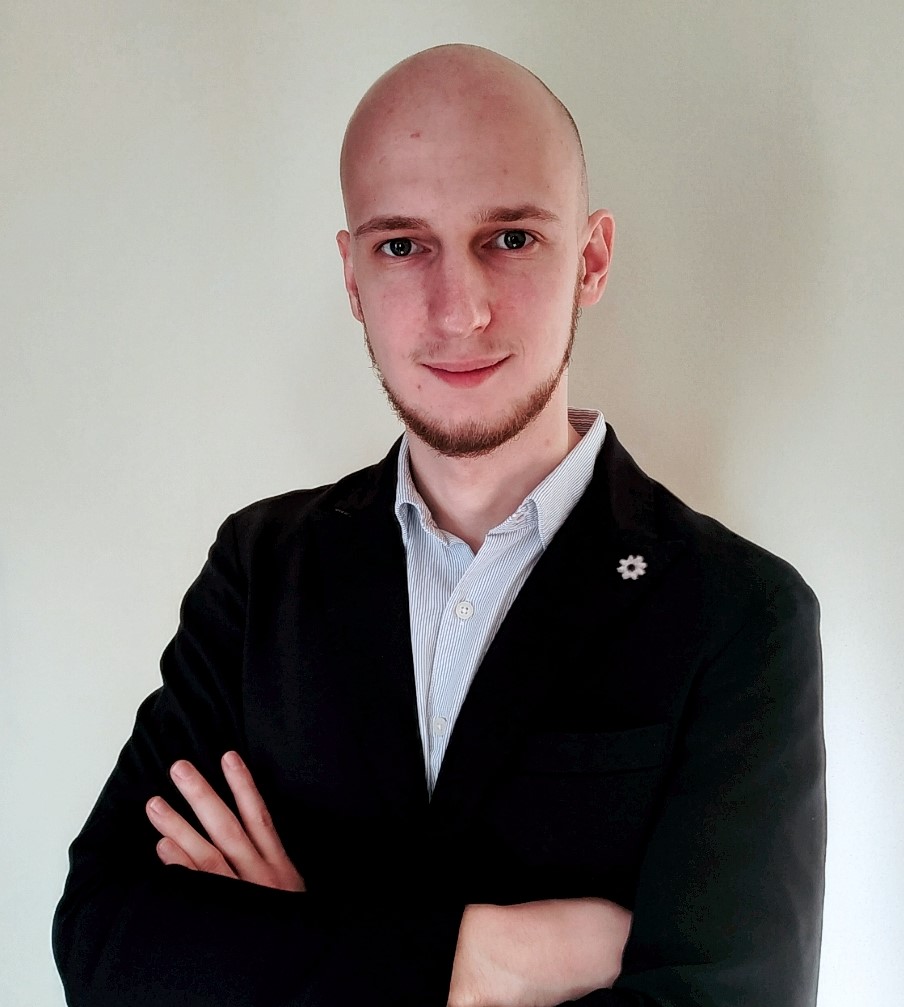}}]{Franco Galante} received the Bachelor Degree from Università degli Studi di Padova in 2018, and the Master Degree from Politecnico di Torino in Communications and Computer Network Engineering in 2020. In 2019 he spent a semester at TU Delft. He is currently a Ph.D. student at Politecnico di Torino within the department of Electronics and Telecommunications. His research interests include dynamics over networks, randomized algorithms and control of networks.
	\end{IEEEbiography}
	
	\vspace{-1cm}
	
	\begin{IEEEbiography}[{\includegraphics[width=1in,height=1.25in,clip,keepaspectratio]{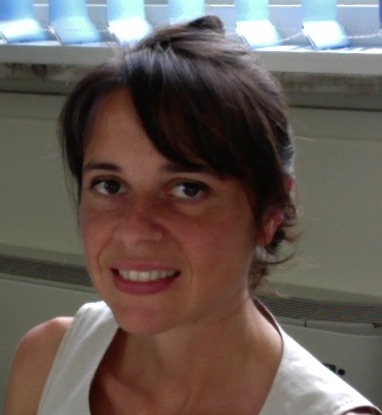}}]{Chiara Ravazzi}
		(M'13) is currently a Tenured Researcher with the CNR-IEIIT and Adjunct Professor at Politecnico di Torino. She received the Ph.D. degree in Mathematics for Engineering Sciences from Politecnico di Torino, in 2011. In 2010, she was a visiting member at the Massachusetts Institute of Technology, Cambridge (LIDS) and, she held Postdoctoral positions with Politecnico di Torino (2011-2016). She has been serving as AE of the IEEE Transactions on Signal Processing since 2019 and AE of the IEEE Control Systems Letters since 2020. Her research interests include signal processing, optimization, and control of network systems.
	\end{IEEEbiography}
	
	\vspace{-1cm}
	
	\begin{IEEEbiography}[{\includegraphics[width=1in,height=1.25in,clip,keepaspectratio]{./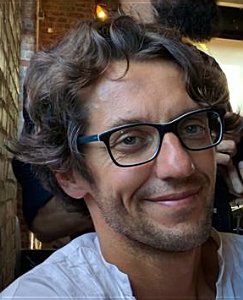}}]{Michele Garetto} (M'04) received the Dr.Ing. degree in Telecommunication Engineering and the Ph.D. degree in Electronic and
		Telecommunication Engineering, both from Politecnico di Torino,
		Italy, in 2000 and 2004, respectively. In 2002, he was a visiting
		scholar with the Networks Group of the University of Massachusetts,
		Amherst, and in 2004 he held a postdoctoral position at the ECE
		department of Rice University, Houston. He is currently associate
		professor at the Computer Science Department of University of Torino, Italy. 
	\end{IEEEbiography}
	\vspace{-1cm}
	
	\begin{IEEEbiography}[ {\includegraphics[width=1.1in,height=1.35in,clip,angle=-90,keepaspectratio]{./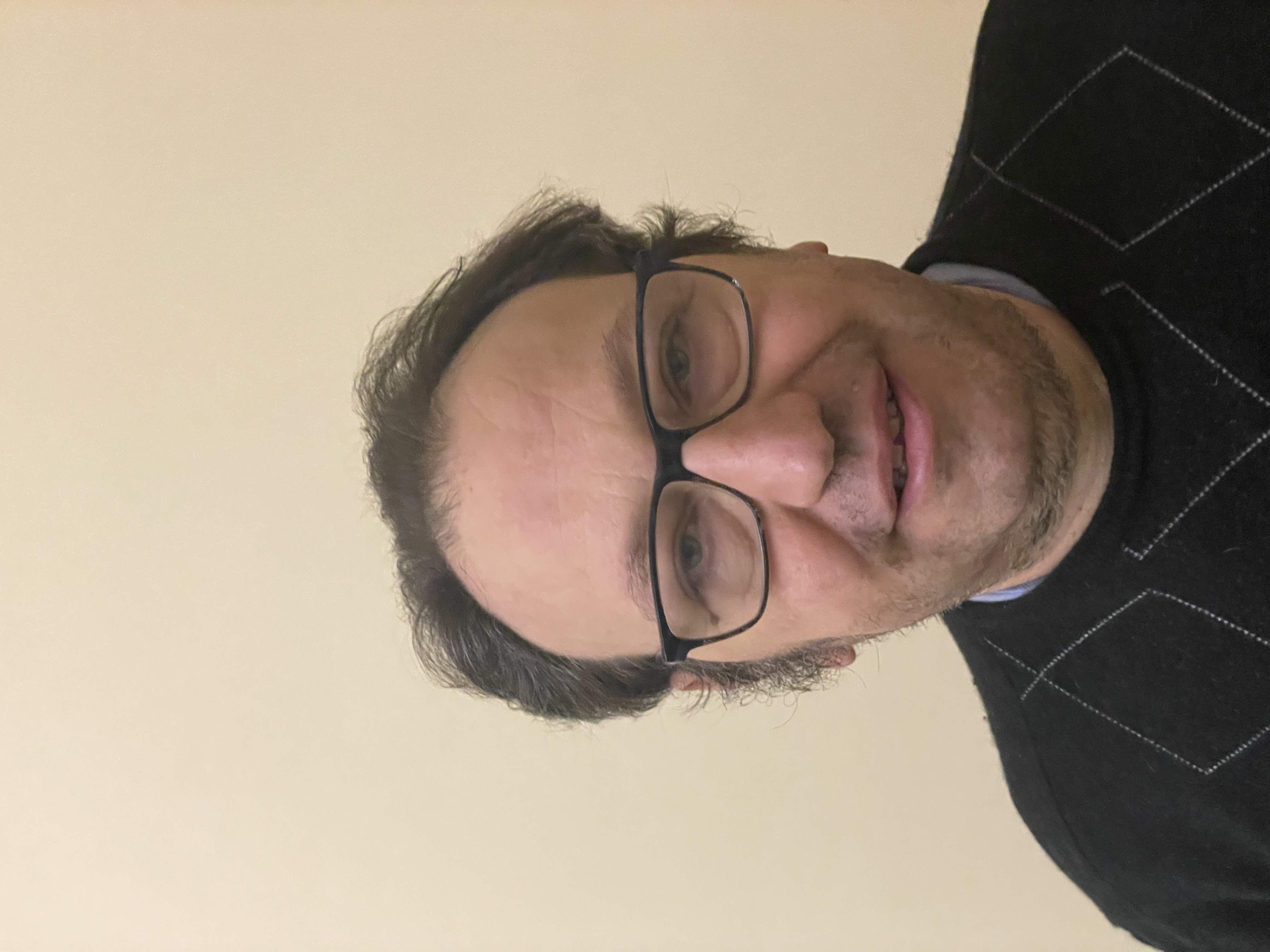} } ]{Emilio Leonardi} is a professor with the Department of Electronics and Telecommunications, Politecnico di Torino.  
		He visited:  UCLA, CS dept. in 1995,   Lucent Bell-labs (Holmdel) in 1999,  Stanford EE dept.  in 2001,	Sprint Labs  (Burlingame) in 2003, NEC Labs (Heidelberg)  in 2012, INRIA (Sophia Antipolis)  in 2016.
		His research interests include performance evaluation of computer networks and distributed systems, dynamics over networks, and human centric computation.
	\end{IEEEbiography}

\end{document}